\numberwithin{equation}{section}
\newtheorem{theorem}{Theorem}[section]
\newtheorem{proposition}[theorem]{Proposition}
\newtheorem{corollary}[theorem]{Corollary}
\newtheorem{lemma}[theorem]{Lemma}
\theoremstyle{definition}
\theoremstyle{remark}
\renewcommand{\epsilon}{\varepsilon }
\newcommand{\R}{\mathbb{R}}
\begin{document}
\title[Multiple Boundary Peak Solution for Critical Elliptic Neumann System]%
{Multiple  Boundary  Peak Solution for Critical Elliptic System with Neumann boundary }

\author{  Yuxia Guo, Shengyu Wu and Tingfeng Yuan}

\address{  Department of Mathematical  Science, Tsinghua University, Beijing 100084, P.R.China}
\email{yguo@tsinghua.edu.cn}

\address{  Department  of Mathematical Science, Tsinghua University, Beijing 100084, P.R.China}
\email{wusy21@mails.tsinghua.edu.cn}

\address{  Department  of Mathematical Science, Tsinghua University, Beijing 100084, P.R.China}
\email{ytf22@mails.tsinghua.edu.cn}




\subjclass{Primary 35B33}

\keywords { Critical elliptic systems, boundary multi-peak solutions, Neuman boundary}
\let\thefootnote\relax\footnotetext{ Guo is supported by 2023YFA1010002, NSFC (No. 12271283)}

\date{}


\begin{abstract}
We consider the following elliptic system with Neumann boundary:
\begin{equation*}
   \begin{cases}
   -\Delta u + \mu u=v^p,\;\;\; &\hbox{in } \Omega,\\
   -\Delta v + \mu v=u^q,\;\;\; &\hbox{in } \Omega,\\
   \frac{\partial u}{\partial n} = \frac{\partial v}{\partial n} = 0, &\hbox{on } \partial\Omega,\\
   u>0,v>0, &\hbox{in } \Omega,
   \end{cases}
\end{equation*}
where $\Omega \subset \R^N$ is a smooth bounded domain, $\mu$ is a positive  constant and $(p,q)$ lies in the critical hyperbola:
$$
\dfrac{1}{p+1} + \dfrac{1}{q+1}  =\dfrac{N-2}{N}.
$$
By using the Lyapunov-Schmidt reduction technique, we establish the existence of infinitely many solutions to above system. These solutions have multiple peaks that are located on the boundary $\partial \Omega$. Our results show that the geometry of the boundary $\partial\Omega,$ especially its mean curvature, plays a crucial role on the existence and the behaviour of the solutions to the problem.
\end{abstract}

\maketitle


\section{Introduction}

In this paper, we are concerned with the following  nonlinear elliptic system with Neumann boundary condition:
\begin{equation}\label{equ-1}
\begin{cases}
-\Delta u + \mu u=v^p,\;\;\; &\hbox{in } \Omega,\\
-\Delta v + \mu v=u^q,\;\;\; &\hbox{in } \Omega,\\
\frac{\partial u}{\partial n} = \frac{\partial v}{\partial n} = 0, &\hbox{on } \partial\Omega,\\
u>0,v>0, &\hbox{in } \Omega,
\end{cases}
\end{equation}
where $\Omega$ is a smooth bounded domain in $\R^N$, $\mu > 0$ is a  constant and $(p,q)$ is a pair of positive numbers lying on the critical hyperbola:
\begin{equation}\label{cri}
 \frac{1}{p+1} +\frac{1}{q+1} =\frac{N-2}{N}.
\end{equation}
Without loss of generality, we may assume that $p\leq \frac{N+2}{N-2}\leq q.$

System (\ref{equ-1}) appears in various models of applied science. An instance of its presence can be observed in the chemotactic aggregation model introduced by Keller and Segel, see \cite{LinLiTakagi, NiTakagishapeof}. Additionally, it exhibits a close association with the Gierer-Meinhardt system, which was originally formulated to investigate the phenomenon of biological pattern formation, see \cite{NiTakagipoint, WeiJDE97} and references therein.

If $u=v$, the system (\ref{equ-1}) is reduced to a single elliptic Neumann equation:
\begin{equation}\label{reduce-equ-1}
   \begin{cases}
   -\Delta u + \mu u=u^{p},\;\;\; &\hbox{in } \Omega,\\
   \frac{\partial u}{\partial n}  = 0, &\hbox{on } \partial\Omega,\\
   u>0, &\hbox{in } \Omega,
   \end{cases}
\end{equation}
where $ p = \frac{N+2}{N-2}$.

This type of equation has been extensively studied in recent years. It is shown  that the geometry of the boundary $\partial \Omega$, especially its mean curvature, plays a crucial role on the existence and behaviour of solution to (\ref{reduce-equ-1}). For example,  in subcritical case with $p < \frac{N+2}{N-2}$, Lin, Ni and Takagi proved that equation (\ref{reduce-equ-1}) has only trivial solution for small $\mu$ while nonconstant solution exists for large $\mu$, see \cite{LinLiTakagi}. Subsequently, Ni and Takagi proved in \cite{NiTakagishapeof, NiTakagipoint} that the least energy solution attains its maximum at a single point on $\partial \Omega$ for each $\mu$. Moreover, as $\mu$ tends to $ + \infty$, these maximal points tend to a boundary point that maximizes the mean curvature $H(x)$ of the boundary $\partial \Omega$. Additionally, high energy solutions have been studied. As a result, one or multiple peak solutions are constructed. These solutions exhibits different blow-up behavior, including blow-up at one or several points on $\partial \Omega$ \cite{YanPacific, topological-nontrivial, LiYYC1stable, WeiJDE97} or in the interior of $\Omega$ \cite{Interiorpeak, multi-peak-interior-subcritical}, and blow up at multiple points both on $\partial \Omega$ and in the interior of $\Omega$ \cite{interior-boundary-sub}. For more any other related results, we refer the readers to  \cite{role-meancurvature, Geometry-boundary, topological-nontrivial, critical-concentrate, WeiJDE97} and references therein.

In the critical case $p =\frac{N+2}{N-2},$ the investigation of boundary peak solutions, including both single or multiple peaks, was explored in \cite{role-meancurvature, GuiCriticalmulti, WangZQCritical, WangZQconstruction, Yan07arbitrary-kpeak}. Moreover, the singular behavior of specific solutions were studied in \cite{Criticaloneboundarypeak} and \cite{NiPanTakagi}, where the authors focused on single boundary peak solution and the least energy solution respectively. Additionally, equation (\ref{reduce-equ-1}) admits the phenomenon of bubble accumulation, where multiple bubble concentrate at the same point on the boundary $\partial \Omega$. This  phenomenon is discussed in \cite{Bubble-accumulation}.

If we consider the equation (\ref{reduce-equ-1}) with supercritical exponent, that is $p > \frac{N+2}{N-2}$, then Lin, Ni found in \cite{LinLiconjuecture} that equation (\ref{reduce-equ-1}) only possesses trivial solution for sufficiently small $\mu$, while nonconstant solution exists for large $\mu$ if $\Omega$ is a ball, similar to the subcritical case ($p<\frac{N+2}{N-2}$) discussed in \cite{LinLiTakagi}. Based on this fact, they proposed the Lin-Ni conjecture, which states that for a smooth and bounded domain $\Omega$, and $p > 1$, equation (\ref{reduce-equ-1}) has only trivial solution for small $\mu$, whereas nontrivial solutions exist for large $\mu$. To our best knowledge, Rey and Wei are the first to  provide a negative answer to the Lin-Ni conjecture in \cite{critical-interior} for $N \geq 5$ and  $p = \frac{N+2}{N-2}.$ Subsequently, Wang, Wei, and Yan disproved this conjecture in \cite{Neumann-Linli} by considering a non-convex domain $\Omega$ that satisfies certain geometric assumptions. To be more precise, they assumed the following three conditions on the domain $\Omega$:

\hspace*{\fill}

\textbf{(H1)}: If $y = (y_1, y_2,\cdots,y_n) \in \Omega$, then $y = (y_1, y_2,\cdots, -y_i,\cdots, y_n) \in \Omega$ for $i=3,4,\cdots,N$.

\textbf{(H2)}: If $(r , 0 , y'' ) \in \Omega$, then $(rcos \theta, r sin \theta, y'') \in \Omega$ for any $\theta \in (0,2\pi)$, where $y'' \in \R^{N-2}$.

\textbf{(H3)}: Let $ T = \partial \Omega \cap \{ y_3 = y_4 =\cdots, = y_N = 0 \}$. There is a connected component $\Gamma$ of $T$ such that the mean curvature $H(x)$ on $\Gamma$ is a negative constant $\gamma$.

\hspace*{\fill}

Under these assumptions, they proved that:

\hspace*{\fill}

\textbf{Theorem A}: Suppose $N\geq 3$, $\Omega$ is a smooth bounded domain which satisfies (H1), (H2) and (H3) and $\mu>0$ is a fixed constant. Then equations (\ref{reduce-equ-1}) has infinitely many positive solutions whose energy can be arbitrarily large.

\hspace*{\fill}

Theorem  A implies falsehood of Lin-Ni's conjecture for some specified domain $\Omega$. We would like to point out that, in a subsequent work \cite{Lin-Liconvex}, Wang, Wei and Yan extended their findings by removing the assumption of nonconvexity on $\Omega$ and disproved Lin-Li's conjecture for $N\geq 4.$ For more results, we refer the interested readers to \cite{LinLisuper, DelPino-LinNi, Druet, Yadava1, Yadava2, Yadava3}. These references provide further insights and discussions on the topic.

The aim of this paper is to generalize the results obtained in Theorem A to system \eqref{equ-1}. Among any other things, we obtain the existence of infinitely many boundary  peak solution.  We would like to point out that, due to the lack of the compactness of embedding and the trading off between $u$ and $v$, it accounts for a much rather structure and a rather different characterization of the critical growth. It is not impossible to apply critical point theory directly to obtain the existence of multiple solutions.

Before the statement of the main result, we introduce some notations. Firstly, we make the following transformation: $u(y) \mapsto \varepsilon^{-\frac{N}{q+1}} u \left(  \frac{y}{\varepsilon} \right)$ and $v(y) \mapsto \varepsilon^{-\frac{N}{p+1}} v \left(  \frac{y}{\varepsilon} \right)$, then \eqref{equ-1} becomes:
\begin{equation}\label{equ-2}
\begin{cases}
-\Delta u + \mu \varepsilon^2 u=v^p,\;\;\; &\hbox{in } \Omega_\varepsilon,\\
-\Delta v + \mu \varepsilon^2 v=u^q,\;\;\; &\hbox{in } \Omega_\varepsilon,\\
\frac{\partial u}{\partial n} = \frac{\partial v}{\partial n} = 0, &\hbox{on } \partial\Omega_\varepsilon,\\
u>0,v>0, &\hbox{in } \Omega_\varepsilon,
\end{cases}
\end{equation}
where $\Omega_{\epsilon} = \{ y| \epsilon y \in \Omega  \}$. Note that formally, if we let $\epsilon \rightarrow 0$, then we get the  "limit" problem corresponding to  (\ref{equ-1}), namely the following system in $\mathbb{R}^N$:
\begin{equation}\label{4}
\begin{cases}
-\Delta U =|V|^{p-1}V,\;\;\; \hbox{in } \mathbb R^N,\\
-\Delta V =|U|^{q-1}U,\;\;\; \hbox{in } \mathbb R^N,\\
(U,V)\in \dot{W}^{2,\frac{p+1}{p}}(\mathbb R^N) \times \dot{W}^{2,\frac{q+1}{q}}(\mathbb R^N),
\end{cases}
\end{equation}
where $N\geq 3$ and $(p,q)$ satisfy \eqref{cri}. It is shown in \cite{ref1} that (\ref{4}) poccesses a positive ground state $(U,V)$. By Sobolev embeddings, there holds that

\begin{equation}\label{5}
\begin{cases}
\dot{W}^{2,\frac{p+1}{p}}(\mathbb R^N) \hookrightarrow \dot{W}^{1,p^*}(\mathbb R^N)  \hookrightarrow L^{q+1}(\mathbb R^N),\\
\dot{W}^{2,\frac{q+1}{q}}(\mathbb R^N) \hookrightarrow \dot{W}^{1,q^*}(\mathbb R^N)  \hookrightarrow L^{p+1}(\mathbb R^N),
\end{cases}
\end{equation}
with
$$
\frac{1}{p^*}=\frac{p}{p+1}-\frac{1}{N}=\frac{1}{q+1}+\frac{1}{N},\;\; \frac{1}{q^*}=\frac{q}{q+1}-\frac{1}{N}=\frac{1}{p+1}+\frac{1}{N},
$$
and so the following energy functional is well-defined in $ \dot{W}^{2,\frac{p+1}{p}}(\mathbb R^N) \times \dot{W}^{2,\frac{q+1}{q}}(\mathbb R^N) $:
$$
I_0(u,v) := \int_{\mathbb R^N} \nabla u \cdot \nabla v -\frac{1}{p+1} \int_{\mathbb R^N} |v|^{p+1} -\frac{1}{q+1} \int_{\mathbb R^N} |u|^{q+1}.
$$
According to \cite{ref2}, the ground state is radially symmetric and decreasing up to a suitable translation. Thanks to \cite{ref3} and \cite{ref4}, the positive ground state $(U_{1,0},V_{1,0})$ of \eqref{4} is unique with $U_{1,0}(0)=1$ and the family of functions
\begin{equation}\label{6}
(U_{\lambda,x}(y),V_{\lambda,x}(y)) = (\lambda^{\frac{N}{q+1}} U_{1,0} (\lambda(y-x)), \; \lambda^{\frac{N}{p+1}} V_{1,0}(\lambda(y-x))    )
\end{equation}
for any $\lambda >0, x\in \mathbb R^N$ also solves system \eqref{4}. We should point out that the sharp asymptotic behavior of the ground states to \eqref{4} (see \cite{ref3}) and the non-degeneracy for \eqref{4} at each ground state (see \cite{ref5}) play an important role to construct bubbling solutions especially using Lyapunov-Schmidt reduction methods.

On the other hand, We would like to point out that from condition (H2), the component $\Gamma$ in conditin (H3) is a circle. Without loss of generality, we can assume $\Gamma = \{ (y_1, y_2, 0): y_1^2 + y_2^2 =1 \}$, where $0$ is the zero vector in $\mathbb R^{N-2}$. Define
\[
\begin{split}
H_s = \{ & (u,v) : u, v\in H^1(\Omega_\varepsilon),\; u,v \hbox{ is even in } y_h,\; h = 2,\cdots, N, \\
& u(r\cos \theta , r\sin \theta, y'') = u\left(r\cos \left( \theta + \frac{2\pi j}{k}  \right), r\sin \left( \theta + \frac{2\pi j}{k}  \right) , y''  \right), \\
& v(r\cos \theta , r\sin \theta, y'') = v\left(r\cos \left( \theta + \frac{2\pi j}{k}  \right), r\sin \left( \theta + \frac{2\pi j}{k}  \right) , y''  \right), \\
& \hbox{for } j=1,\cdots, k-1 \},
\end{split}
\]
and
$$
x_j = \left( \frac{1}{\varepsilon} \cos \frac{2(j-1)\pi}{k}, \frac{1}{\varepsilon} \sin \frac{2(j-1)\pi}{k}, 0 \right), j = 1,\cdots, k,
$$
where $0$ is the zero vector in $\mathbb R^{N-2}$ and we let $\epsilon = k^{-\frac{N-2}{N-3}}$. Then it follows imediately that $x_j \in \partial \Omega_{\varepsilon}$.

Let $(PU_{\Lambda, x_j}, PV_{\Lambda, x_j})$  be the unique solution of

\begin{equation}\label{equ-3}
\begin{cases}
-\Delta u + \mu \varepsilon^2 u=V_{\frac{1}{\Lambda},x_j}^p,\;\;\; &\hbox{in } \Omega_\varepsilon,\\
-\Delta v + \mu \varepsilon^2 v=U_{\frac{1}{\Lambda},x_j}^q,\;\;\; &\hbox{in } \Omega_\varepsilon,\\
\frac{\partial u}{\partial n} = \frac{\partial v}{\partial n} = 0, &\hbox{on } \partial\Omega_\varepsilon.
\end{cases}
\end{equation}
Here, we assume that $\Lambda \in (\delta, \delta^{-1})$, where $\delta > 0$ is a small constant. Set
$$
(PU,PV)=\left( \sum\limits_{j=1}^k PU_{\Lambda, x_j}, \sum\limits_{j=1}^k PV_{\Lambda, x_j} \right).
$$

In order to use $(PU,PV)$ as the approximate solution to problem (1.1), we need to further restrict the index $p$. Suppose $N\geq 5$ and $p$ satisfies the following condition

\hspace*{\fill}

\textbf{(A)}:If $N=5$, then $p \in (2,\frac{7}{3}]$. If $N\geq 6$, then $p \in (\frac{N+\tau}{N-2}, \frac{N+2}{N-2}]$, where $\tau = \frac{N-3}{N-2}$.

\hspace*{\fill}

Our main results of the paper is:

\begin{theorem}\label{t1}
Suppose $N \geq 5$ and $p$ satisfies condition (A). Besides, $\Omega$ is a smooth bounded domain satisfying (H1), (H2), (H3), and $\mu$ is a fixed positive constant. Then there exists $k_0>0$, such that for any $k > k_0$, problem \eqref{equ-2} admits a solution with the following form:
\[
   (u,v) = (PU + \omega_1 , PV + \omega_2)
\]
where $\omega_1$ and $\omega_2$ are error term that satisfies Neumann condition.
\end{theorem}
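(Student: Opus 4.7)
The plan is to follow the standard Lyapunov--Schmidt reduction scheme, adapted to the Neumann system \eqref{equ-2}. First I would fix the scale parameter $\Lambda \in (\delta,\delta^{-1})$ and work in the symmetric space $H_s$ so that the rotational/reflection symmetries of the ansatz $(PU,PV)$ are automatically preserved. This forces the $k$ concentration points $x_j$ to move coherently along $\Gamma$ and collapses the naive $(N+1)k$--dimensional kernel of the linearized operator at $(PU,PV)$ down to essentially one direction, generated by differentiation in $\Lambda$. The nondegeneracy of the ground state $(U_{1,0},V_{1,0})$ of \eqref{4} from \cite{ref5} guarantees, after projecting via \eqref{equ-3}, that the kernel of the unperturbed linearization in $H_s$ is exactly $\mathrm{span}\{(\partial_\Lambda PU,\partial_\Lambda PV)\}$ up to an error produced by the Neumann projection, which decays because the bubbles are concentrated at a distance $O(\ve^{-1})$ from $\partial\Omega_\ve$ tangentially but at $O(1)$ of the boundary normally.

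Next I would introduce a weighted $L^\infty$ pair norm $\|\cdot\|_*$ of the Wei--Yan type, controlling $|\omega_1(y)|\le \|\omega\|_*\sum_j(1+|y-x_j|)^{-N/(q+1)-\tau}$ and similarly for $\omega_2$ with exponent $N/(p+1)+\tau$; these weights are compatible with the Sobolev embeddings \eqref{5} and with the asymptotic decay of $U_{1,0},V_{1,0}$ from \cite{ref3}. The linear theory then amounts to proving that, on the subspace of $H_s$ orthogonal (in a suitable dual pairing) to $(\partial_\Lambda PU,\partial_\Lambda PV)$, the linearization is invertible with operator norm bounded uniformly in $k$. I would prove this by a standard contradiction/blow-up argument: a hypothetical sequence of normalized approximate kernel elements must, after rescaling around some $x_j$, converge to a nontrivial element of the kernel of the limit linearization, contradicting nondegeneracy once the symmetry has excluded the translation and dilation modes. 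Condition (A) enters here to ensure $p-1$ is large enough that the Taylor remainder $(PV+\omega_2)^p-PV^p-pPV^{p-1}\omega_2$ is controllable in the $\|\cdot\|_*$ norm.

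With the linear theory in hand I would solve the projected nonlinear problem by a contraction mapping, obtaining for each admissible $\Lambda$ a unique solution $(\omega_1(\Lambda),\omega_2(\Lambda))$ with $\|\omega\|_*=O(\ve^\sigma)$ for some $\sigma>0$, depending smoothly on $\Lambda$. The remaining finite--dimensional reduction is to locate a critical point of
\[
F_k(\Lambda):=I_\ve\bigl(PU+\omega_1(\Lambda),\,PV+\omega_2(\Lambda)\bigr)
\]
in $(\delta,\delta^{-1})$. A careful expansion, using the sharp decay of the ground state and a boundary flattening near $\Gamma$, should give
\[
F_k(\Lambda)=k\Bigl(A_0+A_1\gamma\,\Lambda^{-1}\ve+A_2\Lambda^{N-2}\sum_{j=2}^{k}|x_1-x_j|^{-(N-2)}+o(\ve)\Bigr),
\]
with explicit constants $A_0,A_1,A_2>0$ and $\gamma<0$ from (H3). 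The choice $\ve=k^{-(N-2)/(N-3)}$ is made precisely so the boundary term and the bubble--bubble interaction term are of the same order $k\,\ve$, and their competition (one decreasing, one increasing in $\Lambda$) creates an interior critical point $\Lambda_k\in(\delta,\delta^{-1})$; a $C^1$--stability argument then upgrades this to a critical point of the full $F_k$, giving the desired solution.

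The main obstacle I anticipate is the uniform--in--$k$ linear theory, for two intertwined reasons. First, the system setting breaks the standard Hilbert--space framework and forces the use of the dual pair $\dot W^{2,(p+1)/p}\times\dot W^{2,(q+1)/q}$, in which $U_{1,0}$ and $V_{1,0}$ have different decay rates; tuning the weights so that both the error estimate and the interaction expansion close is the delicate point, and is exactly what condition (A) is designed to make possible, by ensuring $p-1>\tau/(N-2)$ in the Hölder estimate for $v\mapsto v^p$. Second, verifying that the bubble interactions dominate the error $\|\omega\|_*$ (rather than being lost in it) requires pushing the expansion of $F_k$ one order beyond the naive one, which is where the precise value $\ve=k^{-(N-2)/(N-3)}$ and the restriction $N\geq 5$ are used.
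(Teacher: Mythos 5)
Your proposal follows the same Lyapunov--Schmidt reduction scheme as the paper: symmetric function spaces collapsing the kernel to the dilation direction, weighted $L^\infty$ norms of Wei--Yan type, invertibility of the projected linearization by a contradiction/blow-up argument using the non-degeneracy from \cite{ref5}, a contraction mapping to solve the auxiliary equation, and then a one-dimensional variational argument in $\Lambda$ that exploits the competition between the boundary curvature term and the bubble--bubble interaction, with $\varepsilon=k^{-(N-2)/(N-3)}$ chosen so they balance. This matches the paper's Sections~2--4.

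However, there are two concrete errors in your expansion of $F_k(\Lambda)$ that would derail the final step if carried through literally. First, the curvature correction scales like $\Lambda\varepsilon$, not $\Lambda^{-1}\varepsilon$: in the paper's convention the bubble in the unscaled domain has concentration rate $\lambda=1/(\varepsilon\Lambda)$, so the boundary correction from the mean curvature contributes $\sim \lambda^{-1}=\varepsilon\Lambda$ (see Lemma~\ref{LE-1}). Second, the bubble--bubble interaction term enters the reduced energy with a \emph{negative} sign: the positive contribution from $\int\nabla PU\cdot\nabla PV$ is overmatched by the $-\frac{1}{p+1}\int (PV)^{p+1}-\frac{1}{q+1}\int (PU)^{q+1}$ cross terms, yielding $-Q_4\Lambda^{N-2}\varepsilon$ with $Q_4>0$ as in Proposition~\ref{prop-energy}. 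With your written signs and powers ($+A_2\Lambda^{N-2}$ and $A_1\gamma\Lambda^{-1}$ with $\gamma<0$, $A_1,A_2>0$) \emph{both} displayed terms would be increasing in $\Lambda$, and there would be no interior critical point to find. You do state in prose that the two terms compete with opposite monotonicity, which is the correct picture, but the formula as written contradicts that; the corrected expansion is $F(\Lambda)=k(Q_0-Q_1\gamma\Lambda\varepsilon-Q_4\Lambda^{N-2}\varepsilon+o(\varepsilon))$, and since $\gamma<0$, the function $-Q_1\gamma\Lambda - Q_4\Lambda^{N-2}$ has a unique interior maximum. A smaller point: the Neumann projection error $\psi_{\Lambda,x_j}$ is not small because the bubbles are ``far from the boundary''---the $x_j$ sit \emph{on} $\partial\Omega_\varepsilon$; the smallness of $\psi_{\Lambda,x_j}\sim\varepsilon(1+|y-x_j|)^{-(N-3)}$ comes from a half-space reflection argument in which the leading boundary term vanishes and the remainder is controlled by the curvature (Lemma~\ref{LemmaA-1}).
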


As a consequence, we have

\begin{corollary}
   Suppose $N \geq 5$ and $p$ satisfies condition (A). Besides, $\Omega$ is a smooth bounded domain satisfying (H1), (H2), (H3), and $\mu$ is a fixed positive constant. Then system \eqref{equ-1} has infinitely many solutions whose energy can be arbitrarily large.
\end{corollary}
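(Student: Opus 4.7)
The plan is the standard Lyapunov--Schmidt reduction in the symmetric subspace $H_s$, exploiting the fact that the $k$-fold dihedral symmetry kills all pseudo-kernel directions coming from rotation along $\Gamma$, translation in the $y_3,\dots,y_N$ directions, and moving the peaks off $\partial\Omega_\varepsilon$; the only remaining parameter is the common dilation $\Lambda$. Looking for a solution in the form $(u,v)=(PU+\omega_1,PV+\omega_2)$ with $(\omega_1,\omega_2)\in H_s$, I plug into \eqref{equ-2} and rewrite the resulting system as
\[
   L_\varepsilon(\omega_1,\omega_2) = N_\varepsilon(\omega_1,\omega_2) + E_\varepsilon,
\]
where $L_\varepsilon$ is the linearization of \eqref{equ-2} at $(PU,PV)$, $N_\varepsilon$ collects the purely nonlinear remainders, and $E_\varepsilon$ is the error by which $(PU,PV)$ fails to solve \eqref{equ-2}. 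We solve this modulo the $1$-dimensional approximate kernel
\[
 \mathcal K_\Lambda \;=\; \mathrm{span}\Bigl\{\Bigl(\sum_{j=1}^k \partial_\Lambda PU_{\Lambda,x_j},\ \sum_{j=1}^k \partial_\Lambda PV_{\Lambda,x_j}\Bigr)\Bigr\}\cap H_s.
\]

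\textbf{The infinite-dimensional step.} The first task is to show that $L_\varepsilon$, restricted to $\mathcal K_\Lambda^\perp\cap H_s$, is invertible with norm of inverse bounded independently of $\varepsilon$ (up to a mild polynomial loss). The Hamiltonian structure forces me to work in the dual pair $\dot W^{2,(p+1)/p}\times \dot W^{2,(q+1)/q}$ with the standard test-function setting obtained from the embeddings \eqref{5}. I would argue by contradiction in the usual way: if there were a sequence $(\phi_{1,n},\phi_{2,n})$ in $\mathcal K_\Lambda^\perp$ of unit norm with $L_\varepsilon(\phi_{1,n},\phi_{2,n})\to 0$, then rescaling around each $x_j$ and using the sharp asymptotics of the ground state from \cite{ref3} plus the non-degeneracy theorem of \cite{ref5} shows that every weak limit must lie in the kernel of the linearized system at the model bubble; the orthogonality condition then forces the limit to be zero, and the residue near the $x_j$'s is controlled by interaction terms that decay under condition (A). The restriction (A) is used here precisely because it guarantees that $V_{1/\Lambda,x_j}^{p-1}V_{1/\Lambda,x_i}$ and $U_{1/\Lambda,x_j}^{q-1}U_{1/\Lambda,x_i}$ integrate to a negligible quantity compared with the diagonal terms. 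Once $L_\varepsilon^{-1}$ is under control, a contraction mapping on a ball of radius $o(\varepsilon^{(N-2)/2})$ yields a unique $(\omega_1(\Lambda),\omega_2(\Lambda))$, with Lipschitz dependence on $\Lambda$.

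\textbf{The finite-dimensional reduction.} Next I define the reduced functional $F_\varepsilon(\Lambda) = I_\varepsilon(PU+\omega_1(\Lambda),PV+\omega_2(\Lambda))$ on the interval $(\delta,\delta^{-1})$ and observe, via the orthogonality to $\mathcal K_\Lambda$, that any critical point of $F_\varepsilon$ lifts to a genuine solution of \eqref{equ-2}. The expansion of $F_\varepsilon$ should take the schematic form
\[
  F_\varepsilon(\Lambda) \;=\; k\,I_0(U_{\Lambda,0},V_{\Lambda,0}) \;+\; k\,\varepsilon\,\gamma\,A(\Lambda) \;+\; k\,B(\Lambda)\!\!\sum_{j=2}^{k}\!\bigl\langle U_{\tfrac1\Lambda,x_1},V_{\tfrac1\Lambda,x_j}^{p}\bigr\rangle \;+\; o(\text{leading}),
\]
where the first correction is the mean-curvature contribution produced by projecting the bubble onto a Neumann half-space (with $\gamma<0$ from (H3)) and the second is the bubble--bubble interaction along the circle $\Gamma$. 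The scale $\varepsilon=k^{-(N-2)/(N-3)}$ is chosen exactly so that these two corrections are of the same order $k\cdot k^{-(N-2)/(N-3)}$; since $\gamma<0$, the two terms have opposite $\Lambda$-dependence on a certain range, producing an interior critical point $\Lambda_*\in(\delta,\delta^{-1})$. The existence of $\Lambda_*$ is then upgraded to existence for the full $F_\varepsilon$ by a simple degree (or Rouch\'e-type) argument since the remainder is small in $C^1(\delta,\delta^{-1})$.

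\textbf{Expected main obstacle.} The delicate point is the sharp asymptotic expansion of $F_\varepsilon(\Lambda)$ and, simultaneously, the uniform invertibility of $L_\varepsilon$. The Hamiltonian coupling forces one to estimate integrals of $U_{\lambda,x_i}$ against $V_{\lambda,x_j}^p$ (not just $U$ against $U^{q-1}V$), and the two fundamental solutions decay at different rates dictated by $p\neq q$. The condition (A) on $p$ is exactly the sharp threshold under which (i) the interaction sum $\sum_j\langle U_{1/\Lambda,x_1},V_{1/\Lambda,x_j}^p\rangle$ is comparable to the mean-curvature correction rather than dominating it, and (ii) the error $E_\varepsilon$ is strictly smaller than $\varepsilon\cdot k$, which is what the contraction mapping requires. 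Verifying these point-wise decay and integral estimates with the required sharpness, and checking that neither the non-local contribution from the curved boundary nor the cross-terms destroy the leading-order picture, is the main quantitative work of the paper.
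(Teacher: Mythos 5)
Your proposal follows essentially the same route as the paper: a Lyapunov--Schmidt reduction in the symmetric class with the single dilation parameter $\Lambda$, an energy expansion in which the boundary mean-curvature contribution $-Q_1\gamma\Lambda\varepsilon$ (with $\gamma<0$ by (H3)) is balanced against the bubble--bubble interaction $-Q_4\Lambda^{N-2}\varepsilon$ at the scale $\varepsilon=k^{-\frac{N-2}{N-3}}$, and an interior critical point of the reduced functional for each large $k$, after which the corollary is immediate because the energies behave like $kQ_0\to\infty$ so distinct $k$ give distinct solutions. The only non-essential deviation is that you set up the linear theory in the dual Sobolev pair $\dot W^{2,\frac{p+1}{p}}\times\dot W^{2,\frac{q+1}{q}}$, whereas the paper works with the weighted sup-norms $\|\cdot\|_{*},\|\cdot\|_{**}$ and Green's representation, which is the device that actually yields estimates uniform in the growing number $k$ of bubbles.
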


Before proceeding further, we would like mention the main difficulty of this problem. Because of the Neumann boundary condition, the blow up points are naturally assumed to be on the boundary, ledding to the involvment of geometric property, especially the mean curvarure of the boundary, to the argument in energy expansion. Besides, the system is strongly indefinite and $u, v$ are trad off. Therefore, it is very complicated in each steps including the energy expansion and the estimates of the error terms, some new technique ideas are needed. We believe our results provide a use tool for other related studies concerning on system of Halmitonian type.

The paper is organized as follows. In Section 2, we proceed a finite reduction arguments.  Section 3 is devoted to the energy expansion of the approximate solution $(PU,PV).$ The proof of the Theorem \ref{t1} is given in Section 4. We put some essential and crucial estimates in Appendix.

\section{Finite Dimensional Reduction}
In this section, we shall employ finite-dimensional reduction technique to transform the original problem into a finite-dimensional counterpart. For this purpose, we introduce the following norms defined by:

$$
\|u\|_{*,1}=\sup\limits_{y\in\Omega_\varepsilon}\Big(\sum\limits_{j=1}^{k}\frac{1}{(1+
|y-x_{j}|)^{\frac{N}{q+1}+\tau}}\Big)^{-1}|u(y)|,
$$

$$
\|v\|_{*,2}=\sup\limits_{y\in\Omega_\varepsilon}\Big(\sum\limits_{j=1}^{k}\frac{1}{(1+
|y-x_{j}|)^{\frac{N}{p+1}+\tau}}\Big)^{-1}|v(y)|,
$$
and
$$
\|u\|_{**,1}=\sup\limits_{y\in\Omega_\varepsilon}\Big(\sum\limits_{j=1}^{k}\frac{1}{(1+
|y-x_{j}|)^{\frac{N}{q+1}+2+\tau}}\Big)^{-1}|u(y)|,
$$

$$
\|v\|_{**,2}=\sup\limits_{y\in\Omega_\varepsilon}\Big(\sum\limits_{j=1}^{k}\frac{1}{(1+
|y-x_{j}|)^{\frac{N}{p+1}+2+\tau}}\Big)^{-1}|v(y)|,
$$
where we choose $\tau = \frac{N-3}{N-2}$ in accordance with (A). Define
\begin{equation*}
\|(u,v)\|_* = \|u\|_{*,1}+\|v\|_{*,2}, \quad \|(u,v)\|_{**} = \|u\|_{**,1}+\|v\|_{**,2},
\end{equation*}

and
$$
Y_{i,1} = \dfrac{\partial PU_{\Lambda, x_i}}{\partial \Lambda}, \; Z_{i,1} = -\Delta Y_{i,1} + \mu \epsilon^2 Y_{i,1} = pV_{\frac{1}{\Lambda},x_i}^{p-1} \dfrac{\partial V_{\frac{1}{\Lambda}, x_i}}{\partial \Lambda},
$$

$$
Y_{i,2} = \dfrac{\partial PV_{\Lambda, x_i}}{\partial \Lambda}, \; Z_{i,2} = -\Delta Y_{i,2} + \mu \epsilon^2 Y_{i,2} = qU_{\frac{1}{\Lambda},x_i}^{q-1} \dfrac{\partial U_{\frac{1}{\Lambda}, x_i}}{\partial \Lambda},
$$
for $i=1,2,\cdots,N.$

Then, we define the following functional spaces that we work with, namely:

\[
   X= \{ (u,v) \in H^{1}(\Omega_{\epsilon}) \times H^{1}(\Omega_{\epsilon}), \dfrac{\partial u}{\partial n} =\dfrac{\partial v}{\partial n} = 0 \; \; \hbox{on}\;\; \partial \Omega_{\epsilon}, ||(u,v)||_{*}<+\infty \},
\]
and
\[
   E = \{ (u,v) \in X, \langle  \sum\limits_{i=1}^{k} Z_{i,1}, u \rangle + \langle \sum\limits_{i=1}^{k} Z_{i,2}, v \rangle = 0 \}.
\]

Since $X$ is a closed subspace of Hilbert space $H^{1}(\Omega_\epsilon) \times H^{1}(\Omega_\epsilon)$, there is an inner product defined on $X$ as:
\[
   \langle (u,v), (w,z) \rangle_E = \langle u,w \rangle_{H^{1}(\Omega_\epsilon)} + \langle v,z \rangle_{H^{1}(\Omega_\epsilon)},
\]
where $\langle u,w \rangle_{H^{1}(\Omega_\epsilon)} = \int_{\Omega_{\epsilon}} (\nabla u \nabla w + \mu \epsilon^2 uw )$. This inner product will mainly be used in the proof of Lemma \ref{existence}.

Note that our goal is to seek a solution of the form $(PU+\omega_1, PV +\omega_2 )$ for \eqref{equ-2}. By direct computation, we can verify that $(\omega_1,\omega_2)$ satisfies the following equations:
\begin{equation}\label{op-1}
L(\omega_1,\omega_2) =l +N(\omega_1,\omega_2),
\end{equation}
where
\begin{equation}\label{op-2}
\begin{split}
L(\omega_1,\omega_2) =& \Big(L_1 (\omega_1,\omega_2), L_2 (\omega_1,\omega_2)\Big) \\
=&\Big(   -\Delta \omega_1 +\mu \varepsilon^2 \omega_1 -p(PV)^{p-1} \omega_2,  -\Delta \omega_2 + \mu \varepsilon^2 \omega_2-q(PU)^{q-1} \omega_1\Big),
\end{split}
\end{equation}

\begin{equation}\label{op-3}
l= \Big(l_1,l_2\Big)= \Big( (PV)^p -\sum\limits_{j=1}^k V_{\frac{1}{\Lambda},x_j}^p,  (PU)^q -\sum\limits_{j=1}^k U_{\frac{1}{\Lambda},x_j}^q   \Big),
\end{equation}
and
\begin{equation}\label{op-4}
N(\omega_1,\omega_2)= \Big(N_1(\omega_2),N_2(\omega_1)\Big) ,
\end{equation}
with
\[
\begin{split}
N_1(\omega_2) = & (PV+\omega_2)^p -(PV)^p-p(PV)^{p-1}\omega_2  , \\
N_2(\omega_1) = & (PU+\omega_1)^q -(PU)^q-q(PU)^{q-1}\omega_1  .
\end{split}
\]

Next we consider the following problem:
\begin{equation}\label{equ-reduction}
   \begin{cases}
      L(\omega_1,\omega_2) = h + c (\sum\limits_{i=1}^{k} Z_{i,1}, \sum\limits_{i=1}^{k} Z_{i,2})\;\;\; \hbox{in} \;\;\; \Omega_{\epsilon},\\
      \frac{\partial \omega_{1}}{\partial n}=\frac{\partial \omega_{2}}{\partial n}=0 \;\;\; \hbox{on} \;\;\; \partial\Omega_{\epsilon}, \\
      \omega_1, \omega_2 \in H_s ,\\
      \langle ( \sum\limits_{i=1}^{k} Z_{i,1}, \sum\limits_{i=1}^{k} Z_{i,2} ) , ( \omega_1 ,\omega_2  ) \rangle = 0.
   \end{cases}
\end{equation}
We remark that  by the symmetry assumption on $\Omega$ and the fact that $\omega_{1}, \omega_{2} \in H_{s}$, we do not require the translational derivatives of $PU_{\Lambda,x_i}$ and $PV_{\Lambda,x_i}$.

To perform the reduction process, we first give the following  a priori estimate for solutions of equation (\ref{equ-reduction}).
\begin{lemma}\label{prioriestimate}
   Assume $N\geq 5$, $p$ satisfies condition (A) and $\omega_k=(\omega_{k,1},\omega_{k,2})$ solves \eqref{equ-reduction} for $h_k = (h_{k,1}, h_{k,2})$. If $||h_k||_{**}$ goes to zero as k goes to infinity, then $||\omega_{k}||_{*}$ also goes to zero as $k$ goes to infinity.
\end{lemma}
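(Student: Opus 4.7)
The plan is to proceed by contradiction, combining weighted pointwise estimates obtained from a Green's representation with a blow-up analysis that invokes the nondegeneracy of the ground state of \eqref{4}. Suppose the conclusion fails. By the linearity of \eqref{equ-reduction} I may rescale so that $\|\omega_k\|_* = 1$ while $\|h_k\|_{**} \to 0$; the goal is to reach a contradiction.

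\textbf{Step 1. Control of the multiplier $c_k$.}
Write $c=c_k$ for the Lagrange multiplier in \eqref{equ-reduction}. Testing the two components of \eqref{equ-reduction} against the pair $(Y_{i,2}, Y_{i,1})$ and summing in $i$, integrating by parts using the Neumann condition, and exploiting the fact that $(Y_{i,1},Y_{i,2})$ is approximately a kernel element of the linearization of the Hamiltonian system at the bubble $(PU_{\Lambda,x_i}, PV_{\Lambda,x_i})$, I would convert the left-hand side into a quantity of size $O(\|h_k\|_{**})+o(1)\|\omega_k\|_*$ while the right-hand side becomes an almost-diagonal linear expression in $c_k$ with a nondegenerate coefficient. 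This yields $|c_k|=o(1)$.

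\textbf{Step 2. Weighted pointwise estimate.}
Let $G_\varepsilon$ denote the Green function of $-\Delta+\mu\varepsilon^2$ in $\Omega_\varepsilon$ with Neumann data. Writing $\omega_{k,1}$ and $\omega_{k,2}$ as convolutions with $G_\varepsilon$ and combining the appendix estimates (which control the convolution of $G_\varepsilon$ against weights of the form $\sum_j(1+|y-x_j|)^{-a}$) with the known decay of $(PU)^{q-1}$ and $(PV)^{p-1}$, one would get
\begin{equation*}
\|\omega_{k,1}\|_{*,1}\le C\|h_{k,1}\|_{**,1}+o_k(1)\|\omega_{k,2}\|_{*,2}+o_k(1),
\end{equation*}
and symmetrically for $\omega_{k,2}$. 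Here the choice $\tau=(N-3)/(N-2)$ is tuned precisely so that the convolution reproduces the same weight, and the lower bound on $p$ in condition (A) guarantees that the cross-coupling coefficient is genuinely $o(1)$ rather than merely bounded.

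\textbf{Step 3. Blow-up at a bubble.}
Since $\|\omega_k\|_*=1$, there exist $y_k$, an index $j_k$, and $m\in\{1,2\}$ with $|\omega_{k,m}(y_k)|\gtrsim (1+|y_k-x_{j_k}|)^{-\sigma_m-\tau}$, where $\sigma_1=\tfrac{N}{q+1}$ and $\sigma_2=\tfrac{N}{p+1}$. Step 2 forces $|y_k-x_{j_k}|=O(1)$. Rescaling
\begin{equation*}
\tilde\omega_{k,1}(y):=\Lambda^{-\frac{N}{q+1}}\omega_{k,1}(x_{j_k}+\Lambda^{-1}y),\qquad \tilde\omega_{k,2}(y):=\Lambda^{-\frac{N}{p+1}}\omega_{k,2}(x_{j_k}+\Lambda^{-1}y),
\end{equation*}
flattening $\partial\Omega_\varepsilon$ at $x_{j_k}$ and extending by reflection across the flattened boundary (permitted by the Neumann condition), elliptic estimates yield, along a subsequence, a bounded limit $(\tilde\omega_1,\tilde\omega_2)$ on $\mathbb{R}^N$ solving
\begin{equation*}
-\Delta\tilde\omega_1=pV_{1,0}^{p-1}\tilde\omega_2,\qquad -\Delta\tilde\omega_2=qU_{1,0}^{q-1}\tilde\omega_1.
\end{equation*}

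\textbf{Step 4. Nondegeneracy and contradiction.}
By the nondegeneracy result \cite{ref5}, every bounded solution of the limiting system lies in the span of the scaling derivative $(Y_1,Y_2)$ together with the $N$ translational derivatives of $(U_{1,0},V_{1,0})$. The orthogonality constraint in \eqref{equ-reduction} passes to the limit and kills the scaling mode; the symmetries encoded in $H_s$ kill the translations (the tangential ones via $y_h$-evenness and the $\mathbb{Z}_k$-cyclic invariance, and the normal one via hypothesis (H3) pinning $x_{j_k}$ to $\Gamma$). Hence $\tilde\omega\equiv 0$, contradicting the lower bound at $y_k$ surviving the rescaling.

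\textbf{Main obstacle.}
I expect the delicate step to be the interface of Steps 3 and 4: one must ensure that flattening $\partial\Omega_\varepsilon$ and reflecting about the boundary produces a global limit to which the $\mathbb{R}^N$ nondegeneracy result \cite{ref5} applies, and that every orthogonality and symmetry relation truly persists under this passage to the limit. The pointwise bound in Step 2 is conceptually standard but hinges on condition (A) being just strong enough to close the convolution estimate with the weight $\tau=(N-3)/(N-2)$, and the appendix lemmas provide the technical backbone for that step.
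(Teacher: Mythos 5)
Your plan follows the same architecture as the paper's proof: argue by contradiction after normalization, use the Green's representation together with the weighted convolution estimates of the Appendix to control $\omega_k$ pointwise, estimate the Lagrange multiplier $c_k$ by pairing the two equations against $(Y_{i,2},Y_{i,1})$ and adding, and then blow up near a concentration point to obtain a bounded solution of the linearized Lane--Emden system, which must vanish by the nondegeneracy result of \cite{ref5}. This is precisely what the paper does.

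There is, however, an internal inconsistency in Step~2 as you have written it. If the convolution estimate really gave
\[
\|\omega_{k,1}\|_{*,1}\le C\|h_{k,1}\|_{**,1}+o_k(1)\|\omega_{k,2}\|_{*,2}+o_k(1),
\]
and its symmetric counterpart for $\omega_{k,2}$, then combining the two would immediately force $\|\omega_k\|_*\to0$ and render Steps~3--4 superfluous. What the Appendix actually yields (Lemma~\ref{first-term}) is
\[
\int_{\Omega_\varepsilon}\frac{|(PV)^{p-1}\omega_2|}{|y-z|^{N-2}}\,dz\le C\|\omega_2\|_{*,2}\left(\sum_i\frac{1}{(1+|y-x_i|)^{\frac{N}{q+1}+\tau+\theta}}+o(1)\sum_i\frac{1}{(1+|y-x_i|)^{\frac{N}{q+1}+\tau}}\right),
\]
and the first bracketed sum, divided by the reference weight $\sum_i(1+|y-x_i|)^{-\frac{N}{q+1}-\tau}$, is only bounded: it is of order one near the $x_i$ and $o(1)$ away from them. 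Thus the $*$-norm of the cross term is $O(1)\|\omega_2\|_{*,2}$, not $o(1)\|\omega_2\|_{*,2}$, and condition~(A) buys the extra decay exponent $\theta$ rather than a small coefficient. The correct output of Step~2 is the weaker assertion that, since $\|\omega_k\|_*=1$, the weighted sup must be attained within a bounded distance of some $x_{j_k}$; this is exactly the concentration statement your Step~3 uses, so the structure of your argument is right, but the claimed $o(1)$-coefficient bound should be replaced by that concentration statement.

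Two smaller corrections in Step~4. In the blow-up limit the $\mathbb{Z}_k$-symmetry degenerates (neighboring bubbles recede at rate $\sim 1/(k\varepsilon)=k^{1/(N-3)}\to\infty$), so it does not eliminate any kernel mode; what kills all the tangential translation derivatives is the evenness in $y_h$ for $h=2,\dots,N$, because $x_{j_k}$ has $y_h=0$ for those $h$ and the translation derivatives are odd in $y_h$. The normal translation mode is killed by the even extension across the flattened boundary (which is forced by the Neumann condition), not by~(H3). Finally, the paper only translates, $\bar\omega_{k,m}(y)=\omega_{k,m}(y-x_{j_k})$, without rescaling by $\Lambda$; this is harmless since $\Lambda\in[\delta,\delta^{-1}]$ is precompact, but your normalization by $\Lambda$ is an unnecessary complication.
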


\begin{proof}
   We argue by contradiction. Suppose that $\omega_{k} = (\omega_{k,1}, \omega_{k,2})$ solves equation \eqref{equ-reduction} with $h=h_k$. And as $k \rightarrow \infty$, $||h||_{**}=||h_k||_{**} \rightarrow 0, \Lambda_{k} \in [\delta, \delta^{-1}]$, $||\omega_{k}||_{*} \geq c > 0$. Without loss of generality, we may assume $||\omega_{k}||_{*} = 1 $. For simplicity, we drop the subscript $k$ and write $\omega=(\omega_1,\omega_2)$, $h=(h_1,h_2)$.

 According to (\ref{op-2}) and (\ref{equ-reduction}), we have
 \begin{equation}\label{omega1}
   -\Delta w_1 + \mu\epsilon^2 \omega_1 - p(PV)^{p-1}\omega_2 = h_1 + c\sum\limits_{i=1}^{k}Z_{i,1}.
 \end{equation}

Then, based on Lemma \ref{LemmaA-3}, we deduce that
\begin{equation}\label{esti-omega1}
|\omega_1(y)| \leq C \int_{\Omega_{\epsilon}} \dfrac{1}{|y-z|^{N-2}} (  |(PV)^{p-1}(z)\omega_{2}(z)| + |h_{1}(z)| + c|\sum\limits_{i=1}^{k}Z_{i,1}| ) dz.
\end{equation}

We estimate the right three terms of (\ref{esti-omega1}) respectively.

By Lemma \ref{first-term}, we have
\[
\begin{split}
   & \quad \int_{\Omega_{\epsilon}} \dfrac{1}{|y-z|^{N-2}} |(PV)^{p-1}(z)\omega_{2}(z)| dz \\
   & \leq C||w_2||_{*,2}\left( \sum\limits_{i=1}^{k} \dfrac{1}{(1+|y-x_i|)^{\frac{N}{q+1}+\tau+ \theta}} + o(1)\sum\limits_{i=1}^{k} \dfrac{1}{(1+|y-x_i|)^{\frac{N}{q+1}+\tau }} \right).
\end{split}
\]

Moreover, it follows from Lemma B.3 and the fact $\frac{N}{q+1}+2+\tau < N$ that
\[
   \begin{split}
      & \quad \int_{\Omega_{\epsilon}} \dfrac{1}{|z-y|^{N-2}} |h_1(z)| dz \\
      & \leq ||h_1||_{**,1} \int_{\R^N} \dfrac{1}{|z-y|^{N-2}} \sum\limits_{i=1}^{k}\dfrac{1}{(1+|z-x_i|)^{\frac{N}{q+1}+2+\tau}}dz \\
      &\leq ||h_1||_{**,1} \sum\limits_{i=1}^{k}\dfrac{1}{(1+|y-x_i|)^{\frac{N}{q+1}+\tau}}.
   \end{split}
\]

Now we estimate the last term on the right side of (\ref{esti-omega1}). By Lemma \ref{LemmaA-3} and Lemma \ref{LemmaA-4}, we have
$$
|Z_{i,1}(z)| = p\left|V_{\frac{1}{\Lambda}, x_{i}}^{p-1} \dfrac{\partial V_{\frac{1}{\Lambda},x_{i}}}{ \partial \Lambda}\right| \leq C\dfrac{1}{(1+|z-x_i|)^{(N-2)p}},
$$
where $C$ depends on $N$, $p$ and $\delta$. Thus, it follows that
\[
   \begin{split}
      &\quad \int_{\Omega_{\epsilon}} \dfrac{1}{|z-y|^{N-2}} \sum\limits_{i=1}^{k}|Z_{i,1}(z)| dz \\
      &\leq C \int_{\Omega_{\epsilon}} \dfrac{1}{|z-y|^{N-2}} \sum\limits_{i=1}^{k}  \dfrac{1}{(1+|z-x_i|)^{(N-2)p}} dz\\
      & \leq C \sum\limits_{i=1}^{k} \dfrac{1}{(1+|y-x_i|)^{N-2}} \leq C \sum\limits_{i=1}^{k} \dfrac{1}{(1+|y-x_i|)^{\frac{N}{q+1}+\tau}}.
   \end{split}
\]

Similarly, we estimate $\omega_{2}$. Since $\omega_{2}$ satisfies
\begin{equation}\label{omega2}
   -\Delta w_2 + \mu\epsilon^2 \omega_2 - q(PU)^{q-1}\omega_1 = h_2 + c\sum\limits_{i=1}^{k}Z_{i,2}.
 \end{equation}
By using the similar arguments as in the estimates of $\omega_1,$ we have
$$
|\omega_2(y)| \leq C \int_{\Omega_{\epsilon}} \dfrac{1}{|y-z|^{N-2}} (  |(PU)^{q-1}(z)\omega_{1}(z)| + |h_{2}(z)| + c|\sum\limits_{i=1}^{k}Z_{i,2}| ) dz,
$$
and furthermore,
\[
   \begin{split}
     &\quad  \int_{\Omega_{\epsilon}} \dfrac{1}{|y-z|^{N-2}} |(PU)^{q-1}(z)\omega_{1}(z)|  \\
      &\leq C||w_1||_{*,1}\left( \sum\limits_{i=1}^{k} \dfrac{1}{(1+|y-x_i|)^{\frac{N}{p+1}+\tau+ \theta}}  + o(1)\sum\limits_{i=1}^{k} \dfrac{1}{(1+|y-x_i|)^{\frac{N}{p+1}+\tau }} \right) ,
   \end{split}
\]

$$
\int_{\Omega_{\epsilon}} \dfrac{1}{|y-z|^{N-2}} |h_{2}(z)| \leq C||h_2||_{**,2}\sum\limits_{i=1}^{k}\dfrac{1}{(1+|y-x_i|)^{\frac{N}{p+1} + \tau}} ,
$$
and
$$
\int_{\Omega_{\epsilon}} \dfrac{1}{|y-z|^{N-2}} |\sum\limits_{i=1}^{k}Z_{i,2}|  \leq C \sum\limits_{i=1}^{k}\dfrac{1}{(1+|y-x_i|)^{\frac{N}{p+1} + \tau}} .
$$
Next, we estimate $c$. Multiplying $Y_{i,2}$ on both sides of equation (\ref{omega1}) and integrating to obtain
\begin{equation}\label{c1}
   \begin{split}
   c \left\langle \sum\limits_{j=1}^{k}Z_{j,1} , Y_{i,2}  \right\rangle & = \left\langle -\Delta \omega_{1} + \mu\epsilon^2 \omega_{1} - p(PV)^{p-1}\omega_{2} , Y_{i,2} \right\rangle - \left\langle h_1 , Y_{i,2} \right\rangle \\
   & = \left\langle -\Delta Y_{i,2} + \mu\epsilon^2 Y_{i,2} , \omega_{1}\right\rangle - \left\langle  p(PV)^{p-1}Y_{i,2}, \omega_{2} \right\rangle - \left\langle h_1 , Y_{i,2} \right\rangle \\
   & = \left\langle qU_{\frac{1}{\Lambda}, x_i}^{q-1}\dfrac{\partial U_{\Lambda , x_i}}{\partial \Lambda} , \omega_{1}\right\rangle  - \left\langle  p(PV)^{p-1}Y_{i,2}, \omega_{2} \right\rangle - \left\langle h_1 , Y_{i,2} \right\rangle.
   \end{split}
\end{equation}
Multiplying $Y_{i,1}$ on both sides of equation (\ref{omega2}) and integrating, we have
\begin{equation}\label{c2}
c \left\langle \sum\limits_{j=1}^{k}Z_{j,2} , Y_{i,1}  \right\rangle = \left\langle pV_{\frac{1}{\Lambda}, x_i}^{p-1}\dfrac{\partial V_{\Lambda , x_i}}{\partial \Lambda} , \omega_{2}\right\rangle  - \left\langle  q(PU)^{q-1}Y_{i,1}, \omega_{1} \right\rangle - \left\langle h_2 , Y_{i,1} \right\rangle.
\end{equation}
By adding equations (\ref{c1}) and (\ref{c2}), we obtain
\[
   \begin{split}
& c\left( \left\langle \sum\limits_{j=1}^{k}Z_{j,1} , Y_{i,2}  \right\rangle +  \left\langle \sum\limits_{j=1}^{k}Z_{j,2} , Y_{i,1}  \right\rangle \right)  = p \left\langle V_{\frac{1}{\Lambda}, x_i}^{p-1}\dfrac{\partial V_{\frac{1}{\Lambda} , x_i}}{\partial \Lambda} - (PV)^{p-1}\dfrac{\partial PV_{\Lambda, x_i}}{\partial \Lambda} , \omega_2 \right\rangle \\
& + q \left\langle U_{\frac{1}{\Lambda}, x_i}^{q-1}\dfrac{\partial U_{\frac{1}{\Lambda} , x_i}}{\partial \Lambda} - (PU)^{q-1}\dfrac{\partial PU_{\Lambda, x_i}}{\partial \Lambda} , \omega_1 \right\rangle -\left\langle h_1 , Y_{i,2} \right\rangle - \left\langle h_2 , Y_{i,1} \right\rangle.
   \end{split}
\]
We estimate each term on both sides separately. Firstly, by Lemma \ref{B2}, we have
\[
\begin{split}
   | \left\langle h_1 , Y_{i,2} \right\rangle | & \leq C ||h_{1}||_{**,1} \int_{\R^N} |Y_{i,2}| \sum\limits_{j=1}^{k} \dfrac{1}{(1+|y-x_j|)^{\frac{N}{q+1}+ 2+\tau}} dy \\
   & \leq C ||h_{1}||_{**,1} \int_{\R^N} \dfrac{1}{(1+|y-x_i|)^{N-2}}\sum\limits_{j=1}^{k} \dfrac{1}{(1+|y-x_j|)^{\frac{N}{q+1}+ 2+\tau}} dy \\
   & \leq C||h_1||_{**,1},
\end{split}
\]
and similarly,
$$
| \left\langle h_2 , Y_{i,1} \right\rangle | \leq C||h_2||_{**,2}.
$$
Then, by Lemma \ref{B1} and Lemma \ref{LemmaA-4}, we have
\[
   \begin{split}
|\omega_2(y)| & \leq C||\omega_{2}||_{*,2} \sum\limits_{j=1}^{k}\dfrac{1}{(1+|y-x_i|)^{\frac{N}{p+1}+\tau}} \\
& \leq  C||\omega_{2}||_{*,2} \left( 1+\sum\limits_{j=2}^{k} \dfrac{1}{|x_1 - x_j|^{\frac{N}{p+1}+\tau}} \right) \\
& \leq C||\omega_{2}||_{*,2}.
\end{split}
\]
Furthermore, from Lemma A.2 we deduce for $N\geq 6$ that
$$
|\psi_{\Lambda,x_j}(y)| \leq \dfrac{C\varepsilon }{(1+|y-x_j|)^{N-3}} \leq \dfrac{C\varepsilon^{\sigma} }{(1+|y-x_j|)^{N-2-\sigma}},
$$
for arbitrary $\sigma \in (0,1)$ since $\varepsilon < \frac{C}{1+|y-x_j|}$. Then, we have, by Lemma \ref{LemmaA-2}, that
\[
   \begin{split}
      &\quad \left| \left\langle V_{\frac{1}{\Lambda}, x_i}^{p-1}\dfrac{\partial V_{\frac{1}{\Lambda} , x_i}}{\partial \Lambda} - (PV)^{p-1}\dfrac{\partial PV_{\Lambda, x_i}}{\partial \Lambda} , \omega_2 \right\rangle \right| \\
      & \leq \int_{\Omega_{\varepsilon}} \left|  \left( V_{\frac{1}{\Lambda},x_i}^{p-1}  - (PV)^{p-1} \right) \dfrac{\partial PV_{\Lambda, x_i}}{\partial \Lambda} \omega_2   + V_{\frac{1}{\Lambda},x_i}^{p-1} \dfrac{\partial \psi_{\Lambda,x_i}}{\partial \Lambda} \omega_2  \right| \\
      & \leq C \int_{\Omega_{\varepsilon}} \left(  \sum\limits_{j \neq i}  (PV_{\Lambda, x_j})^{p-1} \left| \dfrac{\partial PV_{\Lambda, x_i}}{\partial \Lambda}  \right| +  V_{\frac{1}{\Lambda},x_i}^{p-1}\left| \dfrac{\partial \psi_{\Lambda,x_i}}{\partial \Lambda} \right| + \left| \psi_{\Lambda,x_i} \right|^{p-1} \left| \dfrac{\partial PV_{\Lambda,x_i}}{\partial \Lambda} \right| \right) | \omega_2 |.
   \end{split}
\]
Note that by Lemma B.2 and Lemma \ref{LemmaA-4}, we have
\[
   \begin{split}
      &\quad  \int_{\Omega_{\varepsilon}} \left( V_{\frac{1}{\Lambda},x_i}^{p-1}|\dfrac{\partial \psi_{\Lambda,x_i}}{\partial \Lambda} | + | \psi_{\Lambda,x_i}|^{p-1} | \dfrac{\partial PV_{\Lambda,x_i}}{\partial \Lambda} |  \right) |\omega_2| \\
      & \leq C ||\omega_2||_{*,2} \int_{\Omega_{\varepsilon}} \frac{\varepsilon^{\sigma}}{(1+|y-x_i|)^{(N-2)p-\sigma}} + C ||\omega_2||_{*,2} \int_{\Omega_{\varepsilon}}  \dfrac{\varepsilon^{\sigma(p-1)}}{(1+|y-x_i|)^{(N-2)p-\sigma(p-1)}} \\
      & =o(1)||\omega_2||_{*,2}.
   \end{split}
\]
Using the same argument, we have
\[
  \begin{split}
   &\quad  \int_{\Omega_{\varepsilon}} \left| \sum\limits_{j \neq i}  (PV_{\Lambda, x_j})^{p-1}  \dfrac{\partial PV_{\Lambda, x_i}}{\partial \Lambda}   \omega_2 \right| \\
   & \leq C ||\omega_2||_{*,2} \sum\limits_{j\neq i} \int_{\Omega_{\varepsilon}}  \dfrac{1}{(1+|y-x_j|)^{(N-2)(p-1)}} \dfrac{1}{(1+|y-x_i|)^{N-2}} \\
   & \leq C ||\omega_2||_{*,2} \sum\limits_{j\neq i} \dfrac{1}{|x_j-x_i|^{\tau + \sigma } } \int_{\Omega_{\varepsilon}}  \dfrac{1}{(1+|y-x_i|)^{(N-2)p - \tau -\sigma}} \\
   & = o(1)||\omega_2||_{*,2},
  \end{split}
\]
where the last equality holds due to the fact that $p$ satisfies condition (A) and $\sigma$ can be chosen sufficiently small. Therefore, we have
$$
p \left\langle V_{\frac{1}{\Lambda}, x_i}^{p-1}\dfrac{\partial V_{\frac{1}{\Lambda} , x_i}}{\partial \Lambda} - (PV)^{p-1}\dfrac{\partial PV_{\Lambda, x_i}}{\partial \Lambda} , \omega_2 \right\rangle = o(1)||\omega_2||_{*,2}.
$$

As for $N=5$, we can similarly get
\[
   \begin{split}
      &\quad \left| \left\langle V_{\frac{1}{\Lambda}, x_i}^{p-1}\dfrac{\partial V_{\frac{1}{\Lambda} , x_i}}{\partial \Lambda} - (PV)^{p-1}\dfrac{\partial PV_{\Lambda, x_i}}{\partial \Lambda} , \omega_2 \right\rangle \right|  \\
      & \leq C   \int_{\Omega_{\varepsilon}}  \left( V_{\frac{1}{\Lambda},x_i}^{p-2} \sum\limits_{j \neq i} V_{\frac{1}{\Lambda},x_j} \left| \dfrac{\partial PV_{\Lambda,x_i}}{\partial \Lambda} \right| + \left( \sum\limits_{j \neq i} V_{\frac{1}{\Lambda},x_j} \right)^{p-1} \left| \dfrac{\partial PV_{\Lambda,x_i}}{\partial \Lambda} \right|   \right)|\omega_2|   \\
      & +  C \int_{\Omega_{\varepsilon}} \left( V_{\frac{1}{\Lambda},x_i}^{p-1}  \left| \dfrac{\partial \psi_{\Lambda,x_i}}{\partial \Lambda} \right| + \left| \psi_{\Lambda,x_i} \right|^{p-1} \left| \dfrac{\partial PV_{\Lambda,x_i}}{\partial \Lambda} \right| + V_{\frac{1}{\Lambda},x_i}^{p-2} \left| \psi_{\Lambda,x_i} \dfrac{\partial PV_{\Lambda,x_i}}{\partial \Lambda} \right|  \right) | \omega_2 | \\
      & \leq C \int_{\Omega_{\varepsilon}}  \left( \sum\limits_{j \neq i} V_{\frac{1}{\Lambda},x_j} \right)^{p-1} \left| \dfrac{\partial PV_{\Lambda,x_i}}{\partial \Lambda} \right| |\omega_2| + o(1)||\omega_2||_{*,2}
   \end{split}
\]
Let
\[
      \Omega_{j} = \{ y = (y^{\prime}, y^{\prime \prime}) \in \R^2 \times \R^{N-2}: \langle \dfrac{y^{\prime}}{|y^{\prime}|}, \dfrac{x_j}{|x_j|}\rangle \geq cos \dfrac{\pi}{k}  \}.
\]
If $y \in \Omega_i$, then it follows that $|y-x_j| \geq |y - x_i|$ and $2|y-x_j| \geq |x_j - x_i|$ for $j \neq i$. Thus, we obtain
\[
   \begin{split}
      \sum\limits_{j \neq i} V_{\frac{1}{\Lambda}, x_j} & \leq \dfrac{1}{(1+|y-x_i|)^{N-2-\tau- \theta}} \sum\limits_{j \neq i} \dfrac{1}{(1+|y-x_j|^{\tau + \theta})} \\
      & \leq \dfrac{1}{(1+|y-x_i|)^{N-2-\tau- \theta}} \sum\limits_{j \neq i} \dfrac{C}{(|x_i-x_j|^{\tau + \theta})} \\
      & = o(1) \dfrac{1}{(1+|y-x_i|)^{N-2 - \tau - \theta}},
   \end{split}
\]
where $\theta$ can be chosen arbitrarily small. On the other hand, we can deduce from Lemma \ref{LemmaA-4} that
\[
   \begin{split}
       \sum\limits_{j=1}^{k} \dfrac{1}{(1+|y-x_j|)^{\frac{N}{p+1} + \tau}} & \leq \dfrac{1}{(1+|y-x_i|)^{\frac{N}{p+1} + \tau}} + \sum\limits_{j \neq i} \dfrac{C}{|x_j - x_i|^{\tau}} \dfrac{1}{(1+|y-x_j|)^{\frac{N}{p+1}}} \\
       & \leq \dfrac{C}{(1+|y-x_i|)^{\frac{N}{p+1}}}.
   \end{split}
\]
Therefore, it follows that
\[
   \begin{split}
      & \quad \int_{\Omega_i} \left( \sum\limits_{j \neq i} V_{\frac{1}{\Lambda} , x_j}  \right)^{p-1} \left| \dfrac{\partial PV_{\Lambda,x_i}}{\partial \Lambda} \right| |\omega_2| \\
      & =o(1)||\omega_2||_{*,2} \int_{\Omega_1} \dfrac{1}{(1+|y-x_i|)^{(p-1)(N-2-\tau-\theta) + \frac{N}{p+1} + \tau}} \\
      & = o(1)||\omega_2||_{*,2}.
   \end{split}
\]
If $y \in \Omega_{l}$, where $l \neq i$. Then it follows from the same argument as above that
\[
   \begin{split}
      \sum\limits_{j \neq i} V_{\frac{1}{\Lambda}, x_j} & \leq \sum\limits_{j \neq i} \dfrac{1}{(1+|y-x_j|)^{N-2}} \\
      & \leq \dfrac{1}{(1+|y-x_l|)^{N-2}} + \sum\limits_{j \neq l} \dfrac{1}{|x_j - x_l|^{\tau}} \dfrac{1}{(1+|y-x_l|)^{N-2-\tau}} \\
      & \leq \dfrac{C}{(1+|y-x_l|)^{N-2-\tau}}.
   \end{split}
\]
Similarly, we have
\[
   \sum\limits_{j \neq i} \dfrac{1}{(1+|y-x_j|)^{\frac{N}{p+1} + \tau}} \leq  \dfrac{C}{(1+|y-x_l|)^{\frac{N}{p+1}}}.
\]
Therefore, we deduce that
\[
   \begin{split}
      & \quad \int_{\Omega_l} \left( \sum\limits_{j \neq i} V_{\frac{1}{\Lambda} , x_j}  \right)^{p-1} \left| \dfrac{\partial PV_{\Lambda,x_i}}{\partial \Lambda} \right| |\omega_2| \\
      & \leq C \int_{\Omega_l} \dfrac{1}{(1+|y-x_l|)^{(N-2-\tau)(p-1) + \frac{N}{p+1}}} \dfrac{1}{(1+|y-x_i|)^{N-2}} \\
      & \leq  C \dfrac{1}{|x_l - x_i|^{(N-2-\tau)(p-1) + \frac{N}{p+1} - \tau}}.
   \end{split}
\]
As a result, we obtain that
\[
   \begin{split}
       & \quad \int_{\Omega_{\varepsilon}} \left( \sum\limits_{j \neq i} V_{\frac{1}{\Lambda} , x_j}  \right)^{p-1} \left| \dfrac{\partial PV_{\Lambda,x_i}}{\partial \Lambda} \right| |\omega_2| \\
       & = \left( \int_{\Omega_i} + \sum\limits_{l \neq i} \int_{\Omega_l}   \right) \left( \sum\limits_{j \neq i} V_{\frac{1}{\Lambda} , x_j}  \right)^{p-1} \left| \dfrac{\partial PV_{\Lambda,x_i}}{\partial \Lambda} \right| |\omega_2| \\
       & \leq o(1)||\omega_2||_{*,2} + \sum\limits_{j \neq i} \dfrac{C}{|x_l - x_i|^{(N-2-\tau)(p-1) + \frac{N}{p+1} - 2}}\\
       & = o(1)||\omega_2||_{*,2},
   \end{split}
\]
where the last equality holds beacuse $(N-2-\tau)(p-1) + \frac{N}{p+1} - 2 > \tau$ when $N=5$.
In conclusion, when $N \geq 5$ and $p$ satisfies condition (A), it holds that
\[
   p \left\langle V_{\frac{1}{\Lambda}, x_i}^{p-1}\dfrac{\partial V_{\frac{1}{\Lambda} , x_i}}{\partial \Lambda} - (PV)^{p-1}\dfrac{\partial PV_{\Lambda, x_i}}{\partial \Lambda} , \omega_2 \right\rangle = o(1)||\omega_2||_{*,2}.
\]

Similarly, we have
$$
q \left\langle U_{\frac{1}{\Lambda}, x_i}^{q-1}\dfrac{\partial U_{\frac{1}{\Lambda} , x_i}}{\partial \Lambda} - (PU)^{q-1}\dfrac{\partial PU_{\Lambda, x_i}}{\partial \Lambda} , \omega_1 \right\rangle = o(1)||\omega_1||_{*,1}.
$$
The same estimates holds when $N=5$ by employing the same argument. Besides, using a similar computation of Lemma \ref{LE-1}, we know there exists a $\tilde{c}>0$ such that
$$
\left( \left\langle \sum\limits_{j=1}^{k}Z_{j,1} , Y_{i,2}  \right\rangle +  \left\langle \sum\limits_{j=1}^{k}Z_{j,2} , Y_{i,1}  \right\rangle \right) = \tilde{c} + o(1).
$$
Thus we conclude that
$$
c = o(||(\omega_1,\omega_2)||_{*}) + O(||(h_1,h_2)||_{**}).
$$
And consequently, we have
$$
||(\omega_1,\omega_2)||_{*} \leq C \left( o(1) + ||(h_1,h_2)||_{**} + \dfrac{\sum\limits_{j=1}^{k}\frac{1}{(1+|y-x_j|)^{\frac{N}{q+1}+\tau+\theta}}}{\sum\limits_{j=1}^{k}\frac{1}{(1+|y-x_j|)^{\frac{N}{q+1}+\tau}}} + \dfrac{\sum\limits_{j=1}^{k}\frac{1}{(1+|y-x_j|)^{\frac{N}{p+1}+\tau+\theta}}}{\sum\limits_{j=1}^{k}\frac{1}{(1+|y-x_j|)^{\frac{N}{p+1}+\tau}}} \right).
$$
Since $||(\omega_1,\omega_2)||_{*} = 1$, we deduce that there is a $R>0$ and $ c_0 > 0$ such that
$$
||\omega_1||_{L^\infty(B_R(x_i))} + ||\omega_2||_{L^\infty(B_R(x_i))} \geq c_0 > 0
$$
for some $i$. But $(\bar{\omega_1}(y), \bar{\omega_2}(y))=(\omega_1(y-x_i), \omega_2(y-x_i))$ converges uniformly in any compact set of $\R^{N}_+$ to a solution $(\Phi, \Psi)$ of
\begin{equation}\label{linear}
   \begin{cases}
   -\Delta \Phi = p V_{\frac{1}{\Lambda},0}^{p-1} \Phi \;\;\;  \hbox{in} \R^N_{+},\\
   -\Delta \Psi = q U_{\frac{1}{\Lambda},0}^{q-1} \Psi  \;\;\;  \hbox{in} \R^N_{+}.
   \end{cases}
\end{equation}
for some $\Lambda$. We extend $(\Phi, \Psi)$ to the entire $\R^N$ by letting
$$(\Phi(x',x_N), \Psi(x',x_N)) = (\Phi(x',-x_N), \Psi(x',-x_N)).$$
Then it follows that $(\Phi, \Psi)$ is perpendicular to the kernel of equation (\ref{linear}). So $\Phi=0, \Psi=0$. This is a contradiction.
\end{proof}

Now concerning the existence of equations (\ref{equ-reduction}), we have the following lemma:

\begin{lemma}\label{existence}
   There exists a $k_0 > 0$ and a constant $C>0$, independent of k, such that for any $k > k_0$ and any $h=(h_1, h_2) \in L^{\infty}(\Omega_{\epsilon}) \times L^{\infty}(\Omega_{\epsilon})$, problem (\ref{equ-reduction}) has a unique solution $\omega = (\omega_1,\omega_2)=L_k(h)$. Besides, we have
   \[
   ||L_k(h)||_{*} \leq C ||h||_{**},\;\;\; |c|\leq C||h||_{**} .
   \]
   Moreover, the map $L_k(h)$ is $C^1$ with respect to $\Lambda$.
\end{lemma}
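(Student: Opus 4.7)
The plan is to combine Lemma \ref{prioriestimate} with a Fredholm alternative in the Hilbert space $E$. Working inside the symmetric subspace $E \cap (H_s \times H_s)$, I would first apply the Riesz representation theorem to rewrite \eqref{equ-reduction} (with the scalar $c$ treated as an unknown Lagrange multiplier enforcing the orthogonality condition) as an abstract equation
\begin{equation*}
(I-K)(\omega_1,\omega_2) \;=\; \tilde h \;+\; c\,\xi \quad\text{in } E,
\end{equation*}
where $K : E\to E$ is the bounded operator characterised by
\begin{equation*}
\langle K(\omega_1,\omega_2),(\varphi_1,\varphi_2)\rangle_E = \int_{\Omega_\varepsilon} p(PV)^{p-1}\omega_2\varphi_1 + \int_{\Omega_\varepsilon} q(PU)^{q-1}\omega_1\varphi_2,
\end{equation*}
$\tilde h$ is the Riesz representative of the $L^2$-pairing with $(h_1,h_2)$, and $\xi$ is that of $(\sum_i Z_{i,1},\sum_i Z_{i,2})$.

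The central analytic point is the compactness of $K$. Since $\Omega_\varepsilon$ is bounded for each fixed $\varepsilon$, Rellich's theorem gives strong $L^2$ convergence of weakly convergent sequences in $H^1$; combined with the pointwise bounds on $(PV)^{p-1}$ and $(PU)^{q-1}$ supplied by Appendix A and a standard tail truncation, this shows that $K$ sends bounded sets to relatively compact sets. Hence $I-K$ is Fredholm of index zero on $E$, and invertibility on the closed subspace orthogonal to $\xi$ reduces to injectivity. Injectivity is exactly the content of Lemma \ref{prioriestimate}: any nontrivial element of the kernel would solve \eqref{equ-reduction} with $h=0$ while satisfying the orthogonality constraint, and the a priori estimate would then force $\|(\omega_1,\omega_2)\|_*=0$, a contradiction. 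This step produces the solution $\omega = L_k(h)$ together with the unique constant $c$.

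The quantitative bound $\|L_k(h)\|_* \leq C \|h\|_{**}$ then follows by a direct application of Lemma \ref{prioriestimate} to the solution just constructed: the Green representation argument of that lemma yields $\|\omega\|_* < \infty$ whenever $h \in L^\infty$, because $(PV)^{p-1}$ and $(PU)^{q-1}$ integrate against $|y-z|^{2-N}$ to produce precisely the weighted decay built into $\|\cdot\|_*$. The estimate $|c| \leq C \|h\|_{**}$ is then extracted by testing the equation against $Y_{i,1}$ and $Y_{i,2}$ and invoking the identity $c = o(\|\omega\|_*) + O(\|h\|_{**})$ already derived inside the proof of Lemma \ref{prioriestimate}. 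For the $C^1$ dependence on $\Lambda$, observe that $PU_{\Lambda,x_i}$, $PV_{\Lambda,x_i}$ and the kernel vectors $Z_{i,j},Y_{i,j}$ all depend smoothly on $\Lambda\in(\delta,\delta^{-1})$, so the map $(\Lambda,\omega,c)\mapsto L(\omega)-h-c(\sum_i Z_{i,1},\sum_i Z_{i,2})$ is of class $C^1$ with partial derivative in $(\omega,c)$ invertible by the Fredholm step above; the implicit function theorem then delivers the smoothness assertion.

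The main technical obstacle is uniformity of all constants in $k$ (for $k \geq k_0$) and in $\Lambda \in [\delta,\delta^{-1}]$. The Fredholm alternative alone does not yield uniform operator norms of $(I-K)^{-1}$; uniformity is instead hard-wired into the contradiction/compactness scheme of Lemma \ref{prioriestimate}, and it is precisely this mechanism that forces the restriction $k \geq k_0$. A secondary subtlety is that the orthogonality condition determines $c$ uniquely only once the pairing $\langle \sum_j Z_{j,1}, Y_{i,2}\rangle + \langle \sum_j Z_{j,2}, Y_{i,1}\rangle$ is shown to be uniformly bounded away from zero; this is supplied by Lemma \ref{LE-1} in the form of the quantity $\tilde c + o(1)$, and it guarantees that the finite-dimensional linear system for $c$ remains nondegenerate as $k\to\infty$.
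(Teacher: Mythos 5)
Your proposal follows essentially the same route as the paper: rewrite \eqref{equ-reduction} via Riesz representation as $(I-K)\omega=\tilde h$ in $E$ (the paper eliminates $c$ by testing against $e\in E$, which is your Lagrange-multiplier viewpoint), obtain compactness of $K$ from the compact embedding $E\hookrightarrow L^{2}\times L^{2}$, apply the Fredholm alternative, and get injectivity of $I-K$ from Lemma \ref{prioriestimate} with $h\equiv 0$; the bounds on $\|\omega\|_{*}$ and $|c|$ and the $C^{1}$ dependence on $\Lambda$ are handled as in the paper. The only cosmetic difference is that the paper leaves the passage from the qualitative a priori statement to the uniform estimate $\|L_k(h)\|_{*}\le C\|h\|_{**}$ implicit (a standard normalization/contradiction step), exactly as you do, so there is no substantive divergence.
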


\begin{proof}
   Note that problem (\ref{equ-reduction}) is equivalent to
   \begin{equation}\label{equivalent-equa}
      \langle (\omega_1 , \omega_2) , e \rangle_{E} = \langle ( p(PV)^{p-1}\omega_2 , q(PU)^{q-1} \omega_1 ) , e \rangle + \langle ( h_1 , h_2 ) , e \rangle, \;\; \forall e = (e_1,e_2) \in E.
   \end{equation}
    As the right side of (\ref{equivalent-equa}) defines two bounded operator on $E$, by Reisz representation theorem, there are  $(T_1(\omega_1, \omega_2) , T_2(\omega_1, \omega_2)), (\widetilde{h_1}, \widetilde{h_2}) \in E$, such that
   \begin{equation}\label{T}
      \langle (T_1(\omega_1 , \omega_2), T_2(\omega_1, \omega_2)) , e \rangle_{E} = \langle ( p(PV)^{p-1}\omega_2 , q(PU)^{q-1} \omega_1 ) , e \rangle, \;\; \forall e= (e_1,e_2) \in E,
   \end{equation}
   and
   \begin{equation}\label{h}
      \langle (\widetilde{h_1}, \widetilde{h_2}) , e \rangle_{E} = \langle ( h_1, h_2 ) , e \rangle, \;\; \forall e = (e_1,e_2) \in E.
   \end{equation}
   As a result, we obtain an operator $(T_1, T_2)$ defined from $E$ to $E$ and we may rewrite equation (\ref{equivalent-equa}) in the operational form:
   \[
      (\omega_1 , \omega_2) = (T_1(\omega_1 , \omega_2), T_2(\omega_1, \omega_2)) + (\widetilde{h_1}, \widetilde{h_2}) \;\; \hbox{in} \;\; E.
   \]
    Choose $e_1 = 0$ or $e_2 = 0$ in (\ref{T}), we can deduce that
    \[
      \langle T_1(\omega_1 , \omega_2) , e_1 \rangle_{H^1(\Omega_{\epsilon})} = \langle p(PV)^{p-1}\omega_2  , e_1 \rangle
    \]
    and
    \[
      \langle T_2(\omega_1 , \omega_2) , e_2 \rangle_{H^1(\Omega_{\epsilon})} = \langle q(PU)^{q-1}\omega_1  , e_2 \rangle.
    \]
   Thus, $T_1(\omega_1 , \omega_2) = T_1(\omega_2)$ depends only on $\omega_2$ and $T_2(\omega_1 , \omega_2) = T_2(\omega_1)$ depends only on $\omega_1$. From the construction of the operator $(T_1, T_2): E \longrightarrow E$, we can decompose it as follows:
   \begin{equation}\label{decompose}
      E \stackrel{i}{\hookrightarrow \hookrightarrow} L^{2}(\Omega_{\epsilon}) \times L^{2}(\Omega_{\epsilon}) \stackrel{P}{\longrightarrow} L^{2}(\Omega_{\epsilon}) \times L^{2}(\Omega_{\epsilon}) \stackrel{\iota}{\hookrightarrow} E^* \stackrel{\cong}{\longrightarrow} E,
   \end{equation}
   where $$i: E \longrightarrow L^{2}(\Omega_{\epsilon}) \times L^{2}(\Omega_{\epsilon}) $$ is inclusion, $$P :  L^{2}(\Omega_{\epsilon}) \times L^{2}(\Omega_{\epsilon}) \longrightarrow L^{2}(\Omega_{\epsilon}) \times L^{2}(\Omega_{\epsilon})$$ is defined by
   \[
   P(\omega_1, \omega_2) = (p(PV)^{p-1}\omega_2, q(PU)^{q-1}\omega_1) ,
   \]
   and $$\iota : L^{2}(\Omega_{\epsilon}) \times L^{2}(\Omega_{\epsilon}) \longrightarrow E^* $$ is inclusion defined by
   \[
       \iota(\omega_1 , \omega_2)(e_1,e_2)  =  \langle (\omega_1 , \omega_2), (e_1,e_2) \rangle, \;\; \forall (e_1,e_2) \in E.
   \]
   Here, $f(g)$ represents the action of the functional $f$ on the element $g$, where $f \in E^*$ and $g \in E$. Besides, the last isomorphism of (\ref{decompose}) is given by Reisz representation. This shows that the operator $(T_1, T_2)$ is compact since the first inclusion is compact and the other maps are continuous. Then, by Fredholm' alternative theorem, we deduce that equation (\ref{equivalent-equa}) has an unique solution $(\omega_1,\omega_2) \in E$ if and only if the homogenous problem
   \[
      (\omega_1, \omega_2) = (T_1(\omega_2) ,T_2(\omega_1) )
   \]
   has only zero solution. Hence we only need to consider the following problem:
  \begin{equation}\label{homogenous}
   \begin{cases}
      L(\omega_1,\omega_2) = c (\sum\limits_{i=1}^{k} Z_{i,1}, \sum\limits_{i=1}^{k} Z_{i,2})\;\;\; \hbox{in} \;\;\; \Omega_{\epsilon},\\
      \frac{\partial \omega_{1}}{\partial n}=0, \frac{\partial \omega_{2}}{\partial n}=0 \;\;\; \hbox{on} \;\;\; \partial\Omega_{\epsilon}, \\
      \omega_1, \omega_2 \in H_s ,\\
      \langle \sum\limits_{i=1}^{k} Z_{i,1}, \omega_1 \rangle + \langle \sum\limits_{i=1}^{k} Z_{i,2}, \omega_2 \rangle = 0.
   \end{cases}
  \end{equation}
  Suppose $(\omega_1,\omega_2)$ is a solution of (2.12). Then by using Lemma 2.1 with $h_k \equiv 0$, we obtain $(\omega_1,\omega_2)=0$. Thus, (\ref{homogenous}) has only trivial solution and our conclusion follows.
\end{proof}

Recall that we aim to  construct a solution of (\ref{equ-2}) with the form $(PU+\omega_1, PV + \omega_2)$, then we need to ensure that $(\omega_1, \omega_2)$ satisfies (\ref{op-1}). Consequently, we consider the following problem:
\begin{equation}\label{Reduction}
   \begin{cases}
      L(\omega_1,\omega_2) = l + N(\omega_1, \omega_2) + c (\sum\limits_{i=1}^{k} Z_{i,1}, \sum\limits_{i=1}^{k} Z_{i,2})\;\;\; \hbox{in} \;\;\; \Omega_{\epsilon},\\
      \dfrac{\partial \omega_{1}}{\partial n}=\dfrac{\partial \omega_{2}}{\partial n}=0 \;\;\; \hbox{on} \;\;\; \partial\Omega_{\epsilon}, \\
      \omega_1, \omega_2 \in H_s ,\\
      \langle ( \sum\limits_{i=1}^{k} Z_{i,1}, \sum\limits_{i=1}^{k} Z_{i,2} ) , ( \omega_1 ,\omega_2  ) \rangle = 0.
   \end{cases}
\end{equation}

We can use the contraction mapping theorem to prove the existence of the solution to problem (\ref{Reduction}) when $||(\omega_1, \omega_2)||_{*}$ is sufficiently small. For this purpose, we have the following lemma:

\begin{lemma}\label{N}
   Suppose $N\geq 5$ and $p$ satisfies condition (A), then we have
   \[
   ||N(\omega_1, \omega_2)||_{**}  \leq C ||(\omega_1, \omega_2)||_{*}^{min\{ p,2 \}}.
   \]
\end{lemma}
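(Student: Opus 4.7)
The plan is to bound $|N_1(\omega_2)|$ and $|N_2(\omega_1)|$ pointwise by Taylor expansion, then convert these into the weighted $\|\cdot\|_{**}$ norms using the $\omega$--decay built into $\|\cdot\|_*$, the bubble decay $PU(y)+PV(y)\leq C\sum_j(1+|y-x_j|)^{-(N-2)}$ from Lemma A.2, and the critical-hyperbola identity $\tfrac{N}{p+1}+\tfrac{N}{q+1}=N-2$. First, I would invoke the elementary inequality
\[
\bigl|(a+b)^s - a^s - s a^{s-1}b\bigr| \leq
\begin{cases}
C\bigl(a^{s-2}b^2+|b|^s\bigr),& s\geq 2,\\
C|b|^s,& 1<s<2,
\end{cases}
\]
with $(s,a,b)=(p,PV,\omega_2)$ for $N_1$ and $(s,a,b)=(q,PU,\omega_1)$ for $N_2$, splitting each $|N_j|$ into at most a pure-power term and a quadratic cross term.

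The key algebraic observation is that the critical hyperbola is equivalent to $\tfrac{pN}{p+1}-\tfrac{N}{q+1}=2=\tfrac{qN}{q+1}-\tfrac{N}{p+1}$. Consequently, for the pure-power piece $|\omega_2|^p$, the effective decay exponent satisfies
\[
p\Bigl(\tfrac{N}{p+1}+\tau\Bigr)-\Bigl(\tfrac{N}{q+1}+2+\tau\Bigr)=(p-1)\tau>0,
\]
and symmetrically the $|\omega_1|^q$ piece has excess $(q-1)\tau>0$ against the target $\tfrac{N}{p+1}+2+\tau$. For the quadratic cross piece $(PV)^{p-2}\omega_2^2$ (arising only when $p\geq 2$), a parallel computation gives excess $(p-2)(N-2)+\tfrac{3N}{p+1}-N+\tau$, which I would verify to be strictly positive throughout condition~(A) by using $\tau=\tfrac{N-3}{N-2}$; the same check applies to $(PU)^{q-2}\omega_1^2$ when $q\geq 2$.

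With these margins secured, I would apply Lemma A.4 to collapse each product of the form $\bigl(\sum_j(1+|y-x_j|)^{-\alpha}\bigr)^m$ produced in the previous step into a single weighted sum matching the target weight, yielding
\[
\|N_1(\omega_2)\|_{**,1}\leq C\bigl(\|\omega_2\|_{*,2}^p+\|\omega_2\|_{*,2}^2\bigr),\quad \|N_2(\omega_1)\|_{**,2}\leq C\bigl(\|\omega_1\|_{*,1}^q+\|\omega_1\|_{*,1}^2\bigr),
\]
keeping only the Taylor pieces that actually arise. Since $q\geq p$ (a consequence of the critical hyperbola together with $p\leq\tfrac{N+2}{N-2}\leq q$) and the lemma is applied in a small ball where $\|\omega\|_*\leq 1$, every exponent $r\in\{p,q,2\}$ satisfies $r\geq\min(p,2)$, so $\|\omega\|_*^r\leq\|\omega\|_*^{\min(p,2)}$, giving the stated bound $\|N(\omega_1,\omega_2)\|_{**}\leq C\|(\omega_1,\omega_2)\|_*^{\min(p,2)}$.

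The main obstacle I expect is the exponent accounting in the sum-collapse step, particularly for the quadratic cross terms and near the lower endpoint of condition~(A): the positive margin shrinks, and if $\tau$ were smaller than $\tfrac{N-3}{N-2}$ the algebraic excess could degenerate to zero for some admissible $(N,p)$. This is precisely why condition~(A) is imposed in the form stated and why $\tau$ is fixed at $\tfrac{N-3}{N-2}$.
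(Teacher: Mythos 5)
Your proposal matches the paper's proof in all essentials: the same Taylor splitting into a pure-power piece and (when $p>2$, resp.\ $q>2$) a quadratic cross piece, the same critical-hyperbola identity $\tfrac{pN}{p+1}-\tfrac{N}{q+1}=2$ producing the margin $(p-1)\tau$, and the same collapse of sum-products via a H\"older/Young split together with $\sum_{j}(1+|y-x_j|)^{-\tau}\leq C$ (you cite Lemma A.4, while the paper invokes Lemma B.1, but Lemma B.1 ultimately rests on the same $\varepsilon^\tau k=1$ computation). The only cosmetic deviation is that the paper handles the quadratic term by first bounding $(1+|y-x_j|)^{-(N-2)}\leq(1+|y-x_j|)^{-(\frac{N}{p+1}+\tau)}$ and reducing to the pure-power case, whereas you verify the raw exponent excess directly; both routes close.
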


\begin{proof}
   Recall that $N_1(\omega_2) = (PV + \omega_2)^{p} - (PV)^p - p (PV)^{p-1}\omega_2$. Thus, we have
\[ |N_1(\omega_2)| \leq
\begin{cases}
   C|\omega_2|^p, \;\;\; \hbox{if}\;\; p \leq 2 ;\\
   C(PV)^{p-2}\omega_2^2 + C|\omega_2|^p \;\; \hbox{if}\;\; p > 2.
\end{cases}
\]
If $p\leq 2$, let $x = \frac{1}{p}(\frac{N}{q+1} + 2 +\tau)$ and $y=\frac{N}{p+1} + \tau - x $. By using Young inequality, we obtain
\[
   \begin{split}
      |N_1(\omega_2)| & \leq C||\omega_2||_{*,2}^{p} \left( \sum\limits_{j=1}^{k} \dfrac{1}{(1+|y-x_j|)^{x+y}} \right)^p \\
      & \leq C ||\omega_2||_{*,2}^{p} \sum\limits_{j=1}^{k} \dfrac{1}{(1+|y-x_j|)^{xp}} \left( \sum\limits_{j=1}^{k} \dfrac{1}{(1+|y-x_j|)^{yp^{\prime}}} \right)^{\frac{p}{p^{\prime}}} \\
      & = C ||\omega_2||_{*,2}^{p} \sum\limits_{j=1}^{k} \dfrac{1}{(1+|y-x_j|)^{\frac{N}{q+1} + 2 +\tau}} \left( \sum\limits_{j=1}^{k} \dfrac{1}{(1+|y-x_j|)^{\tau}} \right)^{\frac{p}{p^{\prime}}} ,
   \end{split}
\]
where $p^{\prime}=\frac{p}{p-1}$ and the last equality holds once we observe that $yp^{\prime} = \tau$.

By using Lemma \ref{B1}, we have
\[
   \sum\limits_{j=1}^{k} \dfrac{1}{(1+|y-x_j|)^{\tau}} \leq C + C \sum\limits_{j=2}^{k} \dfrac{1}{(|x_1-x_j|)^{\tau}} \leq C.
\]
Hence,
\[
   |N_1(\omega_2)| \leq C||\omega_2||_{*,2}^{p} \sum\limits_{j=1}^{k} \dfrac{1}{(1+|y-x_j|)^{\frac{N}{q+1} + 2 +\tau}}.
\]
Then, we estimate $N_1(\omega_2)$ when $p>2$. Note that this case occurs only when $N=5$ and $p \in (2, \frac{7}{3} ]$. Therefore, we have $p-2<1$, and thus
\[
   \begin{split}
      |N_1(\omega_2)| & \leq C |PV|^{p-2}|\omega_2|^2 + C|\omega_2|^p \\
       & \leq C ||\omega_2||_{*,2}^{2} \left( \sum\limits_{j=1}^k \dfrac{1}{(1+|y-x_j|)^{N-2}} \right) ^{p-2} \left( \sum\limits_{j=1}^k \dfrac{1}{(1+|y-x_j|)^{\frac{N}{q+1}+\tau}} \right)^2 \\
       & + C||\omega_2||_{*,2}^{p} \sum\limits_{j=1}^{k} \dfrac{1}{(1+|y-x_j|)^{\frac{N}{q+1} + 2 +\tau}} \\
       & \leq C ||\omega_2||_{*,2}^{2} \left( \sum\limits_{j=1}^k \dfrac{1}{(1+|y-x_j|)^{\frac{N}{q+1}+\tau}} \right)^p + C||\omega_2||_{*,2}^{p} \sum\limits_{j=1}^{k} \dfrac{1}{(1+|y-x_j|)^{\frac{N}{q+1} + 2 +\tau}} \\
       & \leq C(||\omega_2||_{*,2}^{2} + ||\omega_2||_{*,2}^{p}) \sum\limits_{j=1}^{k} \dfrac{1}{(1+|y-x_j|)^{\frac{N}{q+1} + 2 +\tau}} .
   \end{split}
\]
Here we assume $||(\omega_1, \omega_2)||_{*}$ is small enough, then it follows that
\[
   ||N_1(\omega_2)||_{**,1} \leq C||\omega_2||_{*,2}^{min(p,2)}.
\]
Note that when $N\geq 5$ and $p$ satisfies condition (A), this is always true that $q\leq 3$. Therefore, interchanging the role of $p$ and $q$ yields the following result.
\[
   ||N_2(\omega_1)||_{**,2} \leq C||\omega_1||_{*,1}^{min(q,2)}.
\]
Given that $q \geq p$ and $||(\omega_1, \omega_2)||_{*}$ is sufficiently small, we can deduce the following conclusion:
\[
   ||N(\omega_1, \omega_2)||_{**} \leq C||(\omega_1,\omega_2)||_{*}^{min(p,2)}.
\]
\end{proof}

Next, we will proceed with the estimation of $||(l_1,l_2)||_{**}$.

\begin{lemma}\label{l}
   Suppose $N\geq 5$ and $p$ satisfies condition (A), then we have
   \[
   ||(l_1, l_2)||_{**} \leq C \epsilon^{\frac{1}{2} + \sigma}.
   \]
where $\sigma > 0$ is a fixed small constant.
\end{lemma}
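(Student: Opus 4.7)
My plan is to decompose $l_1 = (PV)^p - \sum_{j=1}^k V_{1/\Lambda,x_j}^p$ into an interaction error and a projection error. Writing $V := \sum_j V_{1/\Lambda,x_j}$ and $\Psi := \sum_j \psi_{\Lambda,x_j}$ so that $PV = V - \Psi$, I split
\[
l_1 \;=\; \bigl[(V-\Psi)^p - V^p\bigr] \;+\; \Bigl[V^p - \sum_{j=1}^k V_{1/\Lambda,x_j}^p\Bigr],
\]
the first bracket being the projection error and the second the interaction error; $l_2$ is then treated analogously with $U,q$ in place of $V,p$. The two sources of smallness to exploit are the pointwise bound $|\psi_{\Lambda,x_j}(y)| \leq C\varepsilon(1+|y-x_j|)^{-(N-3)}$ from Lemma A.2, and the separation $|x_i - x_j| \geq c\varepsilon^{-1/(N-2)}$ that follows from $\varepsilon = k^{-(N-2)/(N-3)}$.

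For the interaction piece I localize to the angular sectors $\Omega_i$ used in the proof of Lemma \ref{prioriestimate}, on each of which $V_{1/\Lambda,x_i}$ is the dominant bubble and $|y-x_j| \geq c|x_i-x_j|$ for $j\neq i$. I then apply the algebraic inequality
\[
\Bigl|\Bigl(\sum_j a_j\Bigr)^p - \sum_j a_j^p\Bigr| \;\leq\; C a_i^{p-1}\sum_{j\neq i}a_j \;+\; C\Bigl(\sum_{j\neq i}a_j\Bigr)^p
\]
(with the standard refinement adding mixed terms $a_i^{p-m}(\sum_{j\neq i}a_j)^m$ when $p>2$), insert $V_{1/\Lambda,x_j}(y) \leq C(1+|y-x_j|)^{-(N-2)}$, and borrow a factor $|x_i-x_j|^{-s}$ from the cross terms exactly as in Lemma \ref{prioriestimate}. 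The circular arrangement of the $x_j$ forces $\sum_{j\neq i}|x_i-x_j|^{-s} \leq C\varepsilon^{s/(N-2)}$, and condition (A) is precisely what allows $s$ to be chosen so that, after division by the target weight $\sum_j(1+|y-x_j|)^{-(N/(q+1)+2+\tau)}$, the surviving power of $\varepsilon$ is at least $\varepsilon^{1/2+\sigma}$.

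For the projection piece I use the mean value type inequality $|(V-\Psi)^p - V^p| \leq CV^{p-1}|\Psi| + C|\Psi|^p$ when $p\leq 2$, with an extra $CV^{p-2}\Psi^2$ term when $p>2$. Substituting the Lemma A.2 bound on $\Psi$ pulls out an overall factor $\varepsilon$, and matching the spatial weight of $V^{p-1}(1+|y-x_j|)^{-(N-3)}$ against the $\|\cdot\|_{**,1}$ norm via the critical hyperbola identity $(N-2)p = N-2 + N/(q+1)$ produces a genuinely $O(\varepsilon)$ contribution, comfortably below $\varepsilon^{1/2+\sigma}$.

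The main obstacle is the interaction piece of $l_2$: since $q \geq (N+2)/(N-2)$ may be substantially larger than $2$, the binomial expansion needs iterated mixed terms $U_{1/\Lambda,x_i}^{q-m}\bigl(\sum_{j\neq i}U_{1/\Lambda,x_j}\bigr)^m$, and the exponent bookkeeping against the $\|\cdot\|_{**,2}$ weight $(1+|y-x_j|)^{-(N/(p+1)+2+\tau)}$ is delicate because the bubble decay $(N-2)$ has to be reconciled with the target decay $N/(p+1)+2+\tau$ while still leaving a borrowed exponent $s$ with $s/(N-2) \geq 1/2+\sigma$ and a residual weight integrable in $y$. Condition (A) is engineered precisely so this balance succeeds. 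Once these pointwise estimates are assembled on each $\Omega_i$ and summed over $i$, one obtains $\|(l_1,l_2)\|_{**} \leq C\varepsilon^{1/2+\sigma}$.
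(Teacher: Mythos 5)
Your plan is essentially the paper's own argument: you split $l_1$ into a projection error (the $\psi$-terms, controlled by Lemma \ref{LemmaA-2}) and a pure bubble-interaction error, localize to the sectors $\Omega_i$, borrow factors $|x_1-x_j|^{-s}$ as in Lemma \ref{B2}, and sum via Lemma \ref{LemmaA-4}; the paper organizes this slightly differently (it expands $(PV)^p$ around the dominant projected bubble $PV_{\Lambda,x_1}$ in $\Omega_1$, so only $\psi_{\Lambda,x_1}$ appears), but the mechanism is identical, the decisive point being that condition (A) gives $\frac{Np}{q+1}-\tau>\frac{N-2}{2}$, whence $\sum_{j\ge 2}|x_1-x_j|^{-s}=O\big((\varepsilon k)^{s}\big)=O\big(\varepsilon^{s/(N-2)}\big)\le C\varepsilon^{\frac12+\sigma}$ for $s=\frac{Np}{q+1}-\tau$. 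One correction: the ``critical hyperbola identity'' you quote, $(N-2)p=N-2+\frac{N}{q+1}$, is false; the correct relation is $(N-2)p-\frac{Np}{q+1}=\frac{N}{q+1}+2$, and with it your projection-error step still works since $(N-2)(p-1)+(N-3)=(N-2)p+1>\frac{N}{q+1}+2+\tau$. Also the obstacle you flag for $l_2$ is not really there: under (A) one has $q\le 3$ (so at most one extra mixed term is needed), and since $q\ge p$ the analogous borrowed exponent $\frac{Nq}{p+1}-\tau$ is at least as large, so $l_2$ is handled exactly as $l_1$, which is what the paper does.
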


\begin{proof}
   Recall that
   \[
      \Omega_{j} = \{ y = (y^{\prime}, y^{\prime \prime}) \in \R^2 \times \R^{N-2}: \langle \dfrac{y^{\prime}}{|y^{\prime}|}, \dfrac{x_j}{|x_j|}\rangle \geq cos \dfrac{\pi}{k}  \}.
   \]
   Without loss of generality, we may assume $y \in \Omega_1$. Hence, we have $|y-x_j| \geq |y-x_1|$ and we need to estimate $||l_1||_{**,1}$ and $||l_2||_{**,2}$ respectively.

   Recall that $l = (l_1, l_2) = ( (PV)^p - \sum\limits_{j=1}^{k}V_{\frac{1}{\Lambda}, x_j}^p, (PU)^q - \sum\limits_{j=1}^{k}U_{\frac{1}{\Lambda}, x_j}^q  ) $.Therefore, when $N\geq 6$, we have
   \begin{equation}\label{l1}
      \begin{split}
         &\quad |(PV)^{p} -  \sum\limits_{j=1}^{k}V_{\frac{1}{\Lambda}, x_j}^p |\\
         &  = |(PV_{\Lambda,x_1} + \sum\limits_{j=2}^{k}PV_{\Lambda, x_j})^p - \sum\limits_{j=1}^{k}V_{\frac{1}{\Lambda}, x_j}^p| \\
         & = |PV_{\Lambda,x_1}^p + p PV_{\Lambda,x_1}^{p-1}\sum\limits_{j=2}^{k}(PV_{\Lambda,x_j}) + O((\sum\limits_{j=2}^{k} PV_{\Lambda,x_j})^p) - \sum\limits_{j=1}^{k} V_{\frac{1}{\Lambda},x_j}^p | \\
         & \leq C |(PV_{\Lambda,x_1}^p - V_{\frac{1}{\Lambda},x_1}^p )| + C |PV_{\Lambda,x_1}|^{p-1} \sum\limits_{j=2}^{k} |PV_{\Lambda,x_j}| \\
         & + C \sum\limits_{j=2}^{k} | PV_{\Lambda, x_j} |^p + C \left( \sum\limits_{j=2}^{k} |V_{\frac{1}{\Lambda},x_j}| \right) ^p \\
         & \leq C  \dfrac{1}{(1+|y-x_1|)^{(N-2)(p-1)}}  |\psi_{\Lambda,x_1} | +  C |\psi_{\Lambda,x_1}|^p  + C \sum\limits_{j=2}^{k} \left( \dfrac{1}{(1+|y-x_j|)^{N-2}} \right)^p \\
         & + C \dfrac{1}{(1+|y-x_1|)^{(N-2)(p-1)}} \sum\limits_{j=2}^{k} \dfrac{1}{(1+|y-x_j|)^{N-2}} .
      \end{split}
   \end{equation}
   Note that
   \[
      \begin{split}
      & \quad\dfrac{1}{(1+|y-x_1|)^{(N-2)(p-1)}} \sum\limits_{j=2}^{k} \dfrac{1}{(1+|y-x_j|)^{N-2}} \\
      & \leq C \sum\limits_{j=2}^{k} \dfrac{1}{|x_1-x_j|^{\frac{Np}{q+1}-\tau}} \left( \dfrac{1}{(1+|y-x_1|)^{(N-2)p - \frac{Np}{q+1}+ \tau}} + \dfrac{1}{(1+|y-x_j|)^{(N-2)p - \frac{Np}{q+1}+ \tau}}\right) \\
      & \leq C \dfrac{1}{(1+|y-x_1|)^{\frac{N}{q+1} + \tau +2}} \sum\limits_{j=2}^k \dfrac{1}{|x_1-x_j|^{\frac{Np}{q+1} - \tau}} \\
      & \leq C (\epsilon k)^{\frac{Np}{q+1} - \tau} \sum\limits_{j=1}^k\dfrac{1}{(1+|y-x_k|)^{\frac{N}{q+1} + \tau +2}} \\
      & \leq C \epsilon^{\frac{1}{2}+\sigma} \sum\limits_{j=1}^k\dfrac{1}{(1+|y-x_k|)^{\frac{N}{q+1} + \tau +2}},
   \end{split}
   \]
   where the last inequality holds since $\frac{Np}{q+1} - \tau > \frac{N-2}{2}$ when $p$ satisfies condition (A) .

   On the other hand, we have
   \[
      \begin{split}
         & \quad \dfrac{1}{(1+|y-x_j|)^{N-2}}  \leq \dfrac{1}{(1+|y-x_1|)^{\frac{N-2}{2}}}\dfrac{1}{(1+|y-x_j|)^{\frac{N-2}{2}}} \\
         & \leq C \dfrac{1}{|x_1- x_j|^{\frac{N}{q+1}-\frac{\tau}{p}}} \left(  \dfrac{1}{(1+|y-x_1|)^{N-2-\frac{N}{q+1}+\frac{\tau}{p}}} + \dfrac{1}{(1+|y-x_j|)^{N-2-\frac{N}{q+1}+\frac{\tau}{p}}}  \right) \\
         & \leq C \dfrac{1}{|x_1- x_j|^{\frac{N}{q+1}-\frac{\tau}{p}}} \dfrac{1}{(1+|y-x_1|)^{N-2-\frac{N}{q+1}+\frac{\tau}{p}}}.
      \end{split}
   \]
 Since
\[
   \left( \sum\limits_{j=2}^{k} \dfrac{1}{|x_1- x_j|^{\frac{N}{q+1}-\frac{\tau}{p}}} \right)^p \leq C\epsilon^{\frac{Np}{q+1}-\tau} k^p = C \epsilon^{\frac{Np}{q+1} - \tau(p+1)} \leq C \epsilon^{\frac{1}{2} + \sigma},
\]
we conclude
   \[
   \begin{split}
      \sum\limits_{j=2}^{k} \left( \dfrac{1}{(1+|y-x_j|)^{N-2}} \right)^p & \leq C \left( \sum\limits_{j=2}^{k} \dfrac{1}{|x_1- x_j|^{\frac{N}{q+1}-\frac{\tau}{p}}} \right)^p \dfrac{1}{(1+|y-x_1|)^{(N-2)p-\frac{Np}{q+1}+\tau}} \\
      & = C  \epsilon^{\frac{1}{2}+\sigma} \dfrac{1}{(1+|y-x_1|)^{\frac{N}{q+1}+\tau + 2}}.
   \end{split}
   \]
Besides, by using Lemma \ref{LemmaA-2}, we estimate the first and second term of on the right side of (\ref{l1}):
   \[
      \begin{split}
      \dfrac{1}{(1+|y-x_1|)^{(N-2)(p-1)}}  |\psi_{\Lambda,x_1} | & \leq C \dfrac{\epsilon^{\frac{Np}{q+1}-\tau}}{(1+|y-x_1|)^{(N-2)p-\frac{Np}{q+1}+\tau}} \\
      & \leq C \epsilon^{\frac{1}{2}+\sigma} \sum\limits_{j=1}^{k} \dfrac{1}{(1+|y-x_j|)^{\frac{N}{q+1}+2+\tau}}.
   \end{split}
   \]
   Similarly, we obtain
   \[
      |\psi_{\Lambda,x_1} |^p \leq C \epsilon^{\frac{1}{2}+\sigma} \sum\limits_{j=1}^{k} \dfrac{1}{(1+|y-x_j|)^{\frac{N}{q+1}+2+\tau}}.
   \]
   Thus, we have
   \[
   ||l_1||_{**,1} \leq C\epsilon^{\frac{1}{2}+\sigma} .
   \]
   When $N=5$ and $p \in (2,\frac{7}{3}]$, by the similar arguments, we have
   \[
   \begin{split}
     & \quad  |(PV)^{p} -  \sum\limits_{j=1}^{k}V_{\frac{1}{\Lambda}, x_j}^p | \\
     &  \leq C  \dfrac{1}{(1+|y-x_1|)^{(N-2)(p-1)}}  |\psi_{\Lambda,x_1} |+ C \dfrac{1}{(1+|y-x_1|)^{(N-2)(p-2)}}  |\psi_{\Lambda,x_1} |^2 \\
      & +  C |\psi_{\Lambda,x_1}|^p  + C \sum\limits_{j=2}^{k} \dfrac{1}{(1+|y-x_j|)^{(N-2)p}}   + C \dfrac{1}{(1+|y-x_1|)^{(N-2)(p-1)}} \sum\limits_{j=2}^{k} \dfrac{1}{(1+|y-x_j|)^{N-2}}\\
      &  + C \dfrac{1}{(1+|y-x_1|)^{(N-2)(p-2)}} \left( \sum\limits_{j=2}^{k} \dfrac{1}{(1+|y-x_j|)^{N-2}} \right)^2  \\
      & \leq C \epsilon^{\frac{1}{2}+\sigma} \sum\limits_{j=1}^{k} \dfrac{1}{(1+|y-x_j|)^{\frac{N}{q+1}+2+\tau}} + C \dfrac{1}{(1+|y-x_1|)^{(N-2)(p-2)}}  |\psi_{\Lambda,x_1} |^2 \\
      & + C \dfrac{1}{(1+|y-x_1|)^{(N-2)(p-2)}}  \sum\limits_{j=2}^{k} \dfrac{1}{(1+|y-x_j|)^{2(N-2)}}.
   \end{split}
   \]
   Note that
   \[
       \begin{split}
         \dfrac{1}{(1+|y-x_1|)^{(N-2)(p-2)}}  |\psi_{\Lambda,x_1} |^2 &  \leq C \dfrac{\epsilon^{\frac{Np}{q+1}-\tau}}{(1+|y-x_1|)^{(N-2)p-\frac{Np}{q+1}+\tau}} \\
         & \leq C \epsilon^{\frac{1}{2}+\sigma} \sum\limits_{j=1}^{k} \dfrac{1}{(1+|y-x_j|)^{\frac{N}{q+1}+2+\tau}},
       \end{split}
   \]
   and
   \[
      \begin{split}
         \dfrac{1}{(1+|y-x_1|)^{(N-2)(p-2)}} & \sum\limits_{j=2}^{k} \dfrac{1}{(1+|y-x_j|)^{2(N-2)}} \\
         & \leq C \sum\limits_{j=2}^{k} \dfrac{1}{|x_1-x_j|^{\frac{Np}{q+1}-\tau}} \dfrac{1}{(1+|y-x_1|)^{(N-2)p-\frac{Np}{q+1}+\tau}} \\
         & \leq C \epsilon^{\frac{1}{2}+\sigma} \sum\limits_{j=1}^{k} \dfrac{1}{(1+|y-x_j|)^{\frac{N}{q+1}+2+\tau}}.
      \end{split}
   \]
   Consequently, we  have
   \[
      ||l_1||_{**,1} \leq C\epsilon^{\frac{1}{2}+\sigma}.
   \]
   Similarly, we get
   \[
      ||l_2||_{**,2} \leq C\epsilon^{\frac{1}{2}+\sigma}.
   \]
   This completes the proof.
\end{proof}

After obtaining the estimates for $\|l\|_{**}$ and $\|N(\omega_1,\omega_2)\|_{**}$, we can utilize the contraction mapping theorem to establish the existence of the solution to equation (\ref{Reduction}) for sufficiently large value of $k$.

\begin{theorem}
   Suppose $N\geq 5$ and $p$ satisfies condition (A). Then, there exists a $k_0>0$, such that for each $k>k_0$, $\delta \leq \Lambda \leq \delta^{-1}$, where $\delta$ is a small fixed constant, equations (\ref{Reduction}) has an unique solution $(\omega_1,\omega_2)$, which satisfies
   \[
      ||(\omega_1,\omega_2)||_{*} \leq C\epsilon^{\frac{1}{2}+\sigma},
   \]
   where $\sigma > 0$ is a small constant. Besides, the map $\Lambda \rightarrow ((\omega_1(\Lambda),\omega_2(\Lambda)))$ is $C^1$.
\end{theorem}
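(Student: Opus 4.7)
The plan is to reformulate the problem \eqref{Reduction} as a fixed-point equation and then apply the contraction mapping principle on a suitable ball in the norm $\|\cdot\|_*$. Concretely, using Lemma \ref{existence}, the linearized operator is invertible on $E$, so \eqref{Reduction} is equivalent to finding a fixed point of the nonlinear map
\[
T(\omega_1,\omega_2) := L_k\bigl( l + N(\omega_1,\omega_2) \bigr),
\]
where $L_k$ is the operator furnished by Lemma \ref{existence} and $l$, $N$ are as in \eqref{op-3}, \eqref{op-4}. I would work in the closed ball
\[
\mathcal{B} := \Bigl\{ (\omega_1,\omega_2) \in E : \|(\omega_1,\omega_2)\|_{*} \leq C_0 \varepsilon^{\frac{1}{2}+\sigma} \Bigr\},
\]
for a constant $C_0$ to be fixed large.

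The first step is to verify that $T(\mathcal{B}) \subset \mathcal{B}$. By Lemma \ref{existence}, $\|T(\omega_1,\omega_2)\|_* \leq C\bigl( \|l\|_{**} + \|N(\omega_1,\omega_2)\|_{**}\bigr)$. Lemma \ref{l} controls the first term by $C\varepsilon^{\frac{1}{2}+\sigma}$, and Lemma \ref{N} gives $\|N(\omega_1,\omega_2)\|_{**} \leq C\|(\omega_1,\omega_2)\|_*^{\min\{p,2\}}$. Since $\min\{p,2\} > 1$ under condition (A), the nonlinear piece is of order $\varepsilon^{(\frac{1}{2}+\sigma)\min\{p,2\}}$, which is strictly smaller than $\varepsilon^{\frac{1}{2}+\sigma}$ for $k$ large. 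Taking $C_0$ sufficiently large and $k_0$ sufficiently large, one obtains $\|T(\omega_1,\omega_2)\|_* \leq C_0 \varepsilon^{\frac{1}{2}+\sigma}$, i.e.\ $T$ maps $\mathcal{B}$ into itself.

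The next step, and the main technical obstacle, is to establish that $T$ is a contraction. For two elements $(\omega_1^{(a)},\omega_2^{(a)}) \in \mathcal{B}$, $a=1,2$, we have
\[
\|T(\omega^{(1)}) - T(\omega^{(2)})\|_{*} \leq C \|N(\omega^{(1)}) - N(\omega^{(2)})\|_{**}.
\]
Using the mean value theorem applied to $s \mapsto (PV+s)^p$ and $s\mapsto (PU+s)^q$, and arguing exactly as in the proof of Lemma \ref{N} with $|\omega_2|^p$ replaced by $|\omega_2^{(1)}-\omega_2^{(2)}| \cdot \bigl(|\omega_2^{(1)}|^{p-1}+|\omega_2^{(2)}|^{p-1}\bigr)$ (and analogously for $N_2$), one obtains
\[
\|N(\omega^{(1)}) - N(\omega^{(2)})\|_{**} \leq C\bigl(\|\omega^{(1)}\|_*^{\min\{p,2\}-1} + \|\omega^{(2)}\|_*^{\min\{p,2\}-1}\bigr) \|\omega^{(1)}-\omega^{(2)}\|_*.
\]
Since the prefactor is $O\bigl(\varepsilon^{(\frac{1}{2}+\sigma)(\min\{p,2\}-1)}\bigr) = o(1)$ as $k \to \infty$, the map $T$ is a contraction on $\mathcal{B}$ for all $k \geq k_0$. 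The Banach fixed-point theorem then yields a unique solution $(\omega_1,\omega_2) \in \mathcal{B}$ of \eqref{Reduction}, with the stated bound.

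Finally, for the $C^1$ dependence on $\Lambda \in [\delta,\delta^{-1}]$, I would apply the implicit function theorem to the map
\[
F\bigl((\omega_1,\omega_2),\Lambda\bigr) := (\omega_1,\omega_2) - L_k\bigl(l(\Lambda) + N_\Lambda(\omega_1,\omega_2)\bigr),
\]
viewed as a map from $E \times [\delta,\delta^{-1}]$ into $E$. The operator $L_k$ is $C^1$ in $\Lambda$ by the final statement of Lemma \ref{existence}, while $l(\Lambda)$ and $N_\Lambda$ depend smoothly on $\Lambda$ through $PU_{\Lambda,x_j}, PV_{\Lambda,x_j}$. The linearization $\partial_\omega F$ at the fixed point equals $\mathrm{Id} - L_k\circ DN_\Lambda(\omega_1,\omega_2)$; since $DN_\Lambda$ is small in the operator norm on $\mathcal{B}$ (by the same Lipschitz estimate used above), $\partial_\omega F$ is invertible. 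The implicit function theorem therefore yields the $C^1$ regularity of $\Lambda \mapsto (\omega_1(\Lambda),\omega_2(\Lambda))$, completing the proof.
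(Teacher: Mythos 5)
Your proposal follows essentially the same route as the paper: rewrite \eqref{Reduction} as a fixed-point equation $(\omega_1,\omega_2) = L_k(N(\omega_1,\omega_2)) + L_k(l)$ via Lemma \ref{existence}, work in a ball of radius $O(\varepsilon^{1/2+\sigma})$ in $\|\cdot\|_*$, verify invariance using Lemmas \ref{N} and \ref{l}, and prove contraction via a Lipschitz estimate on $N$ (being mindful, as you indicate, of the extra $(PV)^{p-2}|\omega_2|$ term when $N=5$ and $p>2$). Your version is in fact a bit more careful in two places --- introducing the constant $C_0$ in the ball radius so the invariance step closes cleanly, and supplying the implicit function theorem argument for the $C^1$ dependence on $\Lambda$, which the paper states but does not spell out.
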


\begin{proof}
   Define
   \[
      E_{N} = \{ (u,v) \in E, ||(u,v)||_{*} \leq \epsilon^{\frac{1}{2}+\sigma} \}.
   \]
   In order to find a solution of (\ref{Reduction}) in $E_{N}$. We note that equation (\ref{Reduction}) is equivalent to
   \[
      (\omega_1,\omega_2) = A(\omega_1,\omega_2) : = L_k(N_1(\omega_2),N_2(\omega_1)) + L_k(l_1,l_2), \;\; \hbox{for} (\omega_1, \omega_2) \in E_N.
   \]
   Hence, it is sufficient to find a fixed point of the operator $A$ in the space $E_N$. The fixed point can be readily obtained if we can demonstrate that $A$ is a contraction map from $E_N$ to $E_N$, which we will prove in the following.

   First, we verify $A$ maps from $E_N$ to $E_N$. For any $(\omega_1,\omega_2) \in E_{N}$, by Lemma \ref{N} and Lemma \ref{l}, we have
   \begin{equation}\label{A}
     \begin{split}
      ||A(\omega_1,\omega_2)||_{*} & \leq ||L_k(N_1(\omega_2),N_2(\omega_1))||_* + ||L_k(l_1,l_2)||_* \\
      & \leq C ||(N_1(\omega_2),N_2(\omega_1))||_* + C ||(l_1,l_2)||_* \\
      & \leq C ||(\omega_1 , \omega_2)||_{*}^{min(p,2)} + C\epsilon^{\frac{1}{2}+\sigma} \\
      & \leq C \epsilon^{\frac{min(p,2)}{2}}  + C\epsilon^{\frac{1}{2}+\sigma} \leq  C\epsilon^{\frac{1}{2}+\sigma}.
     \end{split}
   \end{equation}
  Here $\sigma>0$ can be choosen arbitrarily small, and therefore, we have $A(\omega_1,\omega_2) \in E_N$.

   Next, we prove $A$ is a contraction map. For any $(\omega_1, \omega_2)$ and $(\phi_1,\phi_2) \in E_N$, we have
   \[
      \begin{split}
         ||A(\omega_1,\omega_2) - A(\phi_1,\phi_2)||_* & = ||L_k(N_1(\omega_2),N_2(\omega_1)) - L_k(N_1(\phi_2),N_2(\phi_1))||_*\\
         & \leq C||(N_1(\omega_2) - N_1(\phi_2) ,  N_2(\omega_1) - N_2(\phi_1))||_{**} .
      \end{split}
   \]
   When $N \geq 6$, $p$ satisfies condtion (A), we have
   \[
      |N_1^{\prime}(t)| \leq C |t|^{p-1}.
   \]
   Therefore,
   \[
      \begin{split}
         |N_1(\omega_2) - N_1(\phi_2)| &  \leq C (|\omega_2|^{p-1} + |\phi_2|^{p-1})|\omega_2 - \phi_2| \\
         & \leq C ( ||\omega_2||_{*,2}^{p-1} + ||\phi_2||_{*,2}^{p-1} )||\omega_2 - \phi_2||_{*,2} \left( \sum\limits_{j=1}^{k} \dfrac{1}{(1+|y-x_j|)^{\frac{N}{p+1}+\tau}}  \right)^p.
      \end{split}
   \]
   As is shown in Lemma \ref{N}, we have
   \[
      \left( \sum\limits_{j=1}^{k} \dfrac{1}{(1+|y-x_j|)^{\frac{N}{p+1}+\tau}}  \right)^p \leq C \sum\limits_{j=1}^{k} \dfrac{1}{(1+|y-x_j|)^{\frac{N}{q+1}+2+\tau}}.
   \]
   As a result, we obtain
   \[
      ||N_1(\omega_2) - N_1(\phi_2)||_{**,1} \leq \epsilon^{\sigma}||\omega_2-\phi_2||_{*,2}
   \]
   since $\epsilon$ can be chosen small enough.

   When $N=5$ and $p \in (2,\frac{7}{3}]$, we have
   \[
      | N_1^{\prime}(t) | \leq C(|PV|^{p-2}|t| + |t|^{p-1}).
   \]
   By using similar estimate as in the proof of Lemma \ref{l}, we have
   \[
      \begin{split}
         &\quad  |N_1(\omega_2) - N_1(\phi_2)|   \leq C ((PV)^{p-2}(| \omega_2 | + | \phi_2 | )  + |\omega_2|^{p-1} + |\phi_2|^{p-1})|\omega_2 - \phi_2| \\
         & \leq C (||\omega_2||_{*,2}+||\phi_2||_{*,2}) ||\omega_2 - \phi_2||_{*,2} \left( \sum\limits_{j=1}^{k} \dfrac{1}{(1+|y-x_j|)^{\frac{N}{p+1}+\tau}}  \right)^2 \\
         & \times \left( \left( \sum\limits_{j=1}^{k} \dfrac{1}{(1+|y-x_j|)^{N-2}} \right)^{p-2} +  \left( \sum\limits_{j=1}^{k} \dfrac{1}{(1+|y-x_j|)^{\frac{N}{p+1}+\tau}}  \right)^{p-2} \right)\\
         & \leq C \epsilon^{\frac{1}{2}+\sigma} ||\omega_2 - \phi_2||_{*,2} \sum\limits_{j=1}^k \dfrac{1}{(1+|y-x_j|)^{\frac{N}{q+1}+2+\tau}}.
      \end{split}
   \]
   Choosing $\epsilon$ small enough, we  have
   \[
      ||N_1(\omega_2) - N_1(\phi_2)||_{**,1} \leq \epsilon^{\sigma}||\omega_2-\phi_2||_{*,2}.
   \]
   Similarly, we also  have
   \[
      ||N_2(\omega_1) - N_2(\phi_1)||_{**,2} \leq \epsilon^{\sigma}||\omega_1-\phi_1||_{*,1}.
   \]
   Therefore, we obtain
   \[
   \begin{split}
      ||A(\omega_1,\omega_2) - A(\phi_1,\phi_2)||_* & \leq C||(N_1(\omega_2) - N_1(\phi_2) ,  N_2(\omega_1) - N_2(\phi_1))||_{**} \\
      & \leq C\epsilon^{\sigma} ||(\omega_1 - \phi_1, \omega_2-\phi_2)||_{*} \leq \dfrac{1}{2} |(\omega_1, \omega_2) - (\phi_1, \phi_2)||_{*}.
   \end{split}
   \]
   The last inequality holds if we choose $\epsilon = k^{-\frac{N-2}{N-3}}$ small enough.

   As a result, for large enough $k$, $A$ is a contraction map from $E_N$ to $E_N$ and it follows that there is a unique solution $(\omega_1.\omega_2) \in E_N$ of equation (\ref{Reduction}). Moreover, by (\ref{A}), we have
   \[
   ||(\omega_1,\omega_2)||_{*} = ||A(\omega_1,\omega_2)||_{*} \leq C \epsilon^{\frac{1}{2}+\sigma}.
   \]
\end{proof}

\section{Energy estimates}
In Section 2, we have demonstrated the existence and uniqueness of a solution $(\omega_1, \omega_2)$ to equation (\ref{Reduction}) for each fixed $\Lambda \in (\delta, \delta^{-1})$. Consequently, to find a solution to equation (\ref{op-1}), it suffices to select an appropriate $\Lambda$ such that the constant $c$ in equation (\ref{Reduction}) equals zero.

Note that the equation (\ref{equ-2}) possesses a variational structure that corresponds to the following associated energy functional:
\begin{equation}
   I(u,v)=\int_{\Omega_\varepsilon} (\nabla u \nabla v + \mu \varepsilon^2 uv) - \frac{1}{p+1} \int_{\Omega_\varepsilon} v^{p+1} -\frac{1}{q+1} \int_{\Omega_\varepsilon} u^{q+1}.
\end{equation}
Let
\[
    F(\Lambda) = I (PU+\omega_1(\Lambda), PV+\omega_2(\Lambda)) .
\]
Hence to find a critical point $\Lambda \in (\delta, \delta^{-1})$ of $F(\Lambda)$. Once such $\Lambda$ is found, $(PU+\omega_1(\Lambda), PV+\omega_2(\Lambda))$ is a solution of equations (\ref{equ-2}).

In the following, we will first estimate the energy of $(PU,PV)$ and then estimate $F(\Lambda)$.

For simplicity, denote $\lambda = \frac{1}{\varepsilon \Lambda}$ and $\widetilde{x}_j = \varepsilon x_j$. In order to estimate the energy of $(PU,PV)$, we need to following lemma.

\begin{lemma}\label{LE-1}
It holds that
$$
\int_{\Omega_\varepsilon} V_{\frac{1}{\Lambda},x_j}^{p+1} =\overline{A}_0 - \overline{A}_1 \gamma\Lambda \varepsilon + O(\varepsilon^{2-\sigma}),
$$
and
$$
\int_{\Omega_\varepsilon} U_{\frac{1}{\Lambda},x_j}^{q+1} =\overline{B}_0 - \overline{B}_1 \gamma\Lambda \varepsilon + O(\varepsilon^{2-\sigma}),
$$
where $\overline{A}_0, \overline{A}_1, \overline{B}_0, \overline{B}_1$ are some positive constants, and $\sigma > 0$ is a small constant.
\end{lemma}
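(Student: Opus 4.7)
The plan is to pull the bubble to the origin by rescaling, reducing each integral to one of $V_{1,0}^{p+1}$ (resp.\ $U_{1,0}^{q+1}$) over a rescaled domain, and then to expand that rescaled domain as a half-space plus a first-order boundary correction driven by the Hessian of a local boundary-graph parametrization. Concretely, the change of variables $y = x_j + \Lambda z$ combined with the definition in \eqref{6} gives
\[
\int_{\Omega_\varepsilon} V_{\frac{1}{\Lambda},x_j}^{p+1}\,dy \;=\; \int_{D_\varepsilon} V_{1,0}^{p+1}(z)\,dz, \qquad D_\varepsilon := \Lambda^{-1}(\Omega_\varepsilon - x_j),
\]
and the analogous identity for $U_{1,0}^{q+1}$. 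Under $y \mapsto \varepsilon y$, the origin of $D_\varepsilon$ corresponds to $\widetilde{x}_j = \varepsilon x_j \in \Gamma$, where the mean curvature of $\partial\Omega$ equals $\gamma$ by (H3).

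Next I would introduce local coordinates $(\xi',\xi_N)$ near $\widetilde{x}_j$ in which $\partial\Omega$ is the graph $\xi_N = \phi(\xi')$ with $\phi(0)=0$, $\nabla\phi(0)=0$, and $\mathrm{tr}\,D^2\phi(0) = -(N-1)\gamma$ with the appropriate orientation. After successively rescaling by $\varepsilon$ and then by $\Lambda$, the boundary $\partial D_\varepsilon$ near $0$ becomes the graph $z_N = \tfrac{\varepsilon\Lambda}{2} D^2\phi(0)(z',z') + O(\varepsilon^2 \Lambda^3 |z'|^3)$. I would then split $\int_{D_\varepsilon} = \int_{D_\varepsilon \cap B_R} + \int_{D_\varepsilon \setminus B_R}$ with $R = \varepsilon^{-\eta}$ for a small $\eta>0$; using the polynomial decay of $V_{1,0}$ from \cite{ref3}, both the tail $\int_{D_\varepsilon \setminus B_R}V_{1,0}^{p+1}$ and the half-space tail $\int_{\mathbb{R}^N_+ \setminus B_R}V_{1,0}^{p+1}$ are $O(\varepsilon^{2-\sigma})$ after choosing $\eta$ properly. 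Inside $B_R$ the boundary is a small graph perturbation of the tangent hyperplane, so
\[
\int_{D_\varepsilon \cap B_R} V_{1,0}^{p+1} \;=\; \int_{\mathbb{R}^N_+ \cap B_R} V_{1,0}^{p+1} \;+\; \int_{S_\varepsilon} V_{1,0}^{p+1},
\]
where $S_\varepsilon$ is the signed thin strip between $\{z_N = 0\}$ and $\partial D_\varepsilon$ of thickness $O(\varepsilon\Lambda|z'|^2)$. A Taylor expansion of $V_{1,0}^{p+1}(z', z_N)$ in $z_N$ about $0$ yields
\[
\int_{S_\varepsilon} V_{1,0}^{p+1} \;=\; \tfrac{\varepsilon\Lambda}{2}\int_{\mathbb{R}^{N-1}} V_{1,0}^{p+1}(z',0)\,D^2\phi(0)(z',z')\,dz' \;+\; O(\varepsilon^{2-\sigma}).
\]

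Invoking the radial symmetry of $V_{1,0}$ from \cite{ref2}, only the diagonal entries of $D^2\phi(0)$ survive, so the strip integral reduces to $-\tfrac{\varepsilon\Lambda\gamma}{2}\int_{\mathbb{R}^{N-1}} V_{1,0}^{p+1}(z',0)|z'|^2\,dz'$. Setting $\overline{A}_0 = \tfrac12 \int_{\mathbb{R}^N} V_{1,0}^{p+1}$ and $\overline{A}_1 = \tfrac12 \int_{\mathbb{R}^{N-1}} V_{1,0}^{p+1}(z',0)|z'|^2 dz'$ produces the first identity, and the identical argument with $q+1$ and $U_{1,0}$ in place of $p+1$ and $V_{1,0}$ gives the second identity. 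The main technical obstacle is balancing the truncation radius $R = \varepsilon^{-\eta}$ against the cubic remainder of $\phi$ and the polynomial (rather than exponential) tails of $V_{1,0}^{p+1}$ and $U_{1,0}^{q+1}$ so that every error collapses into a single $O(\varepsilon^{2-\sigma})$ bound; this forces a careful use of the sharp decay rates from \cite{ref3}, and is where condition $N \geq 5$ becomes relevant for absolute convergence of the weighted moments $\int V_{1,0}^{p+1}(z',0)|z'|^2\,dz'$ and $\int U_{1,0}^{q+1}(z',0)|z'|^2\,dz'$.
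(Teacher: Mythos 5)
Your proposal is correct and follows essentially the same route as the paper: rescale to bring the bubble to the origin, parametrize $\partial\Omega$ locally as a graph near $\widetilde{x}_j$, decompose the rescaled domain as a half-space plus a curvature-driven boundary strip, Taylor-expand in the normal direction, and use radial symmetry of $(U_{1,0},V_{1,0})$ to extract the mean curvature. The only cosmetic differences are that you rescale all the way to the fixed profile $V_{1,0}$ (letting the domain $D_\varepsilon$ carry the $\varepsilon$-dependence) and truncate at a growing radius $R=\varepsilon^{-\eta}$, whereas the paper keeps the $\lambda$-dependent bubble on a fixed neighborhood $B_\delta(\widetilde x_j)$; and you should be careful to pin down the orientation convention for $\phi$ (the paper uses $\Omega\cap B_\delta=\{y_N>f(y')\}$ with $\mathrm{tr}\,D^2 f=(N-1)\gamma$, which forces the strip contribution to enter with an explicit minus sign) so that the final sign of the $\gamma\Lambda\varepsilon$ term comes out unambiguously.
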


\begin{proof}
We have
\begin{equation}\label{en-1}
\begin{split}
\int_{\Omega_\varepsilon} V_{\frac{1}{\Lambda},x_j}^{p+1} =& \int_{\Omega} V_{\frac{1}{\varepsilon \Lambda}, \varepsilon x_j}^{p+1} \\
=& \int_{\Omega \cap B_\delta(\varepsilon x_j)} V_{\lambda, \widetilde{x}_j}^{p+1} + \int_{\Omega \setminus B_\delta(\varepsilon x_j)} V_{\lambda, \widetilde{x}_j}^{p+1} \\
=&  \int_{\Omega \cap B_\delta(\widetilde{x}_j)} V_{\lambda, \widetilde{x}_j}^{p+1} + O\left( \int_{\Omega_\lambda \setminus B_{\delta \lambda} (0) } \frac{1}{(1+|y|)^{(N-2)(p+1)}}  \right) \\
=& \int_{\Omega \cap B_\delta(\widetilde{x}_j)} V_{\lambda, \widetilde{x}_j}^{p+1} + O\left( \frac{1}{\lambda^{(N-2)p-2}} \right).
\end{split}
\end{equation}
For simplicity, we denote $x := \widetilde{x}_j $. Note that $x \in \Gamma \subset \partial \Omega$, then for $\delta > 0$ small, we have

$$
\Omega \cap B_\delta(x) = \{ y=(y',y_N)\in \mathbb R^{N-1} \times \mathbb R \; : \; |y-x| < \delta, \; y_N > f(y')  \},
$$
where
    \begin{equation}\label{en-2}
        f(y')=x_N + \frac{1}{2}\sum\limits_{i=1}^{N-1} k_i (y_i -x_i)^2 + O\left( |y'-x'|^3 \right),
    \end{equation}
and $k_i$ is the principal curvature at $x,\; i=1,\cdots,N-1$. Besides, the mean curvature at $x$ is defined as the mean value of the principal curvature at $x$:
    \begin{equation}\label{en-3}
        H(x) =\frac{1}{N-1}\sum\limits_{i=1}^{N-1}k_i \equiv \gamma.
    \end{equation}
Then, we can divide the first integral of the right side of (\ref{en-1}) into three parts:
    \begin{equation}\label{en-4}
        \int_{\Omega \cap B_\delta(x)} V_{\lambda, x}^{p+1} = \int_{B_\delta^+(x)} V_{\lambda, x}^{p+1} - \int_{W'} V_{\lambda, x}^{p+1} + \int_{W''} V_{\lambda, x}^{p+1}: = I_1 + I_2 + I_3,
    \end{equation}
where
\[
    \begin{split}
        B_\delta^+(x):=&\{ y\; : \; |y-x|<\delta, y_N > x_N \}, \\
        W':=&\{ y\; : \; |y-x|<\delta, x_N < y_N < f(y')\}, \\
        W'':=&\{ y\; : \; |y-x|<\delta, f(y') < y_N < x_N\}.
    \end{split}
\]
Then,
    \begin{equation}\label{en-5}
        \begin{split}
           I_1 & = \frac{1}{2} \int_{B_\delta(x)} V_{\lambda, x}^{p+1} \\
           &  = \frac{1}{2} \int_{B_{\lambda \delta}(0)} V_{1,0}^{p+1} \\
           & = \frac{1}{2} \int_{\mathbb R^N} V_{1,0}^{p+1} + O\left( \frac{1}{\lambda^{(N-2)p-2}} \right)\\
           & =  \int_{\mathbb R^N_+} V_{1,0}^{p+1} + O\left( \frac{1}{\lambda^{(N-2)p-2}} \right).
       \end{split}
    \end{equation}

We denote $B_{\delta}(x') :=\{ y\in \mathbb R^{N-1} \; : \; \sum\limits_{i=1}^{N-1} (y_i - x_i)^2  < \delta^2\}$, which is a ball in $\mathbb R^{N-1}$. Then, using \eqref{en-2}, we have
   \begin{equation}\label{en-6}
     \begin{split}
       &\int_{W'} V_{\lambda, x}^{p+1} -\int_{W'} V_{\lambda, x}^{p+1} = \int_{B_\delta(x')} dy' \int_{x_N}^{f(y')} V_{\lambda, x}^{p+1} dy_N \\
        =& \int_{B_\delta(x')} V_{\lambda,x'}^{p+1}(y') (f(y')-x_N)dy' + O\left( \int_{B_{\delta}(x')} \frac{\lambda^{N-1}}{ (1+\lambda |y'-x'|)^{(N-2)(p+1)-1} } \frac{\lambda^3(f(y')-x_N)^3}{(1+\lambda |y'-x'|)^3}       \right) \\
       =& \int_{B_{\delta\lambda}(0')} \lambda [(f(\lambda^{-1}y' + x')-x_N)] V_{1,0'}^{p+1} dy' + O\left( \int_{B_{\delta\lambda}(0')} \frac{\lambda^3\left[f(\lambda^{-1}y'+x')-x_N\right]^3}{(1+|y'|)^{(N-2)p+N}}       \right) \\
       =& \int_{B_{\delta\lambda}(0')} \frac{\lambda}{2} \left[ \sum\limits_{i=1}^{N-1} k_i \left(  \frac{y_i}{\lambda} \right)^2 + O \left( \left|  \frac{y'}{\lambda}  \right|^3    \right) \right] V_{1,0'}^{p+1} dy' + O\left( \int_{B_{\delta\lambda}(0')} \frac{\lambda^3 \left| \frac{y'}{\lambda}  \right|^6}{(1+|y'|)^{(N-2)p+N}}       \right) \\
      =& \frac{(N-1)H(x)}{2\lambda} \int_{B_{\delta\lambda}(0')} y_1^2 V_{1,0'}^{p+1} dy' + O\left( \frac{1}{\lambda^2} \int_{B_{\delta\lambda}(0')} \frac{|y'|^3}{(1+|y'|)^{(N-2)(p+1)}}  \right)   +O\left( \frac{1}{\lambda^3}\right) \\
      =& \frac{\gamma}{2\lambda} \int_{B_{\delta\lambda}(0')} |y'|^2 V_{1,0'}^{p+1} dy' + O\left( \frac{1}{\lambda^{2-\sigma}} \right) \\
      =& \frac{\gamma}{2\lambda} \int_{\mathbb R^{N-1}} |y'|^2 V_{1,0'}^{p+1} dy' + O\left( \frac{1}{\lambda^{2-\sigma}} \right).
      \end{split}
   \end{equation}
Combining \eqref{en-1}, \eqref{en-4}, \eqref{en-5} and \eqref{en-6}, we  obtain
\[
\begin{split}
\int_{\Omega_\varepsilon} V_{\frac{1}{\Lambda},x_j}^{p+1} &= \int_{\mathbb R^N_+} V_{1,0}^{p+1} - \frac{\gamma}{2\lambda} \int_{\mathbb R^{N-1}} |y'|^2 V_{1,0'}^{p+1} dy' + O\left( \frac{1}{\lambda^{2-\sigma}} \right) \\
&=\overline{A}_0 - \overline{A}_1 \gamma\Lambda \varepsilon + O(\varepsilon^{2-\sigma}).
\end{split}
\]
We can use a similar argument to give the estimate of $\displaystyle \int_{\Omega_\varepsilon} U_{\frac{1}{\Lambda},x_j}^{q+1}$, and the result thus follows.
\end{proof}

\begin{lemma}\label{LE-2}
It holds that
$$
\int_{\Omega_\varepsilon} V_{\frac{1}{\Lambda},x_j}^{p} \psi_{\Lambda,x_j} = -\overline{A}_3 \gamma \Lambda \varepsilon + O\left( \varepsilon^{2-\sigma} \right),
$$
and
$$
\int_{\Omega_\varepsilon} U_{\frac{1}{\Lambda},x_j}^{q} \varphi_{\Lambda,x_j} = -\overline{B}_3 \gamma \Lambda \varepsilon + O\left( \varepsilon^{2-\sigma} \right),
$$
where $\overline{A}_3, \overline{B}_3$ are some positive constants.

\end{lemma}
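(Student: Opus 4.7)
The strategy is to reduce $\int_{\Omega_\epsilon} V_{1/\Lambda,x_j}^{p}\psi_{\Lambda,x_j}$ to a boundary integral via integration by parts and then extract the $\epsilon\gamma$ contribution by Taylor-expanding $\partial\Omega_\epsilon$ near $x_j$, in the same spirit as the proof of Lemma \ref{LE-1}. The starting point is that $V_{1/\Lambda,x_j}^{p} = -\Delta U_{1/\Lambda,x_j}$ on $\mathbb R^N$, while the correction $\psi_{\Lambda,x_j} = PV_{\Lambda,x_j} - V_{1/\Lambda,x_j}$ satisfies
\[
-\Delta\psi = -\mu\epsilon^{2}\,PV\;\;\text{in }\Omega_\epsilon,\qquad \frac{\partial\psi}{\partial n} = -\frac{\partial V_{1/\Lambda,x_j}}{\partial n}\;\;\text{on }\partial\Omega_\epsilon.
\]

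Applying Green's second identity twice, I would obtain
\[
\int_{\Omega_\epsilon} V^{p}\psi \;=\; -\mu\epsilon^{2}\!\int_{\Omega_\epsilon} U\cdot PV \;-\;\int_{\partial\Omega_\epsilon} U\,\frac{\partial V}{\partial n}\;-\;\int_{\partial\Omega_\epsilon} \psi\,\frac{\partial U}{\partial n}.
\]
The first bulk term is $O(\epsilon^{2})$, since $\int U\cdot PV$ is bounded uniformly in $k$ by the pointwise decay estimates of Appendix~A. For the last boundary term, the Taylor expansion described below gives $|\partial U/\partial n|=O(\epsilon)$ on $\partial\Omega_\epsilon$, and Lemma~A.2 yields $|\psi_{\Lambda,x_j}(y)|\le C\epsilon(1+|y-x_j|)^{-(N-3)}$, so this piece contributes at most $O(\epsilon^{2-\sigma})$.

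The main step is the analysis of $-\int_{\partial\Omega_\epsilon} U\,(\partial V/\partial n)$. Centering coordinates at $\tilde x_j = \epsilon x_j\in\Gamma$ and using $H(\tilde x_j)=\gamma$, the rescaled boundary near $x_j$ reads $y_N = \tfrac{\epsilon}{2}\sum_{i=1}^{N-1} k_i y_i^{2} + O(\epsilon^{2}|y'|^{3})$. Since $V_{1/\Lambda,x_j}$ is radial about $x_j$, the outward normal $n\approx(\epsilon k_1 y_1,\ldots,\epsilon k_{N-1}y_{N-1},-1)$ gives
\[
\frac{\partial V}{\partial n}(y)\;=\;\frac{V'(|y|)}{|y|}\cdot\frac{\epsilon}{2}\sum_{i=1}^{N-1} k_i y_i^{2} \;+\; O(\epsilon^{2}).
\]
Using the symmetry identity $\int g(|y'|)y_i^{2}\,dy' = (N-1)^{-1}\int g(|y'|)|y'|^{2}\,dy'$, the boundary integral collapses to $\tfrac{\epsilon\gamma}{2}\int U_{1/\Lambda,0}(|y'|)V'_{1/\Lambda,0}(|y'|)|y'|\,dy'$ plus an $O(\epsilon^{2-\sigma})$ remainder; the piece outside $B_\delta(x_j)$ is polynomially negligible. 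A change of variables $z=y'/\Lambda$, combined with the critical-hyperbola identity $\tfrac{N}{p+1}+\tfrac{N}{q+1}=N-2$, produces exactly one factor of $\Lambda$ and defines the positive constant $\overline A_3$. The companion estimate for $\int U^{q}\varphi$ follows by interchanging the roles of $(U,V,p)$ and $(V,U,q)$.

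The main technical difficulty will lie in obtaining the sharp remainder $O(\epsilon^{2-\sigma})$ rather than a logarithmic $O(\epsilon^{2}\log(1/\epsilon))$: the cubic and higher remainder in the boundary's Taylor expansion interacts with the slow decay of $V'(|y|)$ to produce borderline-convergent integrals, so one must truncate the boundary parametrisation at $|y'|\sim\epsilon^{-\sigma}$ and estimate the tails using the decay bounds from Appendix~A to absorb the loss into the error term.
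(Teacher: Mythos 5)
Your approach is genuinely different from the paper's, and in broad outline it should work, but there are two points worth flagging.

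The paper's proof does \emph{not} do Green's identity directly on $\Omega_\varepsilon$. Instead it substitutes the expansion of $\psi_{\Lambda,x_j}$ established in Lemma \ref{LemmaA-1}, which approximates $\psi_{\Lambda,x_j}$ by $\varepsilon\Lambda^{1-N/(p+1)}\varphi_0((y-x_j)/\Lambda)$ for the explicit harmonic function $\varphi_0$ on $\mathbb R^N_+$ defined in \eqref{a-8}; after rescaling, the integral collapses to $\frac{1}{\lambda}\int_{\mathbb R^N_+}V_{1,0}^p\varphi_0$, and Green's identity is then applied on the \emph{flat} half-space, converting the integral to $\int_{\partial\mathbb R^N_+}U_{1,0}\,\partial_n\varphi_0$ with the boundary datum of \eqref{a-8} already in hand. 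In other words, the hard geometric work (Taylor expansion of $\partial\Omega$, separation into bulk and boundary corrections) is isolated in Lemma \ref{LemmaA-1} and reused, while Lemma \ref{LE-2} itself is a short consequence. Your route applies Green's identity on the curved domain $\Omega_\varepsilon$ first and Taylor-expands $\partial\Omega_\varepsilon$ near $x_j$ inline; this essentially redoes part of Lemma \ref{LemmaA-1} from scratch within this one proof. That is a legitimate alternative and does produce the same leading term (one can check that $\int_{\partial\Omega_\varepsilon}U\,\partial_n V\approx\frac{\varepsilon\gamma\Lambda}{2}\int_{\mathbb R^{N-1}}U_{1,0}V_{1,0}'\,|z|\,dz$, which indeed scales as $\Lambda^1$ via the critical hyperbola), but it is less efficient because the paper needs the expansion of Lemma \ref{LemmaA-1} elsewhere anyway.

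Two slips. First, you wrote $\psi_{\Lambda,x_j}=PV_{\Lambda,x_j}-V_{1/\Lambda,x_j}$, but the paper defines $\psi_{\Lambda,x_j}=V_{1/\Lambda,x_j}-PV_{\Lambda,x_j}$; carrying out your Green's identity consistently would therefore give $\int V^p\psi = +\overline A_3\gamma\Lambda\varepsilon+\cdots$, the opposite sign to the statement. Second, your worry about needing $O(\varepsilon^{2-\sigma})$ rather than $O(\varepsilon^2\log(1/\varepsilon))$ is unnecessary: since $\sigma>0$ is arbitrary but fixed, $\varepsilon^2|\ln\varepsilon|=O(\varepsilon^{2-\sigma})$ trivially; indeed the paper's $O(\varepsilon^{2-\sigma})$ is chosen precisely to absorb the $|\ln\varepsilon|^m$ factors coming from Lemmas \ref{LemmaA-1}--\ref{LemmaA-2}, so no special truncation at $|y'|\sim\varepsilon^{-\sigma}$ is needed.
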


\begin{proof}
Using Lemma \ref{LemmaA-1} and by the similar argument as in the proof of Lemma \ref{LE-1}, we have
\[
\begin{split}
  &\int_{\Omega_\varepsilon} V_{\frac{1}{\Lambda},x_j}^{p} \psi_{\Lambda,x_j} =\int_{\Omega_\varepsilon} V_{\frac{1}{\Lambda},x_j}^{p}  \varepsilon \Lambda^{1-\frac{N}{p+1}} \varphi_0\left(\frac{y-x_j}{\Lambda} \right) \\
  + &  O\left( \int_{\Omega_\varepsilon}
  \frac{1}{(1+|y-x_j|)^{(N-2)p}}  \left[ \frac{\varepsilon^{2}|\ln \varepsilon|^m}{  (1+| y-x_j|)^{N-4}    }  +  \varepsilon^{N-2} \right] \right) \\
  =&\int_{\Omega_\varepsilon} V_{\frac{1}{\Lambda},x_j}^{p}  \varepsilon \Lambda^{1-\frac{N}{p+1}} \varphi_0\left(\frac{y-x_j}{\Lambda} \right) + O\left( \varepsilon^{2-\sigma} \right) \\
  =& (\varepsilon \Lambda)^{1-\frac{N}{p+1}} \int_{\Omega} V_{\frac{1}{\varepsilon \Lambda},\varepsilon x_j}^{p} \varphi_0\left(\frac{y-\varepsilon x_j}{\varepsilon \Lambda} \right) + O\left( \varepsilon^{2-\sigma} \right) \\
  =& \lambda^{\frac{N}{p+1} - 1} \int_{\Omega} V_{\lambda,\widetilde{x}_j}^{p} \varphi_0 (\lambda (y-\widetilde{x}_j)) + O\left( \varepsilon^{2-\sigma} \right) \\
  =& \lambda^{\frac{N}{p+1} - 1} \int_{\Omega \cap B_\delta (\widetilde{x}_j)} V_{\lambda,\widetilde{x}_j}^{p} \varphi_0 (\lambda (y-\widetilde{x}_j)) + O\left( \varepsilon^{2-\sigma} \right) \\
  =& \frac{1}{\lambda} \int_{\mathbb R^N_+} V_{1,0}^{p} \varphi_0  + O\left( \varepsilon^{2-\sigma} \right) \\
  =& \frac{1}{\lambda} \int_{\mathbb R^N_+} (-\Delta U_{1,0} \varphi_0 + U_{1,0} \Delta \varphi_0) + O\left( \varepsilon^{2-\sigma} \right) \\
  =& \frac{1}{\lambda} \int_{\partial \mathbb R^N_+} \frac{\partial \varphi_0}{\partial n}   U_{1,0} + O\left( \varepsilon^{2-\sigma} \right) \\
  =& -\frac{1}{\lambda} \frac{N-2}{2} \sum\limits_{i=1}^{N-1} k_i \int_{\partial \mathbb R^N_+}  U_{1,0} \frac{y_i^2}{(1+|y|^2)^{\frac{N}{2}}} + O\left( \varepsilon^{2-\sigma} \right) \\
  =& -\frac{1}{\lambda} \frac{N-2}{2} \gamma \int_{\partial \mathbb R^N_+}  U_{1,0} \frac{|y|^2}{(1+|y|^2)^{\frac{N}{2}}} + O\left( \varepsilon^{2-\sigma} \right) \\
  =& -\overline{A}_3 \gamma \Lambda \varepsilon + O\left( \varepsilon^{2-\sigma} \right).
\end{split}
\]
Similarly, we can obtain
$$
\int_{\Omega_\varepsilon} U_{\frac{1}{\Lambda},x_j}^{q} \varphi_{\Lambda,x_j} = -\overline{B}_3 \gamma \Lambda \varepsilon + O\left( \varepsilon^{2-\sigma} \right).
$$
\end{proof}

\begin{lemma}\label{LE-3}
It holds that
\[
  \begin{split}
    &\int_{\Omega_\varepsilon} \left( \nabla PU_{\Lambda, x_j} \nabla PV_{\Lambda, x_j} + \mu \varepsilon^2  PU_{\Lambda, x_j} PV_{\Lambda, x_j} \right) \\
    =&  \frac{\overline{A}_0+\overline{B}_0}{2} + (\frac{\overline{A}_3+\overline{B}_3}{2} - \frac{\overline{A}_1+\overline{B}_1}{2}) \gamma \Lambda \varepsilon + O(\varepsilon^{2-\sigma}),
   \end{split}
\]
where the constant $\overline{A_i}$ and $\overline{B_i}$ are given in Lemma \ref{LE-1} and Lemma \ref{LE-2}.
\end{lemma}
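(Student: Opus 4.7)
My plan is to reduce the bilinear energy to integrals that have already been estimated in Lemmas \ref{LE-1} and \ref{LE-2}, using integration by parts against the equations defining the projections together with the Neumann boundary condition. Concretely, $(PU_{\Lambda,x_j},PV_{\Lambda,x_j})$ solves system \eqref{equ-3}, so integrating by parts once (with the boundary term vanishing by Neumann) gives
\begin{equation*}
\int_{\Omega_\varepsilon}\!\bigl(\nabla PU_{\Lambda,x_j}\!\cdot\!\nabla PV_{\Lambda,x_j}+\mu\varepsilon^{2}PU_{\Lambda,x_j}PV_{\Lambda,x_j}\bigr)=\int_{\Omega_\varepsilon}V_{\frac{1}{\Lambda},x_j}^{\,p}\,PV_{\Lambda,x_j},
\end{equation*}
and integrating by parts the other way gives the symmetric identity with $\int_{\Omega_\varepsilon}U_{\frac{1}{\Lambda},x_j}^{\,q}PU_{\Lambda,x_j}$. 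Averaging the two expressions puts the statement into a symmetric form that will match the right-hand side of the lemma.

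Next I would use the standard decomposition of the projections, writing $PU_{\Lambda,x_j}=U_{\frac{1}{\Lambda},x_j}-\varphi_{\Lambda,x_j}$ and $PV_{\Lambda,x_j}=V_{\frac{1}{\Lambda},x_j}-\psi_{\Lambda,x_j}$, where $\varphi_{\Lambda,x_j},\psi_{\Lambda,x_j}$ are the correction terms controlled by Lemma \ref{LemmaA-1} (these are the same $\varphi,\psi$ that already appear in Lemmas \ref{LE-1} and \ref{LE-2}). Substituting produces
\begin{equation*}
\tfrac{1}{2}\!\int_{\Omega_\varepsilon}\!V_{\frac{1}{\Lambda},x_j}^{\,p+1}-\tfrac{1}{2}\!\int_{\Omega_\varepsilon}\!V_{\frac{1}{\Lambda},x_j}^{\,p}\psi_{\Lambda,x_j}+\tfrac{1}{2}\!\int_{\Omega_\varepsilon}\!U_{\frac{1}{\Lambda},x_j}^{\,q+1}-\tfrac{1}{2}\!\int_{\Omega_\varepsilon}\!U_{\frac{1}{\Lambda},x_j}^{\,q}\varphi_{\Lambda,x_j}.
\end{equation*}
Each of the four integrals has already been computed: the first and third by Lemma \ref{LE-1} and the second and fourth by Lemma \ref{LE-2}. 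Adding them yields exactly
\begin{equation*}
\frac{\overline{A}_0+\overline{B}_0}{2}+\Bigl(\tfrac{\overline{A}_3+\overline{B}_3}{2}-\tfrac{\overline{A}_1+\overline{B}_1}{2}\Bigr)\gamma\Lambda\varepsilon+O(\varepsilon^{2-\sigma}).
\end{equation*}

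There is essentially no serious obstacle here: once one sees that the integration-by-parts trick reduces the bilinear form to the two integral types already analyzed, the proof is a bookkeeping exercise. The only place that requires a tiny bit of care is confirming that the averaging step is legitimate, i.e.\ that both integration-by-parts identities produce the same quantity (they must, by self-adjointness of $-\Delta+\mu\varepsilon^{2}$ with Neumann data) so that replacing either single expression by their average is harmless, and that the $O(\varepsilon^{2-\sigma})$ remainders coming from Lemmas \ref{LE-1} and \ref{LE-2} are preserved under this linear combination. Neither point is subtle, so the lemma follows directly.
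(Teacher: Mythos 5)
Your proposal matches the paper's proof essentially verbatim: the paper also writes the bilinear form as the symmetric average $\tfrac12\int[(-\Delta PU_{\Lambda,x_j}+\mu\varepsilon^2 PU_{\Lambda,x_j})PV_{\Lambda,x_j}+(-\Delta PV_{\Lambda,x_j}+\mu\varepsilon^2 PV_{\Lambda,x_j})PU_{\Lambda,x_j}]$, substitutes the defining equations \eqref{equ-3}, decomposes $PV=V-\psi$, $PU=U-\varphi$, and then invokes Lemmas \ref{LE-1} and \ref{LE-2}. The argument is correct and complete.
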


\begin{proof}
Using Lemma \ref{LE-1} and Lemma \ref{LE-2}, by directly calculating, we have
\[
   \begin{split}
     &\int_{\Omega_\varepsilon} \left( \nabla PU_{\Lambda, x_j} \nabla PV_{\Lambda, x_j} + \mu \varepsilon^2  PU_{\Lambda, x_j} PV_{\Lambda, x_j} \right) \\
     =&\frac{1}{2}\int_{\Omega_\varepsilon} \left[ \left( -\Delta PU_{\Lambda, x_j} + \mu \varepsilon^2  PU_{\Lambda, x_j} \right) PV_{\Lambda, x_j}
     + \left( -\Delta PV_{\Lambda, x_j} + \mu \varepsilon^2  PV_{\Lambda, x_j} \right) PU_{\Lambda, x_j}  \right]\\
     =& \frac{1}{2} \int_{\Omega_\varepsilon}\left[ V^p_{\frac{1}{\Lambda},x_j} PV_{\Lambda, x_j} + U^q_{\frac{1}{\Lambda},x_j} PU_{\Lambda, x_j}\right] \\
     =&  \frac{1}{2} \int_{\Omega_\varepsilon} \left[ V^p_{\frac{1}{\Lambda},x_j} (V_{\frac{1}{\Lambda}, x_j}-\psi_{\Lambda, x_j}) +
     U^q_{\frac{1}{\Lambda},x_j} (U_{\frac{1}{\Lambda}, x_j}-\varphi_{\Lambda, x_j}) \right] \\
     =& \frac{\overline{A}_0+\overline{B}_0}{2} + (\frac{\overline{A}_3+\overline{B}_3}{2} - \frac{\overline{A}_1+\overline{B}_1}{2}) \gamma \Lambda \varepsilon + O(\varepsilon^{2-\sigma}).
   \end{split}
\]
\end{proof}

\begin{lemma}\label{LE-4}
It holds that
$$
\frac{1}{p+1} \int_{\Omega_\varepsilon} (PV_{\Lambda, x_j} )^{p+1} = \frac{1}{p+1} \overline{A}_0 +\left( \overline{A}_3 -\frac{1}{p+1} \overline{A}_1 \right) \gamma \Lambda \varepsilon + O(\varepsilon^{2-\sigma}),
$$
and
$$
\frac{1}{q+1} \int_{\Omega_\varepsilon} (PU_{\Lambda, x_j} )^{q+1} = \frac{1}{q+1} \overline{B}_0 +\left( \overline{B}_3 -\frac{1}{q+1} \overline{B}_1 \right) \gamma \Lambda \varepsilon + O(\varepsilon^{2-\sigma}).
$$
\end{lemma}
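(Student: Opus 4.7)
The plan is to reduce Lemma \ref{LE-4} to Lemmas \ref{LE-1} and \ref{LE-2} via a first-order Taylor expansion of the projection. Writing $PV_{\Lambda,x_j}=V_{\frac{1}{\Lambda},x_j}-\psi_{\Lambda,x_j}$ and noting that condition (A) forces $p>1$ so that $p+1>2$, I would use the pointwise identity
\[
   (PV_{\Lambda,x_j})^{p+1}=V_{\frac{1}{\Lambda},x_j}^{p+1}-(p+1)V_{\frac{1}{\Lambda},x_j}^{p}\psi_{\Lambda,x_j}+R_j(y),
\]
with remainder controlled by $|R_j|\leq C\bigl(V_{\frac{1}{\Lambda},x_j}^{p-1}\psi_{\Lambda,x_j}^{2}+\psi_{\Lambda,x_j}^{p+1}\bigr)$. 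Dividing by $p+1$ and integrating over $\Omega_\varepsilon$, Lemma \ref{LE-1} gives $\tfrac{1}{p+1}\int V_{\frac{1}{\Lambda},x_j}^{p+1}=\tfrac{1}{p+1}\overline{A}_0-\tfrac{1}{p+1}\overline{A}_1\gamma\Lambda\varepsilon+O(\varepsilon^{2-\sigma})$, while Lemma \ref{LE-2} identifies $-\int V_{\frac{1}{\Lambda},x_j}^{p}\psi_{\Lambda,x_j}=\overline{A}_3\gamma\Lambda\varepsilon+O(\varepsilon^{2-\sigma})$. Summing these two contributions delivers exactly the asserted main term, provided the remainder term integrates to $O(\varepsilon^{2-\sigma})$.

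The technical core is therefore the bound $\int_{\Omega_\varepsilon}|R_j|=O(\varepsilon^{2-\sigma})$. For this I would invoke the asymptotic description of $\psi_{\Lambda,x_j}$ already used in Lemma \ref{LE-2}, which, via Lemma \ref{LemmaA-1}, yields the decay estimate
\[
   |\psi_{\Lambda,x_j}(y)|\leq \frac{C\varepsilon^{\sigma}}{(1+|y-x_j|)^{N-2-\sigma}}
\]
for arbitrary small $\sigma>0$ (with a harmless $|\ln\varepsilon|^m$ factor when $N=5$). Combined with $V_{\frac{1}{\Lambda},x_j}\leq C(1+|y-x_j|)^{-(N-2)}$, a direct exponent count shows that both $\int V_{\frac{1}{\Lambda},x_j}^{p-1}\psi_{\Lambda,x_j}^{2}$ and $\int\psi_{\Lambda,x_j}^{p+1}$ have convergent integrals under condition (A) and are each of size $\varepsilon^{2-\sigma}$; the convergence of these integrals is precisely what the lower bound on $p$ in (A) is engineered to ensure.

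The estimate for $(PU_{\Lambda,x_j})^{q+1}$ follows by the same expansion with $(U,\varphi,q)$ in place of $(V,\psi,p)$, invoking the companion formulas from Lemmas \ref{LE-1} and \ref{LE-2} for $\int U_{\frac{1}{\Lambda},x_j}^{q+1}$ and $\int U_{\frac{1}{\Lambda},x_j}^{q}\varphi_{\Lambda,x_j}$. I expect the only genuine obstacle to lie in the $N=5$ case, where the slower decay of $\varphi_0$ and the logarithmic correction in the expansion of the correctors make the remainder more delicate; however, because condition (A) forces $p\in(2,7/3]$ and hence $q\in[7/3,3]$, every exponent inequality needed for both integrability and the $\varepsilon^{2-\sigma}$ rate holds with margin, in line with the borderline computations already performed in Lemma \ref{LE-2}.
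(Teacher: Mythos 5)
Your proposal matches the paper's proof: the paper likewise writes $(PV_{\Lambda,x_j})^{p+1}=(V_{\frac{1}{\Lambda},x_j}-\psi_{\Lambda,x_j})^{p+1}$, expands to first order, reads off the main terms from Lemmas \ref{LE-1} and \ref{LE-2}, and absorbs the remainder $\int_{\Omega_\varepsilon} V_{\frac{1}{\Lambda},x_j}^{p-1}\psi_{\Lambda,x_j}^{2}+\int_{\Omega_\varepsilon}\psi_{\Lambda,x_j}^{p+1}$ into $O(\varepsilon^{2-\sigma})$ via the decay of $\psi_{\Lambda,x_j}$, exactly as you do. The only nitpick is that for the remainder you should use the full bound $|\psi_{\Lambda,x_j}(y)|\leq C\varepsilon|\ln\varepsilon|^{m}(1+|y-x_j|)^{-(N-3)}$ from Lemma \ref{LemmaA-2} (equivalently, take $\sigma$ close to $1$ in the traded form you quote), since with arbitrarily \emph{small} $\sigma$ that traded bound only yields $O(\varepsilon^{2\sigma})$ rather than the claimed $O(\varepsilon^{2-\sigma})$.
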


\begin{proof}
Using Lemma \ref{LE-1} and Lemma \ref{LE-2}, we obtain
\[
\begin{split}
&\frac{1}{p+1} \int_{\Omega_\varepsilon} (PV_{\Lambda, x_j} )^{p+1} \\
=& \frac{1}{p+1} \int_{\Omega_\varepsilon} ( V_{\frac{1}{\Lambda}, x_j} -\psi_{\Lambda, x_j}  )^{p+1} \\
=& \frac{1}{p+1} \int_{\Omega_\varepsilon} V_{\frac{1}{\Lambda}, x_j}^{p+1} - \int_{\Omega_\varepsilon} V_{\frac{1}{\Lambda}, x_j}^{p} \psi_{\Lambda, x_j}
+O\left(   \int_{\Omega_\varepsilon} V_{\frac{1}{\Lambda}, x_j}^{p-1}   \psi_{\Lambda, x_j}^2  + \int_{\Omega_\varepsilon}  \psi_{\Lambda, x_j}^{p+1} \right) \\
=& \frac{1}{p+1} \overline{A}_0 +\left( \overline{A}_3 -\frac{1}{p+1} \overline{A}_1 \right) \gamma \Lambda \varepsilon + O(\varepsilon^{2-\sigma}).
\end{split}
\]
The estimate of $\frac{1}{q+1}\displaystyle  \int_{\Omega_\varepsilon} (PU_{\Lambda, x_j} )^{q+1}$ can be obtained by using a similar arguments.

\end{proof}

\begin{lemma}\label{LE-5}
For $i \neq j$, it holds
$$
\int_{\Omega_\varepsilon} V_{\frac{1}{\Lambda},x_i}^{p}V_{\frac{1}{\Lambda},x_j} =\frac{\overline{A}_4\Lambda^{N-2}}{|x_i-x_j|^{N-2}} + O\left(\frac{1}{|x_i-x_j|^{(N-2)p-2}}\right),
$$
and
$$
\int_{\Omega_\varepsilon} U_{\frac{1}{\Lambda},x_i}^{q}U_{\frac{1}{\Lambda},x_j} =\frac{\overline{B}_4\Lambda^{N-2}}{|x_i-x_j|^{N-2}} + O\left(\frac{1}{|x_i-x_j|^{N}}\right),
$$
where $\overline{A}_4, \overline{B}_4$ are some positive constants.

\end{lemma}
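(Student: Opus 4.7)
The plan is to rescale centered at $x_i$ and apply the far-field asymptotics of the ground state. Under the change of variables $z = (y - x_i)/\Lambda$, the first integral becomes
\[
\int_{\widetilde\Omega_i} V_{1,0}(z)^p\, V_{1,0}(z + R_{ij})\, dz, \qquad R_{ij} := \frac{x_i - x_j}{\Lambda}, \quad \widetilde\Omega_i := \frac{\Omega_\varepsilon - x_i}{\Lambda},
\]
since $V_{\frac{1}{\Lambda},x_i}^p V_{\frac{1}{\Lambda},x_j}(y) = \Lambda^{-N} V_{1,0}(z)^p V_{1,0}(z + R_{ij})$ and $dy = \Lambda^N dz$. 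With $\Lambda \in [\delta,\delta^{-1}]$ and $|x_i - x_j| \to \infty$ as $k \to \infty$, the separation $|R_{ij}|$ is large, and one invokes the sharp asymptotics $V_{1,0}(w) = c_V |w|^{2-N} + O(|w|^{-N})$ as $|w| \to \infty$ from \cite{ref3}, the dipole term being absent by radial symmetry of $V_{1,0}$.

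Split the $z$-integration into the inner region $\{|z| \le |R_{ij}|/2\}$ and its complement. On the inner region, Taylor-expand $|z+R_{ij}|^{2-N} = |R_{ij}|^{2-N} + (2-N)|R_{ij}|^{-N}(R_{ij}\cdot z) + O(|z|^2 |R_{ij}|^{-N})$; the first-order term vanishes when integrated against the radial density $V_{1,0}^p$. Condition (A) guarantees $(N-2)p > N$, so $V_{1,0}^p \in L^1(\R^N)$ and $\int_{|z|>R} V_{1,0}^p = O(R^{N-(N-2)p})$, whence the zeroth-order term yields $c_V |R_{ij}|^{2-N} \int_{\R^N_+} V_{1,0}^p$ up to a tail of size $O(|R_{ij}|^{2-(N-2)p})$. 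On the outer region, the direct bound $V_{1,0}^p(z) \le C(1+|z|)^{-(N-2)p}$, split further into $\{|z+R_{ij}| \ge |R_{ij}|/4\}$ (where $V_{1,0}(z+R_{ij}) \le C|R_{ij}|^{2-N}$) and its complement (where the change of variable $w = z+R_{ij}$ together with $\int_{|w| \le |R_{ij}|/4} V_{1,0}(w)\, dw = O(|R_{ij}|^2)$ controls the factor $V_{1,0}(z+R_{ij})$), gives an overall $O(|R_{ij}|^{2-(N-2)p})$ contribution. This also absorbs the quadratic inner-region term, which is at worst of the same order since $(N-2)p \le N+2$ forbids $|z|^2 V_{1,0}^p$ from being globally integrable.

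For the leading constant, $\widetilde\Omega_i$ approximates the tangent half-space at $\varepsilon x_i$, with curvature-induced boundary deviation of order $\varepsilon\Lambda |z'|^2$ whose contribution is absorbed into the error; by radial symmetry of $V_{1,0}$, the half-space integral $\int_{\R^N_+} V_{1,0}^p = \tfrac12 \int_{\R^N} V_{1,0}^p$ is independent of the hyperplane's orientation, so $\overline{A}_4 := \tfrac{c_V}{2} \int V_{1,0}^p$ is positive and uniform in $i,j$. Collecting terms and substituting $|R_{ij}|^{2-N} = \Lambda^{N-2} |x_i - x_j|^{2-N}$ yields the first identity. The second identity follows from the same argument with $p$ replaced by $q$ and $V_{1,0}$ by $U_{1,0}$: since $p \le q$ and the critical hyperbola force $q \ge (N+2)/(N-2)$, we have $(N-2)q - 2 \ge N$, so both the inner tail and outer region contributions improve to $O(|R_{ij}|^{-N})$, giving the sharper error $O(|x_i - x_j|^{-N})$. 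The main technical obstacle is the outer-region subcase where $z + R_{ij}$ is near the origin, as $V_{1,0}(z + R_{ij})$ is not small there; this requires the auxiliary $O(|R_{ij}|^2)$ estimate on the local integral of $V_{1,0}$ to close the bound with the correct exponent.
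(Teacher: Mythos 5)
Your approach is the same as the paper's: rescale around $x_i$, decompose the domain into a region near $x_i$ (your inner region), one near $x_j$, and the far field (your two sub-cases of the outer region), and apply the far-field asymptotics of the radial ground state; the regions correspond exactly to the paper's $I_1$, $I_2$, $I_3$ with $d = \tfrac12|\widetilde x_i - \widetilde x_j|$.

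The one inexact step is the claim that the first-order Taylor term ``vanishes when integrated against the radial density $V_{1,0}^p$.'' The inner region is (approximately) a \emph{half}-ball, namely the intersection of $\{|z| \le |R_{ij}|/2\}$ with the half-space approximating $\widetilde\Omega_i$, not a full ball, and $x_j$ lies off the tangent hyperplane to $\partial\Omega_\varepsilon$ at $x_i$ by a distance $\sim \varepsilon|x_i-x_j|^2$ (a chord of $\Gamma$ is not tangent), so the normal component of $R_{ij}$ is nonzero; only the tangential first moments of $V_{1,0}^p$ cancel. The resulting term is $O(\varepsilon|x_i - x_j|^{2-N})$, which is harmless downstream (it sums to $O(\varepsilon^2)$ in Proposition \ref{prop-energy}), though for $p$ near $\tfrac{N+2}{N-2}$ and nearly antipodal $x_i,x_j$ it is not literally dominated by $|x_i-x_j|^{2-(N-2)p}$. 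The paper's own estimate for $I_2$ is loose in the same way. Similarly, your stated error for the $U$-integral inherits the paper's mild overstatement: Lemma \ref{L2} gives $U_{1,0}(r) = a r^{2-N} + O(r^{-(N-2+\kappa_0)})$ with $\kappa_0 = (N-2)p-N$ set by $p$, not $q$, so the natural inner-region error is $O(|x_i-x_j|^{2-(N-2)p})$ rather than $O(|x_i-x_j|^{-N})$. Neither remark changes the validity of the lemma as it is actually used.
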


\begin{proof}
We have
\begin{equation}\label{en-7}
    \begin{split}
      &\int_{\Omega_\varepsilon} V_{\frac{1}{\Lambda},x_i}^{p}V_{\frac{1}{\Lambda},x_j}  = \int_{\Omega} V_{\frac{1}{\varepsilon \Lambda}, \varepsilon x_i}^{p} V_{\frac{1}{\varepsilon \Lambda}, \varepsilon x_j} \\
      =&\int_{\Omega\setminus (B_d(\widetilde{x}_i) \cup B_d(\widetilde{x}_j) )} V_{\lambda, \widetilde{x}_i}^{p} V_{\lambda, \widetilde{x}_j}
      +\int_{\Omega \cap B_d(\widetilde{x}_i)} V_{\lambda, \widetilde{x}_i}^{p} V_{\lambda, \widetilde{x}_j}
      + \int_{\Omega \cap B_d(\widetilde{x}_j)} V_{\lambda, \widetilde{x}_i}^{p} V_{\lambda, \widetilde{x}_j} \\
      :=&I_1+I_2+I_3,
    \end{split}
\end{equation}
where we choose $d=\frac{1}{2} |\widetilde{x}_i - \widetilde{x}_j|$.

We first estimate $I_1$:
\begin{equation}\label{en-8}
\begin{split}
I_1 =& \int_{\Omega\setminus (B_d(\widetilde{x}_i) \cup B_d(\widetilde{x}_j) )} V_{\lambda, \widetilde{x}_i}^{p} V_{\lambda, \widetilde{x}_j} \\
\leq & C\lambda^N \int_{\Omega\setminus (B_d(\widetilde{x}_i) \cup B_d(\widetilde{x}_j) )} \left( \frac{1}{(\lambda |y-\widetilde{x}_i|)^{(N-2)(p+1)}}  +  \frac{1}{(\lambda |y-\widetilde{x}_j|)^{(N-2)(p+1)}}  \right) \\
\leq & \frac{C}{(\lambda d)^{(N-2)p-2}} \leq  \frac{C}{|x_i-x_j|^{(N-2)p-2}}.
\end{split}
\end{equation}

As for $I_2$, we have:
\begin{equation}\label{en-9}
\begin{split}
I_2 =& \int_{\Omega \cap B_d(\widetilde{x}_i)} V_{\lambda, \widetilde{x}_i}^{p} V_{\lambda, \widetilde{x}_j} = \int_{\widetilde{\Omega}_\lambda \cap B_{\lambda d}(0)} V_{1, 0}^{p} V_{1, \lambda (\widetilde{x}_j - \widetilde{x}_i)} \\
=& \frac{1}{\lambda^{N-2} |\widetilde{x}_j - \widetilde{x}_i|^{N-2}} \int_{\widetilde{\Omega}_\lambda \cap B_{\lambda d}(0)} V_{1, 0}^{p}
+O\left( \frac{1}{\lambda^{N} |\widetilde{x}_j - \widetilde{x}_i|^{N}} \int_{\widetilde{\Omega}_\lambda \cap B_{\lambda d}(0)} V_{1, 0}^{p} \right) \\
=& \frac{\Lambda^{N-2}}{|x_j-x_i|^{N-2}} \int_{\widetilde{\Omega}_\lambda \cap B_{\lambda d}(0)} V_{1, 0}^{p}
+O\left( \frac{1}{|x_j - x_i|^{N}} \int_{\widetilde{\Omega}_\lambda \cap B_{\lambda d}(0)} V_{1, 0}^{p} \right) \\
=& \frac{\Lambda^{N-2}}{|x_j-x_i|^{N-2}} \int_{\mathbb R^N_+} V_{1, 0}^{p}  + +O\left( \frac{1}{|x_j - x_i|^{N}} \right) .\\
\end{split}
\end{equation}

Then, for $I_3$, we have
\begin{equation}\label{en-10}
\begin{split}
I_3 =& \int_{\Omega \cap B_d(\widetilde{x}_j)} V_{\lambda, \widetilde{x}_i}^{p} V_{\lambda, \widetilde{x}_j} \\
=& \int_{\widetilde{\Omega}_\lambda \cap B_{\lambda d}(0)}  V_{1, \lambda (\widetilde{x}_i - \widetilde{x}_j)}^p V_{1, 0}\\
\leq & \frac{C}{(\lambda |\widetilde{x}_i-\widetilde{x}_j|)^{(N-2)p}} \int_{ B_{\lambda d}(0)} \frac{1}{(1+|y|)^{N-2}} \\
\leq & \frac{C(\lambda d)^2}{(\lambda |\widetilde{x}_i-\widetilde{x}_j|)^{(N-2)p}} \\
\leq &\frac{C}{|x_i-x_j|^{(N-2)p-2}}.
\end{split}
\end{equation}
Combining \eqref{en-8}, \eqref{en-9} and \eqref{en-10} and note that $p$ satisfies condition (A), we obtain
\[
\begin{split}
\int_{\Omega_\varepsilon} V_{\frac{1}{\Lambda},x_i}^{p}V_{\frac{1}{\Lambda},x_j} =& \frac{\Lambda^{N-2}}{|x_j-x_i|^{N-2}} \int_{\mathbb R^N_+} V_{1, 0}^{p}
 + O\left(\frac{1}{|x_i-x_j|^{(N-2)p-2}}\right) \\
=& \frac{\overline{A}_4\Lambda^{N-2}}{|x_j-x_i|^{N-2}} + O\left(\frac{1}{|x_i-x_j|^{(N-2)p-2}}\right).
\end{split}
\]
Note that $q\geq \frac{N+2}{N-2}$, we can similarly prove the estimate of $\displaystyle\int_{\Omega_\varepsilon} U_{\frac{1}{\Lambda},x_i}^{q}U_{\frac{1}{\Lambda},x_j} $.

\end{proof}

\begin{lemma}\label{LE-6}
For $i \neq j$, we have
$$
   \int_{\Omega_\varepsilon} V_{\frac{1}{\Lambda},x_i}^{p} \psi_{\Lambda,x_j} = O\left( \frac{ \varepsilon |\ln \varepsilon|^m}{|x_i-x_j|^{N-3}} \right),
$$
and
$$
   \int_{\Omega_\varepsilon} U_{\frac{1}{\Lambda},x_i}^{q} \varphi_{\Lambda,x_j} = O\left( \frac{ \varepsilon |\ln \varepsilon|^m}{|x_i-x_j|^{N-3}} \right),
$$
where $m=1$ for $N=5$ and $m=0$ for $N \geq 6$.
\end{lemma}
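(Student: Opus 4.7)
The plan is to reduce Lemma \ref{LE-6} to a single convolution-type estimate. First, by Lemma A.2 I would use the pointwise bound
$$
|\psi_{\Lambda,x_j}(y)| \le \frac{C\varepsilon|\ln\varepsilon|^m}{(1+|y-x_j|)^{N-3}},
$$
where $m=1$ for $N=5$ and $m=0$ for $N\ge 6$ (the logarithm only appears in the marginal dimension, as already used implicitly in the proof of Lemma \ref{LE-2}). On the other hand, the scaling $V_{\frac{1}{\Lambda},x_i}(y)=\Lambda^{-N/(p+1)}V_{1,0}((y-x_i)/\Lambda)$, together with $\Lambda\in(\delta,\delta^{-1})$ and the standard asymptotic $V_{1,0}(z)\le C(1+|z|)^{-(N-2)}$, yields
$$
V_{\frac{1}{\Lambda},x_i}^{p}(y)\le \frac{C}{(1+|y-x_i|)^{(N-2)p}}.
$$
Multiplying these two bounds reduces the problem to showing
$$
\int_{\mathbb{R}^N}\frac{dy}{(1+|y-x_i|)^{(N-2)p}(1+|y-x_j|)^{N-3}}\;\le\;\frac{C}{|x_i-x_j|^{N-3}}.
$$

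The next step is the standard convolution-type inequality
$$
\int_{\mathbb{R}^N}\frac{dy}{(1+|y-x_i|)^{a}(1+|y-x_j|)^{b}}\le \frac{C}{|x_i-x_j|^{b}}, \qquad a>N,\ 0<b<N,
$$
applied with $a=(N-2)p$ and $b=N-3$. The hypothesis $b<N$ is immediate. The hypothesis $a>N$ is precisely where condition (A) enters: for $N=5$ it gives $p>2>5/3=N/(N-2)$, while for $N\ge 6$ it gives $p>(N+\tau)/(N-2)>N/(N-2)$; in both cases $(N-2)p>N$. I would prove the convolution inequality itself by splitting $\mathbb{R}^N$ into $B_{r/2}(x_i)$, $B_{r/2}(x_j)$ and their complement (with $r=|x_i-x_j|$). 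In $B_{r/2}(x_i)$ one uses $|y-x_j|\ge r/2$ to pull out the factor $r^{-b}$, and the remaining integral of $(1+|y-x_i|)^{-a}$ over the ball is uniformly bounded because $a>N$; this yields the dominant contribution $O(r^{-b})$. In the other two regions one checks a contribution of order $r^{N-a-b}$, which is smaller since $a>N$.

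The companion estimate for $\int U_{\frac{1}{\Lambda},x_i}^{q}\varphi_{\Lambda,x_j}$ is handled in the very same way: Lemma A.2 provides the analogous pointwise bound on $\varphi_{\Lambda,x_j}$, and the exponent condition $(N-2)q>N$ holds automatically since $q\ge (N+2)/(N-2)>N/(N-2)$, so the convolution estimate goes through verbatim with $a=(N-2)q$. The main technical point will be the sharp dependence on $r$ in the convolution inequality together with verifying that condition (A) indeed forces $a>N$ in every admissible regime; the $|\ln\varepsilon|^m$ factor is purely inherited from Lemma A.2 and does not interact with the spatial estimate.
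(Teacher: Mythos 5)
Your proposal is correct and follows essentially the same route as the paper: both start from the pointwise bound $|\psi_{\Lambda,x_j}|\le C\varepsilon|\ln\varepsilon|^m(1+|y-x_j|)^{-(N-3)}$ of Lemma \ref{LemmaA-2} together with the decay $V^p_{\frac{1}{\Lambda},x_i}\le C(1+|y-x_i|)^{-(N-2)p}$, and then estimate the resulting integral by splitting into balls of radius $\tfrac12|x_i-x_j|$ about $x_i$, $x_j$ and the complement, with condition (A) (resp. $q\ge\frac{N+2}{N-2}$) guaranteeing $(N-2)p>N$ (resp. $(N-2)q>N$) so the dominant contribution is $|x_i-x_j|^{-(N-3)}$. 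The only cosmetic difference is that the paper performs this splitting after rescaling back to the original domain $\Omega$, whereas you package it as a general convolution inequality applied directly in $\Omega_\varepsilon$.
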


\begin{proof}

Using Lemma \ref{LemmaA-2}, we have
\begin{equation}\label{en-11}
   \begin{split}
     &\int_{\Omega_\varepsilon} V_{\frac{1}{\Lambda},x_i}^{p} \psi_{\Lambda,x_j} \leq C \int_{\Omega_\varepsilon} \frac{1}{(1+|y-x_i|)^{(N-2)p}} \frac{\varepsilon |\ln \varepsilon|^m}{(1+|y-x_j|)^{N-3}} \\
     \leq & C \frac{|\ln \varepsilon|^m}{\varepsilon^{N-1}} \int_{\Omega} \frac{1}{(1+\varepsilon^{-1}|y-\varepsilon x_i|)^{(N-2)p}} \frac{1}{(1+\varepsilon^{-1}|y-\varepsilon x_j|)^{N-3}} \\
     \leq & C \left( \int_{\Omega\setminus (B_d(\widetilde{x}_i) \cup B_d(\widetilde{x}_j) )}
     + \int_{\Omega \cap B_d(\widetilde{x}_i)} + \int_{\Omega \cap B_d(\widetilde{x}_j)}   \right)  \frac{|\ln \varepsilon|^m \varepsilon^{1-N}}{(1+\varepsilon^{-1}|y-\widetilde{x}_i|)^{(N-2)p}} \frac{1}{(1+\varepsilon^{-1}|y- \widetilde{x}_j|)^{N-3}} \\
     := & I_1+I_2+I_3,
   \end{split}
\end{equation}
where $d=\frac{1}{2} |\widetilde{x}_i - \widetilde{x}_j|$.

First, we estimate $I_1$:
\begin{equation}\label{en-12}
\begin{split}
I_1 =&\frac{|\ln \varepsilon|^m}{\varepsilon^{N-1}} \int_{\Omega\setminus (B_d(\widetilde{x}_i) \cup B_d(\widetilde{x}_j) )}
\frac{1}{(1+\varepsilon^{-1}|y-\widetilde{x}_i|)^{(N-2)p}} \frac{1}{(1+\varepsilon^{-1}|y- \widetilde{x}_j|)^{N-3}} \\
\leq & \frac{C |\ln \varepsilon|^m}{\varepsilon^{N-1}}  \int_{\Omega\setminus (B_d(\widetilde{x}_i) \cup B_d(\widetilde{x}_j) )}
\left(  \frac{1}{ (\varepsilon^{-1} |y-\widetilde{x}_i|)^{(N-2)p+N-3}  } +    \frac{1}{ (\varepsilon^{-1} |y-\widetilde{x}_j|)^{(N-2)p+N-3}  }  \right) \\
\leq & \frac{C  |\ln \varepsilon|^m  \varepsilon^{(N-2)p-2}   }{d^{(N-2)p-3}}
\leq \frac{C\varepsilon |\ln \varepsilon|^m}{|x_i-x_j|^{(N-2)p-3}}.
\end{split}
\end{equation}

For $I_2$, we have
\begin{equation}\label{en-13}
\begin{split}
I_2 =& \frac{|\ln \varepsilon|^m}{\varepsilon^{N-1}} \int_{\Omega \cap B_d(\widetilde{x}_i)} \frac{1}{(1+\varepsilon^{-1}|y-\widetilde{x}_i|)^{(N-2)p}} \frac{1}{(1+\varepsilon^{-1}|y- \widetilde{x}_j|)^{N-3}} \\
= &  \varepsilon |\ln \varepsilon|^m \int_{\widetilde{\Omega}_\varepsilon \cap B_{\frac{d}{\varepsilon}}(0)}
\frac{1}{(1+|y|)^{(N-2)p}} \frac{1}{(1+|y+\varepsilon^{-1} \widetilde{x}_i - \varepsilon^{-1} \widetilde{x}_j|)^{N-3}}\\
\leq & \frac{C \varepsilon |\ln \varepsilon|^m}{ ( \varepsilon^{-1}|\widetilde{x}_i -\widetilde{x}_j|)^{N-3}} \leq  \frac{C \varepsilon |\ln \varepsilon|^m}{|x_i-x_j|^{N-3}}.
\end{split}
\end{equation}

Then, for $I_3$, we have
\begin{equation}\label{en-14}
\begin{split}
I_3 =& \frac{|\ln \varepsilon|^m}{\varepsilon^{N-1}} \int_{\Omega \cap B_d(\widetilde{x}_j)} \frac{1}{(1+\varepsilon^{-1}|y-\widetilde{x}_i|)^{(N-2)p}} \frac{1}{(1+\varepsilon^{-1}|y- \widetilde{x}_j|)^{N-3}} \\
=&\varepsilon |\ln \varepsilon|^m \int_{\widetilde{\Omega}_\varepsilon \cap B_{\frac{d}{\varepsilon}}(0)}
\frac{1}{(1+|y+\varepsilon^{-1} \widetilde{x}_j - \varepsilon^{-1} \widetilde{x}_i|)^{(N-2)p}} \frac{1}{(1+|y|)^{N-3}} \\
\leq & \frac{C\varepsilon |\ln \varepsilon|^m }{(\varepsilon^{-1} |\widetilde{x}_j - \widetilde{x}_i|)^{(N-2)p}} \int_{B_{\frac{d}{\varepsilon}}(0)} \frac{1}{(1+|y|)^{N-3}}  \\
\leq & \frac{C\varepsilon |\ln \varepsilon|^m (d \varepsilon^{-1})^3 }{ |x_j - x_i|^{(N-2)p}} \leq  \frac{C\varepsilon |\ln \varepsilon|^m}{|x_i-x_j|^{(N-2)p-3}}.
\end{split}
\end{equation}
Combining \eqref{en-12}, \eqref{en-13} and \eqref{en-14}, we obtain

$$
\int_{\Omega_\varepsilon} V_{\frac{1}{\Lambda},x_i}^{p} \psi_{\Lambda,x_j} = O\left( \frac{ \varepsilon |\ln \varepsilon|^m}{|x_i-x_j|^{N-3}} \right).
$$
The estimate of $\displaystyle\int_{\Omega_\varepsilon} U_{\frac{1}{\Lambda},x_i}^{q} \varphi_{\Lambda,x_j}$ can be obtained similarly.
\end{proof}

\begin{lemma}\label{LE-7}
For $i \neq j$, it holds that
\[
  \begin{split}
    &\int_{\Omega_\varepsilon} \left( \nabla PU_{\Lambda, x_i} \nabla PV_{\Lambda, x_j} + \mu \varepsilon^2  PU_{\Lambda, x_i} PV_{\Lambda, x_j} \right)\\
    =& \frac{\overline{A}_4+\overline{B}_4}{2} \frac{\Lambda^{N-2}}{|x_i-x_j|^{N-2}} + O\left( \frac{1}{|x_i-x_j|^{(N-2)p-2}} + \frac{\varepsilon |\ln \varepsilon|^m}{|x_i-x_j|^{N-3}}    \right).
   \end{split}
\]
\end{lemma}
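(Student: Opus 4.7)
The plan is to mirror the proof of Lemma \ref{LE-3}, symmetrizing the integral via integration by parts and then invoking Lemma \ref{LE-5} and Lemma \ref{LE-6} to handle the resulting pieces. First I would exploit the Neumann boundary condition $\partial_n PU_{\Lambda,x_i} = \partial_n PV_{\Lambda,x_j} = 0$ on $\partial\Omega_\varepsilon$, which makes the boundary terms vanish in Green's identity, to write
\[
\int_{\Omega_\varepsilon}\!\!\bigl( \nabla PU_{\Lambda,x_i}\!\cdot\!\nabla PV_{\Lambda,x_j} + \mu\varepsilon^2 PU_{\Lambda,x_i} PV_{\Lambda,x_j}\bigr)
= \tfrac{1}{2}\!\int_{\Omega_\varepsilon}\!\!\bigl[(-\Delta + \mu\varepsilon^2)PU_{\Lambda,x_i}\cdot PV_{\Lambda,x_j} + (-\Delta + \mu\varepsilon^2)PV_{\Lambda,x_j}\cdot PU_{\Lambda,x_i}\bigr].
\]

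Next I would substitute the defining equations \eqref{equ-3} for the projections, giving $(-\Delta + \mu\varepsilon^2)PU_{\Lambda,x_i} = V^p_{\frac{1}{\Lambda},x_i}$ and $(-\Delta + \mu\varepsilon^2)PV_{\Lambda,x_j} = U^q_{\frac{1}{\Lambda},x_j}$. Decomposing $PV_{\Lambda,x_j} = V_{\frac{1}{\Lambda},x_j} - \psi_{\Lambda,x_j}$ and $PU_{\Lambda,x_i} = U_{\frac{1}{\Lambda},x_i} - \varphi_{\Lambda,x_i}$, the quantity becomes
\[
\tfrac{1}{2}\!\int_{\Omega_\varepsilon}\!\bigl[V^p_{\frac{1}{\Lambda},x_i} V_{\frac{1}{\Lambda},x_j} + U^q_{\frac{1}{\Lambda},x_j} U_{\frac{1}{\Lambda},x_i}\bigr]
- \tfrac{1}{2}\!\int_{\Omega_\varepsilon}\!\bigl[V^p_{\frac{1}{\Lambda},x_i}\psi_{\Lambda,x_j} + U^q_{\frac{1}{\Lambda},x_j}\varphi_{\Lambda,x_i}\bigr].
\]

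Then I would apply Lemma \ref{LE-5} directly to the first bracket, which contributes the main order $\frac{\overline{A}_4 + \overline{B}_4}{2}\frac{\Lambda^{N-2}}{|x_i - x_j|^{N-2}}$ together with errors of size $O\bigl(|x_i-x_j|^{-((N-2)p-2)}\bigr) + O\bigl(|x_i-x_j|^{-N}\bigr)$. Under condition (A) the exponent $(N-2)p - 2$ is no larger than $N$, so the first of these dominates. For the second bracket, Lemma \ref{LE-6} yields the bound $O\bigl(\varepsilon|\ln\varepsilon|^m / |x_i-x_j|^{N-3}\bigr)$ for each term. Collecting everything gives exactly the stated expansion.

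There is no serious obstacle here, since the heavy lifting has already been done in Lemmas \ref{LE-5} and \ref{LE-6}; the only mild points to verify are that the Neumann condition kills the boundary term in Green's identity, and that the exponent comparison $(N-2)p - 2 \leq N$ holds throughout the range of $p$ permitted by condition (A), so that the $O(|x_i-x_j|^{-N})$ error from $\int U^q_{\frac{1}{\Lambda},x_j}U_{\frac{1}{\Lambda},x_i}$ is absorbed into the stated $O\bigl(|x_i-x_j|^{-((N-2)p-2)}\bigr)$ term.
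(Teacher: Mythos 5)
Your proof is correct and follows essentially the same route as the paper: symmetrize via Green's identity (the Neumann conditions kill the boundary terms), substitute the defining equations \eqref{equ-3}, split $PV_{\Lambda,x_j}=V_{\frac{1}{\Lambda},x_j}-\psi_{\Lambda,x_j}$ and $PU_{\Lambda,x_i}=U_{\frac{1}{\Lambda},x_i}-\varphi_{\Lambda,x_i}$, then apply Lemma \ref{LE-5} to the main terms and Lemma \ref{LE-6} to the cross error terms. Your extra observation that $(N-2)p-2\le N$ under condition (A) (since $p\le\frac{N+2}{N-2}$), so that the $O(|x_i-x_j|^{-N})$ error from the $U$-integral is absorbed into $O(|x_i-x_j|^{-((N-2)p-2)})$, is accurate and supplies a detail the paper leaves implicit.
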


\begin{proof}

Using Lemma \ref{LE-5} and Lemma \ref{LE-6}, we have
\[
\begin{split}
   &\int_{\Omega_\varepsilon} \left( \nabla PU_{\Lambda, x_i} \nabla PV_{\Lambda, x_j} + \mu \varepsilon^2  PU_{\Lambda, x_i} PV_{\Lambda, x_j} \right) \\
   =&\frac{1}{2} \int_{\Omega_\varepsilon} \left[\left( -\Delta  PU_{\Lambda, x_i} + \mu \varepsilon^2  PU_{\Lambda, x_i}   \right) PV_{\Lambda, x_j}
   + \left( -\Delta  PV_{\Lambda, x_j} + \mu \varepsilon^2  PV_{\Lambda, x_j}   \right) PU_{\Lambda, x_i}\right] \\
   =& \frac{1}{2} \int_{\Omega_\varepsilon} \left[ V^p_{\frac{1}{\Lambda},x_i} (V_{\frac{1}{\Lambda},x_j} -\psi_{\Lambda, x_j})
   +U^q_{\frac{1}{\Lambda},x_j} (U_{\frac{1}{\Lambda},x_i} -\varphi_{\Lambda, x_i}) \right]\\
   =& \frac{\overline{A}_4+\overline{B}_4}{2} \frac{\Lambda^{N-2}}{|x_i-x_j|^{N-2}} + O\left( \frac{1}{|x_i-x_j|^{(N-2)p-2}} + \frac{\varepsilon |\ln \varepsilon|^m}{|x_i-x_j|^{N-3}}    \right).
\end{split}
\]

\end{proof}

\begin{lemma}\label{LE-8}
For $i \neq j$, it holds that
$$
\int_{\Omega_\varepsilon} (PV_{\Lambda,x_i})^{p}PV_{\Lambda,x_j} =\frac{\overline{A}_4\Lambda^{N-2}}{|x_i-x_j|^{N-2}} + O\left(\frac{1}{|x_i-x_j|^{(N-2)p-2}} + \frac{\varepsilon |\ln \varepsilon|^{m(p+1)}}{|x_i-x_j|^{N-3}}\right),
$$
and
$$
\int_{\Omega_\varepsilon} (PU_{\Lambda,x_i})^{q}PU_{\Lambda,x_j} =\frac{\overline{B}_4\Lambda^{N-2}}{|x_i-x_j|^{N-2}} + O\left(\frac{1}{|x_i-x_j|^{N}} + \frac{\varepsilon |\ln \varepsilon|^{m(q+1)}}{|x_i-x_j|^{N-3}}\right).
$$
\end{lemma}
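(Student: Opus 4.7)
The plan is to reduce the estimate to Lemmas \ref{LE-5} and \ref{LE-6} by expanding the projections around the unperturbed bubbles. Writing $PV_{\Lambda,x_i} = V_{\frac{1}{\Lambda},x_i} - \psi_{\Lambda,x_i}$ and $PV_{\Lambda,x_j} = V_{\frac{1}{\Lambda},x_j} - \psi_{\Lambda,x_j}$, I carry out a pointwise Taylor expansion of $(V_{\frac{1}{\Lambda},x_i} - \psi_{\Lambda,x_i})^p$. When $p\leq 2$ (the case $N\geq 6$ under condition (A)) I use $(V-\psi)^p = V^p - pV^{p-1}\psi + O(|\psi|^p)$; when $p > 2$ (the case $N=5$) I keep one more term, $(V-\psi)^p = V^p - pV^{p-1}\psi + O(V^{p-2}\psi^2 + |\psi|^p)$. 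After multiplying by $PV_{\Lambda,x_j}$ and distributing, the integrand splits into a principal piece $V_{\frac{1}{\Lambda},x_i}^p V_{\frac{1}{\Lambda},x_j}$, first-order remainders carrying exactly one factor of $\psi$, and higher-order remainders with at least two such factors.

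For the principal piece, Lemma \ref{LE-5} yields directly $\overline{A}_4 \Lambda^{N-2}|x_i-x_j|^{-(N-2)} + O(|x_i-x_j|^{-((N-2)p-2)})$. The two first-order cross terms are $\int V_{\frac{1}{\Lambda},x_i}^p \psi_{\Lambda,x_j}$, which is exactly the quantity bounded in Lemma \ref{LE-6} and contributes $O(\varepsilon|\ln\varepsilon|^m/|x_i-x_j|^{N-3})$, and $\int V_{\frac{1}{\Lambda},x_i}^{p-1}\psi_{\Lambda,x_i} V_{\frac{1}{\Lambda},x_j}$, which has the same structure after interchanging the roles of $x_i$ and $x_j$. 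The latter is estimated by the same three-region split $\Omega_\varepsilon = (\Omega_\varepsilon \setminus (B_d(\widetilde x_i) \cup B_d(\widetilde x_j))) \cup (\Omega_\varepsilon \cap B_d(\widetilde x_i)) \cup (\Omega_\varepsilon \cap B_d(\widetilde x_j))$ with $d = \tfrac{1}{2}|\widetilde x_i - \widetilde x_j|$, substituting the pointwise bound $|\psi_{\Lambda,x_i}(y)| \leq C\varepsilon|\ln\varepsilon|^m(1+|y-x_i|)^{-(N-3)}$ from Lemma \ref{LemmaA-2}.

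The higher-order remainders contain products such as $V^{p-1}\psi^2$, $V^{p-2}\psi^2 V$ (appearing only for $N=5$), and $|\psi_{\Lambda,x_i}|^p |\psi_{\Lambda,x_j}|$. Bounding each factor of $\psi$ via Lemma \ref{LemmaA-2} and carrying out the same region split, every contribution is absorbed into the stated error; in particular, $\int|\psi_{\Lambda,x_i}|^p|\psi_{\Lambda,x_j}|$ produces $O(\varepsilon^{p+1}|\ln\varepsilon|^{m(p+1)}|x_i-x_j|^{-(N-3)})$, which accounts for the exponent $m(p+1)$ in the statement. The $PU$-identity is proved by the same argument with $(p,V,\psi)$ replaced by $(q,U,\varphi)$; the sharper first error $|x_i-x_j|^{-N}$ comes from Lemma \ref{LE-5} applied to $U$, since $(N-2)q-2\geq N$ follows from $q \geq \frac{N+2}{N-2}$. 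The main bookkeeping obstacle is verifying, for $p$ near the lower threshold of condition (A), that the distance powers appearing in the quadratic and higher $\psi$-remainders are strictly larger than $N-3$ so that they are genuinely negligible compared with the first-order term — this is precisely the constraint encoded by condition (A).
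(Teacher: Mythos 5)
Your decomposition mirrors the paper's: both write $PV_{\Lambda,x_i}=V_{\frac{1}{\Lambda},x_i}-\psi_{\Lambda,x_i}$, isolate $\int V_{\frac{1}{\Lambda},x_i}^p V_{\frac{1}{\Lambda},x_j}$ as the principal term (Lemma \ref{LE-5}), peel off the $\int V_{\frac{1}{\Lambda},x_i}^p \psi_{\Lambda,x_j}$ cross term (Lemma \ref{LE-6}), and treat the Taylor remainder of $(V-\psi)^p$ pointwise via Lemma \ref{LemmaA-2} and the three-region split. The paper is slightly more compact: it keeps $V_{\frac{1}{\Lambda},x_i}^p PV_{\Lambda,x_j}$ together, bounds $(PV_{\Lambda,x_i})^p - V_{\frac{1}{\Lambda},x_i}^p$ crudely by $O(V^{p-1}\psi_i + \psi_i^p)$ multiplied by $PV_{\Lambda,x_j}$ (valid for all $p>1$, so no separate case for $p>2$), and only expands $PV_{\Lambda,x_j}=V_{\frac{1}{\Lambda},x_j}-\psi_{\Lambda,x_j}$ at the very end to invoke Lemma \ref{LE-6}. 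Your fuller distribution of both factors produces more explicit terms but no new ideas.

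Two points in your write-up are slightly off, though neither is fatal. First, your claim that $\int|\psi_{\Lambda,x_i}|^p|\psi_{\Lambda,x_j}|$ gives a distance factor $|x_i-x_j|^{-(N-3)}$, and more generally that the higher-order $\psi$-remainders have distance powers ``strictly larger than $N-3$'', is not true uniformly: for $N=5$ and $p$ close to $2$, the three-region split for $\int (1+|y-x_i|)^{-(N-3)p}(1+|y-x_j|)^{-(N-3)}\,dy$ yields $|x_i-x_j|^{-(2p-3)}$, and $2p-3<N-3=2$. What actually rescues these terms is the extra prefactor $\varepsilon^{p+1}$ (versus $\varepsilon$ in the target error); using $|x_i-x_j|\le 2/\varepsilon$, the surplus $\varepsilon^p$ more than compensates for the weaker distance power. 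Second, attributing the exponent $m(p+1)$ to the $\psi_i^p\psi_j$ term is misleading: that term carries $\varepsilon^{p+1}$ and is negligible. In the paper the $|\ln\varepsilon|^{m(p+1)}$ arises from the crude bound $\psi_i^p PV_{\Lambda,x_j}\lesssim |\ln\varepsilon|^{mp}(1+|y-x_i|)^{-(N-2)p}\cdot\varepsilon|\ln\varepsilon|^m(1+|y-x_j|)^{-(N-3)}$ combined with the Lemma \ref{LE-6}-type integral; the exponent is generous, not sharp. Also, $\int V_{\frac{1}{\Lambda},x_i}^{p-1}\psi_{\Lambda,x_i}V_{\frac{1}{\Lambda},x_j}$ is not literally ``Lemma \ref{LE-6} with $x_i$ and $x_j$ swapped'' — the exponents $(N-2)p-1$ and $N-2$ differ from $(N-2)p$ and $N-3$ — but the same region-split machinery applies and gives $\min\{(N-2)p-3,\,N-2\}\ge N-3$ under condition (A), which is enough. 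With these clarifications the argument is sound and essentially coincides with the paper's.
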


\begin{proof}

Similar to the proof of Lemma \ref{LE-6}, we have
\begin{equation}
   \begin{split}
   &\int_{\Omega_\varepsilon} (PV_{\Lambda,x_i})^{p}PV_{\Lambda,x_j} \\
   =&\int_{\Omega_\varepsilon} V_{\frac{1}{\Lambda},x_i}^{p} PV_{\Lambda,x_j}
   +O\left( \int_{\Omega_\varepsilon} V_{\frac{1}{\Lambda},x_i}^{p-1} \psi_{\Lambda,x_i} PV_{\Lambda,x_j}  +
   \int_{\Omega_\varepsilon} \psi_{\Lambda,x_i}^p PV_{\Lambda,x_j}   \right) \\
   =&\int_{\Omega_\varepsilon} V_{\frac{1}{\Lambda},x_i}^{p} PV_{\Lambda,x_j}
   +O\left( \int_{\Omega_\varepsilon} \frac{|\ln \varepsilon|^{mp}}{(1+|y-x_i|)^{(N-2)p}}  \frac{\varepsilon |\ln \varepsilon|^{m}}{(1+|y-x_j|)^{N-3}}   \right) \\
   =& \int_{\Omega_\varepsilon} V_{\frac{1}{\Lambda},x_i}^{p} (V_{\frac{1}{\Lambda},x_j} - \psi_{\Lambda,x_j})
   +O\left(   \frac{\varepsilon |\ln \varepsilon|^{m(p+1)}}{|x_i-x_j|^{N-3}}    \right) \\
   =& \int_{\Omega_\varepsilon} V_{\frac{1}{\Lambda},x_i}^{p} V_{\frac{1}{\Lambda},x_j} + O\left(   \frac{\varepsilon |\ln \varepsilon|^{m(p+1)}}{|x_i-x_j|^{N-3}}    \right) \\
   =& \frac{\overline{A}_4\Lambda^{N-2}}{|x_i-x_j|^{N-2}} + O\left(\frac{1}{|x_i-x_j|^{(N-2)p-2}} + \frac{\varepsilon |\ln \varepsilon|^{m(p+1)}}{|x_i-x_j|^{N-3}}  \right).
\end{split}
\end{equation}
Moreover, the estimate of $\int_{\Omega_\varepsilon} (PU_{\Lambda,x_i})^{q}PU_{\Lambda,x_j}$ can be obtained by the same manner.

\end{proof}

Now we are readily to estimate the energy of $(PU,PV).$
\begin{proposition}\label{prop-energy}
We have
$$
I(PU,PV)=k\left( Q_0 -Q_1 \gamma \Lambda \varepsilon -Q_4 \Lambda^{N-2} \varepsilon +o(\varepsilon) \right),
$$
where $Q_0,Q_1,Q_4$ are positive constants, and $\gamma$ is the mean curvature of $\partial \Omega$ along $\Gamma$.
\end{proposition}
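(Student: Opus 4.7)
The plan is to expand $I(PU,PV)$ into one-bubble and two-bubble contributions, apply Lemmas \ref{LE-3}--\ref{LE-8} term by term, and then collect the leading-order powers of $\varepsilon$ by exploiting the rotational invariance of $\{x_{j}\}_{j=1}^{k}$ together with the scaling $\varepsilon=k^{-(N-2)/(N-3)}$.

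First I split the bilinear part by bilinearity into a diagonal sum $\sum_{j=1}^{k}D_{j}$, evaluated by Lemma \ref{LE-3}, and an off-diagonal sum $\sum_{i\ne j}C_{ij}$, evaluated by Lemma \ref{LE-7}. For the nonlinear part I use the pointwise identity
\[
(A+B)^{p+1}=A^{p+1}+(p+1)A^{p}B+R(A,B),\qquad |R|\le C\bigl(A^{p-1}B^{2}+B^{p+1}\bigr),
\]
with $A=PV_{\Lambda,x_{i}}$ and $B=\sum_{j\ne i}PV_{\Lambda,x_{j}}$ on each of the sectors $\Omega_{i}$ introduced in Section 2, on which $A$ dominates $B$. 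This produces
\[
\int_{\Omega_{\varepsilon}}(PV)^{p+1}=\sum_{i=1}^{k}\int_{\Omega_{\varepsilon}}(PV_{\Lambda,x_{i}})^{p+1}+(p+1)\sum_{i\ne j}\int_{\Omega_{\varepsilon}}(PV_{\Lambda,x_{i}})^{p}PV_{\Lambda,x_{j}}+\mathcal{R},
\]
with an identical identity for $(PU)^{q+1}$; Lemmas \ref{LE-4} and \ref{LE-8} then evaluate the leading pieces.

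Second, rotational invariance reduces every $\sum_{j=1}^{k}$ to $k$ copies of a single term and every $\sum_{i\ne j}$ to $k\sum_{j=2}^{k}$. The central combinatorial computation is
\[
\sum_{j=2}^{k}\frac{1}{|x_{1}-x_{j}|^{N-2}}=\varepsilon^{N-2}\sum_{j=1}^{k-1}\frac{1}{\bigl(2\sin(j\pi/k)\bigr)^{N-2}},
\]
using $|x_{1}-x_{j}|=\varepsilon^{-1}\cdot 2|\sin((j-1)\pi/k)|$. Because $N-2\ge 3$ for $N\ge 5$ the series $\sum_{j\ge 1}j^{-(N-2)}$ converges, and comparison with a Riemann sum shows the trigonometric sum equals $c_{0}k^{N-2}(1+o(1))$ for an explicit positive constant $c_{0}$. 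The identity $\varepsilon^{N-2}k^{N-2}=\varepsilon$ then yields a clean $c_{0}\varepsilon$ per bubble. The secondary error terms from Lemmas \ref{LE-5}--\ref{LE-8}, of the form $|x_{i}-x_{j}|^{-((N-2)p-2)}$ or $\varepsilon|\ln\varepsilon|^{m}|x_{i}-x_{j}|^{-(N-3)}$, sum to $o(k\varepsilon)$ because condition (A) forces $(N-2)p-2$ strictly above $N-2$.

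Collecting all leading terms yields the claimed expansion. The constant part is
\[
Q_{0}=\overline{A}_{0}\Bigl(\tfrac{1}{2}-\tfrac{1}{p+1}\Bigr)+\overline{B}_{0}\Bigl(\tfrac{1}{2}-\tfrac{1}{q+1}\Bigr)>0,
\]
each summand positive since $p,q>1$ (and the critical hyperbola \eqref{cri} gives the cleaner form $\tfrac{2}{N}\overline{A}_{0}$ once one uses $\overline{A}_{0}=\overline{B}_{0}$ coming from the radial ground state). The curvature coefficient is
\[
Q_{1}=\tfrac{\overline{A}_{3}+\overline{B}_{3}}{2}+\overline{A}_{1}\Bigl(\tfrac{1}{2}-\tfrac{1}{p+1}\Bigr)+\overline{B}_{1}\Bigl(\tfrac{1}{2}-\tfrac{1}{q+1}\Bigr)>0,
\]
obtained by subtracting the $\gamma\Lambda\varepsilon$ coefficients of Lemma \ref{LE-4} (weighted by $\tfrac{1}{p+1}$ and $\tfrac{1}{q+1}$) from those of Lemma \ref{LE-3}. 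Finally $Q_{4}=c_{0}\cdot\tfrac{\overline{A}_{4}+\overline{B}_{4}}{2}>0$, the minus sign in front of $Q_{4}\Lambda^{N-2}\varepsilon$ arising because the cross-term contributions of the nonlinear pieces enter with total coefficient $1$ (the factors $p+1$ and $q+1$ in the expansion canceling the prefactors $\tfrac{1}{p+1}$, $\tfrac{1}{q+1}$), dominating the coefficient $\tfrac{1}{2}$ inherited from the symmetrized bilinear pair. The main obstacle I anticipate is the careful control of $\mathcal{R}$ when $p$ is not an integer, in particular for $N=5$ and $p\in(2,\tfrac{7}{3}]$ where both $A^{p-1}B^{2}$ and $B^{p+1}$ must be tracked; this follows the sector-splitting strategy of Lemma \ref{l} together with the pointwise decay bounds of Lemmas \ref{LemmaA-2} and \ref{LemmaA-4}, and condition (A) is precisely what keeps the resulting interaction integrals $o(\varepsilon)$ per bubble.
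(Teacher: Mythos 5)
Your plan mirrors the paper's proof in essentially every respect: the same bilinearity split into diagonal and off-diagonal parts handled by Lemmas \ref{LE-3} and \ref{LE-7}, the same sector-by-sector Taylor expansion of $(PV)^{p+1}$ whose cross term loses the $\tfrac{1}{p+1}$ prefactor (which is exactly why the interaction enters with total weight $1$, beating the $\tfrac12$ from the symmetrized bilinear pair and giving the minus sign on $Q_{4}$), the same error control via condition (A) ensuring $(N-2)p-2>N-2$, and the same scaling identity $\varepsilon^{N-2}k^{N-2}=\varepsilon$ to convert the interaction sum into a clean $Q_{3}\varepsilon$. One small wording issue worth fixing: for $N\geq 5$ the phrase ``comparison with a Riemann sum'' is misleading, since the formal limit $\displaystyle\int_{0}^{1}(2\sin\pi t)^{-(N-2)}\,dt$ diverges; the correct mechanism, which you do gesture at via convergence of $\sum_{j\geq 1}j^{-(N-2)}$, is that the sum concentrates at the endpoints $j\in\{1,k-1\}$ where the small-angle approximation $\sin(j\pi/k)\approx j\pi/k$ yields $\sum_{j=1}^{k-1}(2\sin(j\pi/k))^{-(N-2)}=\bigl(2\zeta(N-2)/(2\pi)^{N-2}\bigr)k^{N-2}(1+o(1))$, exactly the refinement of Lemma \ref{LemmaA-4} that the paper invokes to define $Q_{3}$.
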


\begin{proof}
We have
\begin{equation}\label{en-p1}
  \begin{split}
   &I(PU,PV)\\
   =&\int_{\Omega_\varepsilon} (\nabla PU \nabla PV + \mu \varepsilon^2 PU \cdot PV) - \frac{1}{p+1} \int_{\Omega_\varepsilon} (PV)^{p+1} -\frac{1}{q+1} \int_{\Omega_\varepsilon} (PU)^{q+1}.
  \end{split}
\end{equation}

By symmetry and Lemma \ref{LE-3} and Lemma \ref{LE-7}, we have

\[
\begin{split}
  &\int_{\Omega_\varepsilon} (\nabla PU \nabla PV + \mu \varepsilon^2 PU \cdot PV)\\
  =& k \int_{\Omega_\varepsilon} \left( \nabla PU_{\Lambda, x_1}\nabla PV_{\Lambda, x_1} + \mu \varepsilon^2 PU_{\Lambda, x_1}PV_{\Lambda, x_1} \right) \\
  &+ k \int_{\Omega_\varepsilon} \sum\limits_{j=2}^k \left(  \nabla PU_{\Lambda, x_1}  \nabla PV_{\Lambda, x_j}  + \mu \varepsilon^2 PU_{\Lambda, x_1}PV_{\Lambda, x_j}  \right) \\
  =& k\left( \overline{D}_0 + (\overline{D}_3 - \overline{D}_1)\gamma \Lambda \varepsilon  + O(\varepsilon^{2-\sigma})    \right) \\
  &+k\left( \sum\limits_{j=2}^k \left[\frac{\overline{D}_4  \Lambda^{N-2} }{|x_1-x_j|^{N-2}}  +O\left( \frac{1}{|x_1-x_j|^{(N-2)p-2}} + \frac{\varepsilon |\ln \varepsilon|^m}{|x_1-x_j|^{N-3}}     \right) \right]     \right),
\end{split}
\]
where $\overline{D}_0 =\frac{\overline{A}_0+\overline{B}_0}{2},\;\overline{D}_1 =\frac{\overline{A}_1+\overline{B}_1}{2},\;\overline{D}_3 =\frac{\overline{A}_3+\overline{B}_3}{2},\;\overline{D}_4 =\frac{\overline{A}_4+\overline{B}_4}{2}$ are positive constants.

Using Lemma \ref{LemmaA-4}, we have
\begin{equation}\label{en-important}
   \begin{split}
     &\sum\limits_{j=2}^k \frac{1}{|x_1-x_j|^{(N-2)p-2}} =O\left(  (\varepsilon k)^{(N-2)p-2}  \right) = O\left( \varepsilon^{\frac{(N-2)p-2}{N-2}}    \right) = O\left( \varepsilon^{1+\sigma}  \right),\\
     &\sum\limits_{j=2}^k \frac{\varepsilon |\ln \varepsilon|^m}{|x_1-x_j|^{N-3}} =O\left( \varepsilon |\ln \varepsilon|^m (\varepsilon k)^{N-3} \ln k \right)
     =  O\left( \varepsilon^{1+\sigma}  \right).
   \end{split}
\end{equation}
Thus, it follows that
\begin{equation}\label{en-p2}
   \begin{split}
     &\int_{\Omega_\varepsilon} (\nabla PU \nabla PV + \mu \varepsilon^2 PU \cdot PV)\\
     =&k\left( \overline{D}_0 + (\overline{D}_3 - \overline{D}_1)\gamma \Lambda \varepsilon +   \sum\limits_{j=2}^k \frac{\overline{D}_4  \Lambda^{N-2} }{|x_1-x_j|^{N-2}}  + O\left( \varepsilon^{1+\sigma}  \right)   \right).
   \end{split}
\end{equation}

Next, we give the estimate of $\frac{1}{p+1} \displaystyle\int_{\Omega_\varepsilon} (PV)^{p+1}$. Recall that
$$
\Omega_j = \{ (y',y'')\in \Omega_\varepsilon \; : \; y' \in \mathbb R^2, \langle \frac{(y',0)}{|y'|}, \frac{x_j}{|x_j|}  \rangle \geq \cos \frac{\pi}{k} \},\;\; j=1,\cdots,k.
$$
Then, by symmetry, we have
\begin{equation}\label{en-p3}
  \begin{split}
    &\frac{1}{p+1}  \int_{\Omega_\varepsilon} (PV)^{p+1} = \frac{k}{p+1}  \int_{\Omega_1} (PV)^{p+1} \\
    =& \frac{k}{p+1} \int_{\Omega_1} (PV_{\Lambda,x_1})^{p+1} + k  \int_{\Omega_1} (PV_{\Lambda,x_1})^{p} \sum\limits_{j=2}^k PV_{\Lambda,x_j} + O\left( k \int_{\Omega_1} (PV_{\Lambda,x_1})^{p-1} \left( \sum\limits_{j=2}^k PV_{\Lambda,x_j} \right)^2 \right)\\
    :=& I_1 + I_2 +O(I_3).
  \end{split}
\end{equation}

For $I_1$, using Lemma \ref{LemmaA-2} and Lemma \ref{LE-4}, it follows that
\begin{equation}\label{en-p4}
  \begin{split}
    I_1=& \frac{k}{p+1} \int_{\Omega_1} (PV_{\Lambda,x_1})^{p+1} \\
    =& \frac{k}{p+1} \int_{\Omega_\varepsilon} (PV_{\Lambda,x_1})^{p+1} + O\left(  k \int_{\Omega_\varepsilon \setminus \Omega_1}  \frac{|\ln \varepsilon|^{m(p+1)}}{(1+|y-x_1|)^{(N-2)(p+1)}}   \right) \\
    =& \frac{k}{p+1} \int_{\Omega_\varepsilon} (PV_{\Lambda,x_1})^{p+1} + O\left( k |\ln \varepsilon|^{m(p+1)} (k\varepsilon)^{(N-2)(p+1)-N} \right) \\
    =& \frac{k}{p+1} \int_{\Omega_\varepsilon} (PV_{\Lambda,x_1})^{p+1} + O\left( k \varepsilon^{1+\sigma} \right) \\
    =& k \left[ \frac{1}{p+1} \overline{A}_0 + \left( \overline{A}_3 - \frac{1}{p+1} \overline{A}_1   \right)\gamma \Lambda \varepsilon + O\left(  \varepsilon^{1+\sigma} \right) \right],
  \end{split}
\end{equation}
where the third identity holds because $|y-x_1| \geq \frac{C}{\varepsilon} \sin \frac{\pi}{k} \geq \frac{C}{k\varepsilon}$ for $y\in \Omega_\varepsilon \setminus \Omega_1$.

As for $I_2$, we use Lemma \ref{LemmaA-2} to obtain
\begin{equation}\label{en-p5}
  \begin{split}
    I_2=& k  \int_{\Omega_1} (PV_{\Lambda,x_1})^{p} \sum\limits_{j=2}^k PV_{\Lambda,x_j} \\
    =& k \sum\limits_{j=2}^k \int_{\Omega_\varepsilon} (PV_{\Lambda,x_1})^{p}PV_{\Lambda,x_j} +
    O\left( k \sum\limits_{j=2}^k  \int_{\Omega_\varepsilon \setminus \Omega_1}  \frac{|\ln\varepsilon|^{m(p+1)}}{ (1+|y-x_1|)^{(N-2)p} (1+|y-x_j|)^{N-2} }   \right).
  \end{split}
\end{equation}
We write $\Omega_\varepsilon \setminus \Omega_1 = W_1 \cup W_2 \cup W_3$, where
\[
   \begin{split}
     &W_1 = (\Omega_\varepsilon \setminus \Omega_1) \cap B_d(x_j), \\
     &W_2 = (\Omega_\varepsilon \setminus \Omega_1) \cap B_d(x_1), \\
     &W_3 = (\Omega_\varepsilon \setminus \Omega_1) \setminus (B_d(x_j) \cup B_d(x_1)),
\end{split}
\]
and $d=\frac{1}{2}|x_1-x_j|$. Then, we have
\begin{equation}\label{en-p6}
   \begin{split}
     &\int_{W_3}  \frac{1}{ (1+|y-x_1|)^{(N-2)p} (1+|y-x_j|)^{N-2} } \\
     \leq & C \int_{W_3} \left[ \frac{1}{ (1+|y-x_1|)^{(N-2)(p+1)} } + \frac{1}{ (1+|y-x_j|)^{(N-2)(p+1)}}  \right]\\
     \leq & \frac{C}{d^{(N-2)p-2}} \leq \frac{C}{|x_1-x_j|^{(N-2)p-2}},
   \end{split}
\end{equation}
and
\begin{equation}\label{en-p7}
    \begin{split}
      &\int_{W_1}  \frac{1}{ (1+|y-x_1|)^{(N-2)p} (1+|y-x_j|)^{N-2} } \\
      \leq & \frac{C}{d^{(N-2)p}} \int_{W_1} \frac{1}{(1+|y-x_j|)^{N-2}} \\
      \leq & \frac{C}{d^{(N-2)p}} d^2 \leq \frac{C}{|x_1-x_j|^{(N-2)p-2}}.
    \end{split}
\end{equation}

For $y\in W_2$, we have $\frac{C}{k\varepsilon} \leq |y-x_1| \leq d$. Then, it follows that
\begin{equation}\label{en-p8}
   \begin{split}
     &\int_{W_2}  \frac{1}{ (1+|y-x_1|)^{(N-2)p} (1+|y-x_j|)^{N-2} } \\
     \leq & \frac{C}{d^{N-2}} \int_{W_2} \frac{1}{(1+|y-x_1|)^{(N-2)p}} \\
     \leq & \frac{C}{d^{N-2}} \left[ \left( \frac{1}{k\varepsilon}   \right)^{N-(N-2)p} + d^{N-(N-2)p}   \right] \\
     \leq & \frac{C}{d^{(N-2)p-2}} + \frac{C(k\varepsilon)^{(N-2)p-N}}{d^{N-2}} \\
     \leq & \frac{C}{|x_1-x_j|^{(N-2)p-2}} + \frac{C(k\varepsilon)^{(N-2)p-N}}{  |x_1-x_j|^{N-2} }.
   \end{split}
\end{equation}

Using Lemma \ref{LemmaA-4}, we have
\begin{equation}\label{en-p9}
    \begin{split}
      \sum\limits_{j=2}^k \frac{|\ln \varepsilon|^{m(p+1)}}{|x_j-x_1|^{(N-2)p-2}} &= O\left( |\ln \varepsilon|^{m(p+1)} \varepsilon^{\frac{(N-2)p-2}{N-2}}  \right) = O\left(\varepsilon^{1+\sigma} \right),\\
      \sum\limits_{j=2}^k \frac{|\ln \varepsilon|^{m(p+1)} (k\varepsilon)^{(N-2)p-N}  }{|x_j-x_1|^{N-2}} &= O\left( |\ln \varepsilon|^{m(p+1)} (k\varepsilon)^{(N-2)p-2} \right)= O\left(\varepsilon^{1+\sigma} \right).
    \end{split}
\end{equation}
Combining \eqref{en-p5}-\eqref{en-p9}, it follows that
$$
I_2=k \sum\limits_{j=2}^k \int_{\Omega_\varepsilon} (PV_{\Lambda,x_1})^{p}PV_{\Lambda,x_j} +
O\left( k \varepsilon^{1+\sigma} \right).
$$
Thus, by Lemma \ref{LE-8} and \eqref{en-important}, we obtain
\begin{equation}\label{en-p10}
  \begin{split}
    I_2 &= k \sum\limits_{j=2}^k \frac{\overline{A}_4 \Lambda^{N-2}}{|x_1-x_j|^{N-2}} +
    O\left( k\sum\limits_{j=2}^k  \frac{1}{|x_1-x_j|^{(N-2)p-2}} + k\sum\limits_{j=2}^k \frac{\varepsilon |\ln \varepsilon|^{m(p+1)}}{|x_1-x_j|^{N-3}}
    + k\varepsilon^{1+\sigma}     \right) \\
    &=k \sum\limits_{j=2}^k \frac{\overline{A}_4 \Lambda^{N-2}}{|x_1-x_j|^{N-2}} + O\left( k\varepsilon^{1+\sigma}     \right).
  \end{split}
\end{equation}

Next, we estimate $I_3$. Note that $|y-x_j| \geq \frac{1}{2} |x_1-x_j|$ for $y\in \Omega_1$, we deduce that
\begin{equation}\label{en-p11}
  \begin{split}
     I_3 =& k \int_{\Omega_1} (PV_{\Lambda,x_1})^{p-1} \left( \sum\limits_{j=2}^k PV_{\Lambda,x_j} \right)^2 \\
     \leq & C k \int_{\Omega_1} \frac{|\ln \varepsilon|^{m(p+1)}}{(1+|y-x_1|)^{(N-2)(p-1)}} \left( \sum\limits_{j=2}^k \frac{1}{(1+|y-x_j|)^{N-2}} \right)^2 \\
     \leq & C k \int_{\Omega_1} \frac{|\ln \varepsilon|^{m(p+1)}}{(1+|y-x_1|)^{(N-2)(p-1)}} \left( \sum\limits_{j=2}^k \frac{1}{(1+|y-x_j|)^{\frac{N-2-\vartheta}{2}}  |x_1-x_j|^{\frac{N-2+\vartheta}{2}}} \right)^2 \\
     \leq & C k \int_{\Omega_1} \frac{|\ln \varepsilon|^{m(p+1)}}{(1+|y-x_1|)^{(N-2)(p-1)}} \left( \sum\limits_{j=2}^k \frac{1}{(1+|y-x_1|)^{\frac{N-2-\vartheta}{2}}  |x_1-x_j|^{\frac{N-2+\vartheta}{2}}} \right)^2 \\
     \leq & C k \int_{\Omega_1} \frac{|\ln \varepsilon|^{m(p+1)}}{(1+|y-x_1|)^{(N-2)p-\vartheta}} \left( \sum\limits_{j=2}^k \frac{1}{ |x_1-x_j|^{\frac{N-2+\vartheta}{2}}} \right)^2 \\
     \leq & C k \int_{\Omega_1} \frac{|\ln \varepsilon|^{m(p+1)}}{(1+|y-x_1|)^{(N-2)p-\vartheta}} \left(   (k\varepsilon)^{\frac{N-2+\vartheta}{2}}   \right)^2 \\
     \leq & C k \varepsilon^{1+\sigma},
   \end{split}
\end{equation}
where $\vartheta > 0$ is a small constant such that $(N-2)p-\vartheta > N$.

Combining \eqref{en-p3}, \eqref{en-p4}, \eqref{en-p10} and \eqref{en-p11}, we obtain
\begin{equation}\label{en-p12}
  \begin{split}
    &\frac{1}{p+1}  \int_{\Omega_\varepsilon} (PV)^{p+1} \\
    =& k \left[ \frac{1}{p+1} \overline{A}_0 + \left( \overline{A}_3 - \frac{1}{p+1} \overline{A}_1   \right)\gamma \Lambda \varepsilon +\sum\limits_{j=2}^k \frac{\overline{A}_4 \Lambda^{N-2}}{|x_1-x_j|^{N-2}}+ O\left(  \varepsilon^{1+\sigma} \right) \right].
  \end{split}
\end{equation}
Then, use a similar argument, we can obtain
\begin{equation}\label{en-p13}
  \begin{split}
    &\frac{1}{q+1}  \int_{\Omega_\varepsilon} (PU)^{q+1} \\
    =& k \left[ \frac{1}{q+1} \overline{B}_0 + \left( \overline{B}_3 - \frac{1}{q+1} \overline{B}_1   \right)\gamma \Lambda \varepsilon +\sum\limits_{j=2}^k \frac{\overline{B}_4 \Lambda^{N-2}}{|x_1-x_j|^{N-2}}+ O\left(  \varepsilon^{1+\sigma} \right) \right].
  \end{split}
\end{equation}

Combining \eqref{en-p1}, \eqref{en-p2}, \eqref{en-p12} and \eqref{en-p13}, it follows that
\begin{equation}\label{en-p14}
\begin{split}
I(PU,PV)=k\left( Q_0 -Q_1 \gamma \Lambda \varepsilon -Q_2 \sum\limits_{j=2}^k \frac{\Lambda^{N-2}}{|x_1-x_j|^{N-2}}  +O(\varepsilon^{1+\sigma}) \right),
\end{split}
\end{equation}
where
\[
  \begin{split}
   Q_0=&\frac{\overline{A}_0+\overline{B}_0}{2} -\frac{\overline{A}_0}{p+1} -\frac{\overline{B}_0}{q+1} > 0,\\
   Q_1=&\frac{\overline{A}_3+\overline{B}_3}{2} +\frac{\overline{A}_1+\overline{B}_1}{2} -\frac{\overline{A}_1}{p+1} -\frac{\overline{B}_1}{q+1} > 0,\\
   Q_2=&\frac{\overline{A}_4 + \overline{B}_4}{2} > 0.
  \end{split}
\]
Similar to the proof of Lemma \ref{LemmaA-4}, there exists a constant $Q_3>0$ such that
$$
\sum\limits_{j=2}^k \frac{1}{|x_1-x_j|^{N-2}} =Q_3(k\varepsilon)^{N-2} + O(\varepsilon^{N-2}k) = Q_3\varepsilon + O(\varepsilon^{N-2}k).
$$
Thus, we have
$$
I(PU,PV)=k\left( Q_0 -Q_1 \gamma \Lambda \varepsilon -Q_4 \Lambda^{N-2} \varepsilon +o(\varepsilon) \right),
$$
where $Q_4=Q_2 Q_3 > 0$. This completes the proof.
\end{proof}

Once we obtain the energy estimate of $(PU,PV)$, we can calculate $F(\Lambda)$. It is desirable for $I(PU,PV)$ to be the main part of $F(\Lambda)$, as this allows us to find the critical point of $F(\Lambda)$ by perturbing the critical point of $I(PU,PV)$ slightly.

\begin{proposition}
   Let $N\geq 5$ and $p$ satisfies the condition (A), then we have
   \[
       F(\Lambda) = k(Q_0 - Q_1\gamma \Lambda \epsilon - Q_4 \Lambda^{N-2}\epsilon + o(\epsilon)),
   \]
   where $Q_i>0, i=0,1,4$ are given in Proposition \ref{prop-energy}.
\end{proposition}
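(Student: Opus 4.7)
The plan is to reduce the claim to Proposition~\ref{prop-energy} by showing that the perturbation $\omega=(\omega_1,\omega_2)$ contributes only $o(k\varepsilon)$ to the energy. I Taylor-expand
$$
F(\Lambda)=I(PU,PV)+\mathcal{L}(\omega)+\mathcal{Q}(\omega)+\mathcal{R}(\omega),
$$
where $\mathcal{L},\mathcal{Q},\mathcal{R}$ collect the linear, quadratic, and cubic-or-higher parts of $I$ in $\omega$. Proposition~\ref{prop-energy} already supplies the desired expansion for $I(PU,PV)$, so it remains to show each of $\mathcal{L}(\omega),\mathcal{Q}(\omega),\mathcal{R}(\omega)$ is $o(k\varepsilon)$. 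For $\mathcal{L}$, integration by parts using the Neumann boundary conditions together with the identities $-\Delta PU_{\Lambda,x_j}+\mu\varepsilon^2 PU_{\Lambda,x_j}=V_{1/\Lambda,x_j}^p$ (and the analogue for $PV$) collapses $\mathcal{L}$ to $-\int_{\Omega_\varepsilon}(l_1\omega_2+l_2\omega_1)$ with $l=(l_1,l_2)$ as in (\ref{op-3}). The weighted pointwise decay provided by the $*$- and $**$-norms, combined with Lemma~\ref{LemmaA-4}-type summations that reduce a double tail sum $\sum_{i,j}(1+|y-x_i|)^{-\alpha}(1+|y-x_j|)^{-\beta}$ to an effective single-sum factor of $k$, yields $|\mathcal{L}(\omega)|\le Ck\|l\|_{**}\|\omega\|_*\le Ck\varepsilon^{1+2\sigma}=o(k\varepsilon)$ by Lemma~\ref{l} and the existence theorem at the end of Section~2.

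The delicate step is
$$
\mathcal{Q}(\omega)=\int_{\Omega_\varepsilon}\!\nabla\omega_1\cdot\nabla\omega_2+\mu\varepsilon^2\!\int_{\Omega_\varepsilon}\!\omega_1\omega_2-\tfrac{p}{2}\!\int_{\Omega_\varepsilon}\!(PV)^{p-1}\omega_2^2-\tfrac{q}{2}\!\int_{\Omega_\varepsilon}\!(PU)^{q-1}\omega_1^2,
$$
since the mixed-gradient piece is not directly controlled by the $*$-norm. The idea is to use the reduction equation (\ref{Reduction}) itself as an identity: test its first component against $\omega_2$, its second against $\omega_1$, integrate by parts (Neumann terms vanish), and average the two resulting identities. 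This rewrites the gradient pairing as
\begin{align*}
\int\!\nabla\omega_1\!\cdot\!\nabla\omega_2+\mu\varepsilon^2\!\!\int\!\omega_1\omega_2=&\ \tfrac12\!\int\!\bigl(p(PV)^{p-1}\omega_2^2+q(PU)^{q-1}\omega_1^2\bigr)+\tfrac12\!\int\!(l_1\omega_2+l_2\omega_1)\\
&+\tfrac12\!\int\!\bigl(N_1(\omega_2)\omega_2+N_2(\omega_1)\omega_1\bigr)+\tfrac{c}{2}\!\int\!\Bigl(\sum_i Z_{i,1}\omega_2+\sum_i Z_{i,2}\omega_1\Bigr).
\end{align*}
The weighted $(PV)^{p-1}$ and $(PU)^{q-1}$ integrals then cancel in $\mathcal{Q}(\omega)$, leaving only residual pairings. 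The $l$-pairing is handled exactly as in $\mathcal{L}$; the nonlinear pairing obeys $|\int N_i(\omega)\omega|\le Ck\|N(\omega)\|_{**}\|\omega\|_*\le Ck\|\omega\|_*^{1+\min(p,2)}$ by Lemma~\ref{N}; and the $Z$-pairing combines $|c|\le C(\|l\|_{**}+\|N(\omega)\|_{**})\le C\varepsilon^{1/2+\sigma}$ from Lemma~\ref{existence} with the elementary bound $\bigl|\int(\sum_i Z_{i,1}\omega_2+\sum_i Z_{i,2}\omega_1)\bigr|\le Ck\|\omega\|_*$. Each contribution is $o(k\varepsilon)$.

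Finally, $\mathcal{R}(\omega)$ has integrand pointwise dominated by $|\omega_2|^{p+1}+|\omega_1|^{q+1}$, with an extra $(PV)^{p-2}|\omega_2|^3$ correction when $p>2$ (occurring only for $N=5$) and its symmetric counterpart, so the same weighted-norm arguments used in Lemma~\ref{N} give $|\mathcal{R}(\omega)|\le Ck\|\omega\|_*^{\min(p+1,3)}=o(k\varepsilon)$. Combining the three estimates yields $F(\Lambda)=I(PU,PV)+o(k\varepsilon)$, and Proposition~\ref{prop-energy} completes the proof. The main technical obstacle is tracking the factor $k$ correctly in every multilinear pairing: since $\omega$ has simultaneous tails at all $k$ bubble centers $x_j$, only the summability estimates of Lemma~\ref{LemmaA-4}, whose applicability rests on condition~(A), guarantee that each such pairing contributes $O(k)$ rather than $O(k^2)$ or worse as $k\to\infty$.
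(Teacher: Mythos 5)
Your proof is correct and follows the same overall strategy as the paper: expand $F$ around $(PU,PV)$, use the reduction equation to simplify the quadratic form, and estimate the residual pairings against the weighted norms via Lemmas~\ref{N}, \ref{l}, \ref{existence}, \ref{B1}, \ref{B2} and \ref{LemmaA-4}. There are, however, two small differences worth recording. First, you split off the exact Hessian $\mathcal{Q}$ at $(PU,PV)$ and push the rest into a remainder $\mathcal{R}$, whereas the paper writes the Taylor formula with the Hessian evaluated at the intermediate point $(PU+t\omega_1,PV+t\omega_2)$, so the ``cubic'' contribution appears there as $\frac{p}{2}\int\bigl[(PV+t\omega_2)^{p-1}-(PV)^{p-1}\bigr]\omega_2^2$ and its $q$-analogue; the two formulations are equivalent, and in either case one must note (as you do implicitly and the paper does by introducing a small loss $\epsilon^{-\eta(p+1)}$) that the weighted summations cost a negligible extra power of $\epsilon^{-\eta}$ which is absorbed by $\|\omega\|_*^{\min(p+1,3)}=O(\epsilon^{\min(p+1,3)(1/2+\sigma)})$. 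Second, and more interestingly, your treatment is \emph{more} careful than the paper's on one point: when you test the reduction equation \eqref{Reduction} against $(\omega_2,\omega_1)$ and average, the extra term
$\frac{c}{2}\int\bigl(\sum_i Z_{i,1}\omega_2+\sum_i Z_{i,2}\omega_1\bigr)$
genuinely survives, since the orthogonality constraint in \eqref{Reduction} kills $\int(\sum_i Z_{i,1}\omega_1+\sum_i Z_{i,2}\omega_2)$ but not its off-diagonal counterpart. The paper's printed expansion \eqref{FLambda} silently drops this term. You correctly keep it, bound $|c|\le C(\|l\|_{**}+\|N(\omega)\|_{**})\le C\epsilon^{1/2+\sigma}$ from Lemma~\ref{existence}, bound the $Z$-pairing by $Ck\|\omega\|_*$ (using $(N-2)p+\frac{N}{p+1}+\tau>N$ under condition~(A)), and conclude it is $O(k\epsilon^{1+2\sigma})=o(k\epsilon)$. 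This makes your argument complete where the paper's bookkeeping is slightly elliptic; the conclusion $F(\Lambda)=I(PU,PV)+o(k\epsilon)$ is the same.
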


\begin{proof}
   There is a $t \in (0,1)$ such that
   \begin{equation}\label{FLambda}
    \begin{split}
      F(\Lambda) & = I (PU+\omega_1(\Lambda), PV+\omega_2(\Lambda)) \\
      & = I(PU,PV) + \langle I_1^{\prime}(PU,PV) , \omega_1 \rangle + \langle I_2^{\prime}(PU,PV) , \omega_2 \rangle \\
      & + \dfrac{1}{2}(\omega_1,\omega_2) \times
      { \left(
          \begin{array}{cc}
            I_{11}^{\prime\prime}(PU+t\omega_1, PV+t\omega_2) & I_{12}^{\prime\prime}(PU+t\omega_1, PV+t\omega_2) \\
            I_{21}^{\prime\prime}(PU+t\omega_1, PV+t\omega_2) & I_{22}^{\prime\prime}(PU+t\omega_1, PV+t\omega_2)
          \end{array}
      \right)}
      \times
      {\left(
          \begin{array}{c}
            \omega_1 \\
            \omega_2
          \end{array}
      \right). } \\
      & = I(PU,PV) - \int_{\Omega_{\epsilon}} (l_2 \omega_1 + l_1 \omega_2) + \dfrac{1}{2} \int_{\Omega_\epsilon} [(l_1+N_1(\omega_2))\omega_2 + (l_2+N_2(\omega_1))\omega_1 ] \\
      & - \dfrac{p}{2} \int_{\Omega_{\epsilon}} ((PV + t\omega_2)^{p-1} - (PV)^{p-1})\omega_2^2 - \dfrac{q}{2} \int_{\Omega_{\epsilon}} ((PU + t\omega_1)^{q-1} - (PU)^{q-1})\omega_1^2 \\
      & = I(PU,PV) - \dfrac{1}{2} \int_{\Omega_{\epsilon}} (l_2 \omega_1 + l_1 \omega_2) + \dfrac{1}{2} \int_{\Omega_{\epsilon}} (N_1(\omega_2)\omega_2 + N_2(\omega_1)\omega_1) \\
      & - \dfrac{p}{2} \int_{\Omega_{\epsilon}} ((PV + t\omega_2)^{p-1} - (PV)^{p-1})\omega_2^2 - \dfrac{q}{2} \int_{\Omega_{\epsilon}} ((PU + t\omega_1)^{q-1} - (PU)^{q-1})\omega_1^2.
    \end{split}
   \end{equation}
   Note that
   \[
      \int_{\Omega_{\epsilon}} |N_1(\omega_2)||\omega_2| \leq C ||N_1(\omega_2)||_{**,1}||\omega_2||_{*,2} \int_{\Omega_{\epsilon}} \sum\limits_{j=1}^{k} \dfrac{1}{(1+|y-x_j|)^{\frac{N}{q+1}+\tau+2}}\sum\limits_{i=1}^{k} \dfrac{1}{(1+|y-x_j|)^{\frac{N}{p+1}+\tau}},
   \]
   and
   \[
      \begin{split}
         & \quad \sum\limits_{j=1}^{k} \dfrac{1}{(1+|y-x_j|)^{\frac{N}{q+1}+\tau+2}} \sum\limits_{i=1}^{k} \dfrac{1}{(1+|y-x_j|)^{\frac{N}{p+1}+\tau}} \\
         & \leq C \sum\limits_{j=1}^{k} \dfrac{1}{(1+|y-x_j|)^{N+2\tau}} + C \sum\limits_{j=1}^{k} \sum\limits_{i \neq j} \dfrac{1}{(1+|y-x_j|)^{\frac{N}{q+1}+\tau+2}}\dfrac{1}{(1+|y-x_i|)^{\frac{N}{p+1}+\tau}} \\
         & \leq C \sum\limits_{j=1}^{k} \dfrac{1}{(1+|y-x_j|)^{N+2\tau}} + \sum\limits_{j=1}^{k} \dfrac{1}{(1+|y-x_j|)^{N + \frac{\tau}{2}}}  \sum\limits_{i \neq j}\dfrac{C}{|x_i-x_j|^{\frac{3\tau}{2}}} \\
         & \leq C \dfrac{1}{(1+|y-x_j|)^{N+\frac{\tau}{2}}}.
      \end{split}
   \]
   Therefore, we have by Lemma \ref{N} that
   \begin{equation}\label{F-N1}
      \int_{\Omega_{\epsilon}} |N_1(\omega_2)||\omega_2| \leq Ck||N_1(\omega_2)||_{**,1}||\omega_2||_{*,2} \leq Ck||(\omega_1,\omega_2)||_{*}^{min(p+1,3)}.
   \end{equation}
   Similarly, we have
   \begin{equation}\label{F-N2}
      \int_{\Omega_{\epsilon}} |N_2(\omega_1)||\omega_1| \leq Ck||N_2(\omega_1)||_{**,2}||\omega_1||_{*,1} \leq Ck||(\omega_1,\omega_2)||_{*}^{min(p+1,3)}.
   \end{equation}
   On the other hand, by  a similar argument, we obtain
   \begin{equation}\label{F-l}
      \begin{split}
         \int_{\Omega_{\epsilon}} |l_1 \omega_2 + l_2\omega_1| & \leq ||l_1||_{**,1}||\omega_2||_{*,2}  \int_{\Omega_{\epsilon}} \sum\limits_{j=1}^{k} \dfrac{1}{(1+|y-x_j|)^{\frac{N}{q+1}+\tau+2}} \sum\limits_{i=1}^{k} \dfrac{1}{(1+|y-x_j|)^{\frac{N}{p+1}+\tau}}  \\
         & + ||l_2||_{**,2}||\omega_1||_{*,1} \int_{\Omega_{\epsilon}} \sum\limits_{j=1}^{k} \dfrac{1}{(1+|y-x_j|)^{\frac{N}{q+1}+\tau+2}} \sum\limits_{i=1}^{k} \dfrac{1}{(1+|y-x_j|)^{\frac{N}{p+1}+\tau}}  \\
         & \leq C k||(l_1,l_2)||_{**}||(\omega_1,\omega_2)||_{*}.
      \end{split}
   \end{equation}
   Next, we estimat the last two terms on the right side of (\ref{FLambda}). Note that
   \[
      (PV+t\omega_2)^{p-1} - (PV)^{p-1} =
      \begin{cases}
         & O(|\omega_2|^{p-1}),\;\;\hbox{if} \;\; N \geq 6,\\
         & O(|PV|^{p-2} |\omega_2| + |\omega_2|^{p-1}),\;\;\hbox{if} \;\; N = 5.\\
      \end{cases}
   \]
   For $N \geq 6$, we have
   \[
      \begin{split}
         \int_{\Omega_{\epsilon}} |(PV+t\omega_2)^{p-1} - (PV)^{p-1}|\omega_2^2 & \leq  \int_{\Omega_{\epsilon}} |\omega_{2}|^{p+1} dx \\
         & \leq ||\omega_2||_{*,2}^{p+1} \int_{\Omega_{\epsilon}} \left( \sum\limits_{j=1}^k \dfrac{1}{(1+|y-x_j|)^{\frac{N}{p+1}+\tau}} \right)^{p+1} dx.
      \end{split}
   \]
   Without loss of generality, we may assume $y \in \Omega_1$. Then, for arbitrarily small $\eta > 0$, we have
   \[
   \begin{split}
      \sum\limits_{j=2}^k \dfrac{1}{(1+|y-x_j|)^{\frac{N}{p+1}+\tau}} & \leq \sum\limits_{j=2}^k \dfrac{1}{(1+|y-x_1|)^{\tau}}\dfrac{1}{(1+|y-x_j|)^{\frac{N}{p+1}}} \\
      & \leq C \sum\limits_{j=2}^k \dfrac{1}{|x_1-x_j|^{\tau - \eta}} \dfrac{1}{(1+|y-x_1|)^{\frac{N}{p+1} + \eta }} \\
      & \leq C \epsilon^{-\eta} \dfrac{1}{(1+|y-x_1|)^{\frac{N}{p+1} + \eta }}.
   \end{split}
   \]
Thus
   \[
      \begin{split}
      \int_{\Omega_{\epsilon}} \left( \sum\limits_{j=1}^k \dfrac{1}{(1+|y-x_j|)^{\frac{N}{p+1}+\tau}} \right)^{p+1} dx & \leq C k \epsilon^{-\eta (p+1)} \int_{\Omega_1} \dfrac{1}{(1+|y-x_1|)^{N + \eta (p+1) }} \\
      & \leq C k \epsilon^{-\eta (p+1)}.
   \end{split}
   \]
   Hence, we have
   \[
      \int_{\Omega_{\epsilon}} |(PV+t\omega_2)^{p-1} - (PV)^{p-1}|\omega_2^2 \leq Ck\epsilon^{-\eta (p+1)} ||\omega_2||_{*,2}^{p+1}.
   \]
   For $N=5$, by similar estimate, we have
   \[
      \begin{split}
         & \quad \int_{\Omega_{\epsilon}} |(PV+t\omega_2)^{p-1}  - (PV)^{p-1}|\omega_2^2  \leq C \int_{\Omega_{\epsilon}} (|PV|^{p-2}|\omega_2|^3 + |\omega_2|^{p+1}) \\
          & \leq C ||\omega_2||_{*,2}^3 \int_{\Omega_{\epsilon}} \left( \sum\limits_{j=1}^k \dfrac{1}{(1+|y-x_j|)^{N-2}} \right)^{p-2}\left( \sum\limits_{j=1}^k \dfrac{1}{(1+|y-x_j|)^{\frac{N}{p+1}+\tau}} \right)^{3} \\
          & + C |\omega_2||_{*,2}^{p+1} \int_{\Omega_{\epsilon}}\left( \sum\limits_{j=1}^k \dfrac{1}{(1+|y-x_j|)^{\frac{N}{p+1}+\tau}} \right)^{p+1} \\
          & \leq Ck \epsilon^{-\eta (p+1)} ||\omega_{2}||_{*,2}^{3} .
      \end{split}
   \]
   Thus, we have
   \begin{equation}\label{F-error1}
      \int_{\Omega_{\epsilon}} |(PV+t\omega_2)^{p-1} - (PV)^{p-1}|\omega_2^2 \leq Ck\epsilon^{-\eta (p+1)} ||\omega_2||_{*,2}^{min(p+1,3)}.
   \end{equation}
   Similarly,
   \begin{equation}\label{F-error2}
      \int_{\Omega_{\epsilon}} |(PU+t\omega_1)^{p-1} - (PU)^{p-1}|\omega_2^2 \leq Ck\epsilon^{-\eta (q+1)} ||\omega_2||_{*,2}^{min(q+1,3)}.
   \end{equation}
   Combining (\ref{F-N1}), (\ref{F-N2}), (\ref{F-l}), (\ref{F-error1}), (\ref{F-error2}) and considering the fact that $q\geq p$, we can deduce that for sufficiently small $\sigma>0$, it holds that:
   \[
      \begin{split}
          F(\Lambda) & = I(PU,PV) + O(k\epsilon^{-\eta } ||(\omega_1, \omega_2)||_{*}^{min(p+1,3)} + ||(l_1,l_2)||_{**}||(\omega_1,\omega_2)||_{*} ) \\
          & = I(PU,PV) + O(k\epsilon^{1+\sigma}).
      \end{split}
   \]
   Finally, by Proposition \ref{prop-energy}, the proof is completed .
\end{proof}

\section{Proof of Theorem 1.1}

\begin{proof}[Proof of Theorem 1.1]
  Indeed, it is sufficient to prove the existence of a critical point for $F(\Lambda)$ within the interval $[\delta, \delta^{-1}].$ We observe that the function
   \[
      -Q_1 \gamma \Lambda - Q_4\Lambda^{N-2}
   \]
   has a unique maximum at $\Lambda = \left(\frac{-Q_1 \gamma}{Q_4(N-2)}\right)^{\frac{1}{N-3}}$. Consequently, if $\delta > 0$ is sufficiently small, $F(\Lambda)$ will attain a maximum point within the interior of $[\delta, \delta^{-1}]$. Therefore, $F(\Lambda)$ possesses a critical point in the open interval $(\delta, \delta^{-1})$. As a result, $(PU+\omega_1, PV+\omega_2)$ is a solution to problem (\ref{equ-1}) corresponding to such a $\Lambda$.
\end{proof}

We would like to point out that we assume $p$ satisfy condition (A). One of the main reson is that when $p > \frac{N}{N-2}$, the asymptotic behavior of the ground state $U_{0,1}$ to system (\ref{4}) is "good enough". However, in order to make the linear projection $(PU, PV)$ be a "good approximation" (the error term is negligible) of the solution to system (\ref{equ-2}), we need to further restrict the range of $p$. On the other hand, when $p < \frac{N}{N-2}$, the existence of bubbling solution to system (\ref{equ-1}) becomes much more complicated due to the "bad behaviour" of $U_{0,1}$ at infinity. Hence, we need to find a finer approximation instead of $(PU, PV)$ to employ the reduction argument, which will considered in our coming paper.

\appendix

\numberwithin{equation}{section}

\section{Essential Results}

In this section, we will proceed with the estimation of the error terms $\varphi_{\Lambda,x_j}$ and $\psi_{\Lambda,x_j}$, defined as $\varphi_{\Lambda,x_j}(y) = U_{\frac{1}{\Lambda}, x_j}(y) - PU_{\Lambda,x_j}(y)$ and $\psi_{\Lambda,x_j}(y) = V_{\frac{1}{\Lambda}, x_j}(y) - PV_{\Lambda,x_j}(y)$, respectively. These estimations will be based on the methods presented in \cite{N=3, N>=4} and \cite{Neumann-Linli}. However, due to the boundary condition and according to the geometry property of the domain $\Omega.$ We obtain some crucial estimates which  have been extensively utilized in the preceding sections. And we believe these results can be applied to other problems related to the critical system of Hamiltonian type.

\begin{lemma}\label{LemmaA-1}
It holds that
\begin{equation}\label{important-1}
   \varphi_{\Lambda,x_j}(y)=\varepsilon \Lambda^{1-\frac{N}{q+1}} \varphi_0\left(\frac{y-x_j}{\Lambda} \right) + O\left(
   \frac{\varepsilon^{2}|\ln \varepsilon|^m}{  (1+| y-x_j|)^{N-4}   }  +  \varepsilon^{N-2} \right),
\end{equation}
and
\begin{equation}\label{important-2}
\psi_{\Lambda,x_j}(y)=\varepsilon \Lambda^{1-\frac{N}{p+1}} \varphi_0\left(\frac{y-x_j}{\Lambda} \right) + O\left(
   \frac{\varepsilon^{2}|\ln \varepsilon|^m}{  (1+| y-x_j|)^{N-4}   }  +  \varepsilon^{N-2} \right),
\end{equation}
where $\varphi_0$ is defined in \eqref{a-8}, $m=1$ for $N=5$ and $m=0$ for $N \geq 6$.

\end{lemma}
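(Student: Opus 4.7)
The plan is to write $\varphi_{\Lambda,x_j}$ as the solution of a linear Neumann problem on $\Omega_\varepsilon$ driven by two small sources, extract the leading correction $\varphi_0$ explicitly by matching asymptotics on the boundary, and estimate the remainder via a maximum-principle / Green's-function argument on the scaled domain.

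First, since $U_{\frac{1}{\Lambda},x_j}$ is harmonic up to the nonlinear right-hand side and satisfies $-\Delta U_{\frac{1}{\Lambda},x_j}=V_{\frac{1}{\Lambda},x_j}^{p}$ on all of $\mathbb{R}^N$, while $PU_{\Lambda,x_j}$ solves \eqref{equ-3}, the difference $\varphi_{\Lambda,x_j}$ solves
\[
-\Delta \varphi_{\Lambda,x_j}+\mu\varepsilon^{2}\varphi_{\Lambda,x_j}=\mu\varepsilon^{2}U_{\frac{1}{\Lambda},x_j}\quad\text{in }\Omega_\varepsilon,\qquad \frac{\partial \varphi_{\Lambda,x_j}}{\partial n}=\frac{\partial U_{\frac{1}{\Lambda},x_j}}{\partial n}\quad\text{on }\partial\Omega_\varepsilon.
\]
There are therefore exactly two sources of error: a \emph{bulk} term $\mu\varepsilon^{2}U_{\frac{1}{\Lambda},x_j}$ of size $\varepsilon^{2}(1+|y-x_j|)^{-(N-2)}$, and a \emph{boundary} Neumann datum arising because the spherical level sets of $U_{\frac{1}{\Lambda},x_j}$ are not tangent to the curved $\partial\Omega_\varepsilon$.

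Next, I would expand the Neumann datum. Using the local chart near $x_j\in\partial\Omega_\varepsilon$ given by \eqref{en-2}--\eqref{en-3} in the scaled variable, $\partial\Omega_\varepsilon$ is the graph $y_N-(x_j)_N=\frac{\varepsilon}{2}\sum k_i(y_i-(x_j)_i)^{2}+O(\varepsilon^{2}|y'|^{3})$, and the unit outer normal equals $-e_N+O(\varepsilon|y'|)$. Since $U_{\frac{1}{\Lambda},x_j}$ is even in each coordinate around $x_j$, the principal $O(1)$ contribution of $\partial_n U_{\frac{1}{\Lambda},x_j}$ vanishes on the flat tangent plane $y_N=(x_j)_N$, and a Taylor expansion yields
\[
\frac{\partial U_{\frac{1}{\Lambda},x_j}}{\partial n}\Big|_{\partial\Omega_\varepsilon}=-\frac{N-2}{2}\varepsilon\,\Lambda^{-\frac{N}{q+1}}\sum_{i=1}^{N-1}k_i\frac{\bigl(\frac{y_i-(x_j)_i}{\Lambda}\bigr)^{2}}{(1+|\frac{y-x_j}{\Lambda}|^{2})^{N/2}}+O\!\Big(\frac{\varepsilon^{2}}{(1+|y-x_j|)^{N-3}}\Big).
\]
Motivated by this, define $\varphi_0$ to be the unique decaying harmonic function on $\mathbb R^N_+$ satisfying
\[
-\Delta\varphi_0=0\text{ in }\mathbb R^N_+,\qquad \frac{\partial \varphi_0}{\partial y_N}\Big|_{y_N=0}=-\frac{N-2}{2}\sum_{i=1}^{N-1}k_i\frac{y_i^{2}}{(1+|y|^{2})^{N/2}},\qquad \varphi_0(y)=O\bigl((1+|y|)^{-(N-3)}\bigr),
\]
obtained by convolving the Neumann datum with the half-space Green kernel. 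Then $\tilde\varphi:=\varepsilon\Lambda^{1-\frac{N}{q+1}}\varphi_0\!\bigl(\tfrac{y-x_j}{\Lambda}\bigr)$ extended harmonically is designed so that $\partial_n(\varphi_{\Lambda,x_j}-\tilde\varphi)=O(\varepsilon^{2})$ on $\partial\Omega_\varepsilon$ with the required decay.

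The last step is to estimate the remainder $R:=\varphi_{\Lambda,x_j}-\tilde\varphi$. It solves a Neumann problem whose bulk datum is still $O(\varepsilon^{2}(1+|y-x_j|)^{-(N-2)})$ and whose boundary datum is $O(\varepsilon^{2}(1+|y-x_j|)^{-(N-3)})$. Bounding $R$ pointwise via the Neumann Green's function of $-\Delta+\mu\varepsilon^{2}$ on $\Omega_\varepsilon$ and integrating the two data against the Green kernel yields the pointwise bound $|R(y)|\le C\varepsilon^{2}|\ln\varepsilon|^{m}(1+|y-x_j|)^{-(N-4)}+C\varepsilon^{N-2}$, the logarithm appearing precisely in the borderline case $N=5$, and the uniform tail $\varepsilon^{N-2}$ arising from the diameter $\sim \varepsilon^{-1}$ of $\Omega_\varepsilon$ when one trades decay for the $L^{\infty}$ bound. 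The same argument, with $(q+1)$ replaced by $(p+1)$ and $U$ replaced by $V$, gives the corresponding estimate for $\psi_{\Lambda,x_j}$.

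The main obstacle is Step 2--3: carrying out the two-term boundary expansion in a way that identifies \emph{exactly} the half-space Neumann datum governing $\varphi_0$, and controlling the remainder boundary term uniformly in the three regimes (near $x_j$, intermediate, and near $\partial\Omega_\varepsilon\setminus B(x_j,\varepsilon^{-1/2})$). In dimension $N=5$ the decay $(1+|y-x_j|)^{-(N-4)}=(1+|y-x_j|)^{-1}$ is borderline integrable against the Green kernel, which is what produces the logarithmic factor $|\ln\varepsilon|$; making this precise requires a careful case split when convolving the boundary datum with the half-space Poisson kernel.
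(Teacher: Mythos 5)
Your proposal takes essentially the same route as the paper: both write $\varphi_{\Lambda,x_j}$ as the solution of the Neumann problem with bulk source $\mu\varepsilon^2 U_{\frac1\Lambda,x_j}$ and boundary source $\partial_n U_{\frac1\Lambda,x_j}$, identify the half-space harmonic corrector $\varphi_0$ with Neumann datum $-\frac{N-2}{2}\frac{\sum_i k_i y_i^2}{(1+|y'|^2)^{N/2}}$ as the leading $O(\varepsilon)$ term, and control the remainder through Green's-function convolution estimates, with the $|\ln\varepsilon|$ factor coming from the borderline convolution at $N=5$ and the uniform $\varepsilon^{N-2}$ tail from the far-field. The one place where the paper is more explicit than you — and where your phrase ``extended harmonically'' glosses over a real step — is in how the half-space profile is transplanted onto the curved $\Omega_\varepsilon$: the paper splits $\varphi=\varphi_1+\varphi_2$ (bulk vs.\ boundary source), then further writes $\widetilde\varphi_2=\lambda^{\frac{N}{q+1}-1}\varphi_0(\lambda(\cdot-\widetilde x_j))+\widetilde\varphi_3+\widetilde\varphi_4$ near $\widetilde x_j$, where $\widetilde\varphi_3$ absorbs the mismatch between the flat-plane Neumann datum of $\varphi_0$ and the actual curved boundary datum, and $\widetilde\varphi_4$ absorbs the bulk defect $(\Delta-\mu)[\lambda^{\frac{N}{q+1}-1}\varphi_0(\lambda(\cdot-\widetilde x_j))]$; making these two correctors explicit is precisely what supplies the decay rates and the $|\ln\varepsilon|$ factor you anticipate but do not fully derive.
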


\begin{proof}

Note that
\begin{equation}\label{a-1}
  \begin{cases}
    -\Delta \varphi_{\Lambda,x_j} + \mu \varepsilon^2 \varphi_{\Lambda,x_j} =\mu \varepsilon^2 U_{\frac{1}{\Lambda}, x_j},\;\;\; &\hbox{in } \Omega_\varepsilon,\\
    \frac{\partial \varphi_{\Lambda,x_j}}{\partial n} = \frac{\partial }{\partial n} U_{\frac{1}{\Lambda}, x_j}, &\hbox{on } \partial\Omega_\varepsilon.
  \end{cases}
\end{equation}

Write $\varphi_{\Lambda,x_j}=\varphi_{1}+\varphi_{2}$, where $\varphi_{1}$ is the solution of
\begin{equation}\label{a-2}
  \begin{cases}
    -\Delta \varphi_{1} + \mu \varepsilon^2 \varphi_{1} =\mu \varepsilon^2 U_{\frac{1}{\Lambda}, x_j},\;\;\; &\hbox{in } \Omega_\varepsilon,\\
    \frac{\partial \varphi_{1}}{\partial n} = 0, &\hbox{on } \partial\Omega_\varepsilon.
  \end{cases}
\end{equation}
and $\varphi_{2}$ is the solution of
\begin{equation}\label{a-3}
  \begin{cases}
    -\Delta \varphi_{2} + \mu \varepsilon^2 \varphi_{2} =0,\;\;\; &\hbox{in } \Omega_\varepsilon,\\
    \frac{\partial \varphi_{2}}{\partial n} = \frac{\partial }{\partial n} U_{\frac{1}{\Lambda}, x_j}, &\hbox{on } \partial\Omega_\varepsilon.
   \end{cases}
\end{equation}

Using Lemma \ref{LemmaA-3}, we find that
\begin{equation}\label{a-4}
  \begin{split}
    |\varphi_1(y)| \leq & C \varepsilon^2 \int_{\Omega_\varepsilon} \frac{U_{\frac{1}{\Lambda}, x_j}}{|y-z|^{N-2}} dz \\
    \leq & C \varepsilon^2 \int_{\Omega_\varepsilon} \frac{1}{(1+|z-x_j|)^{N-2}|y-z|^{N-2}} dz \\
    \leq & \frac{C\varepsilon^2|\ln \varepsilon|^m}{(1+|y-x_j|)^{N-4}}.
  \end{split}
\end{equation}

Next we estimate $\varphi_2$. Let $\lambda = \frac{1}{\varepsilon \Lambda}, \widetilde{x}_j = \varepsilon x_j$, and we make a transformation $$\widetilde{\varphi}_2(y) = \varepsilon^{-\frac{N}{q+1}} \varphi_2(\frac{y}{\varepsilon}).$$ Then, $\widetilde{\varphi}_2(y)$ satisfies the following equation:
\begin{equation}\label{a-5}
  \begin{cases}
     -\Delta \widetilde{\varphi}_{2} + \mu \widetilde{\varphi}_{2} =0,\;\;\; &\hbox{in } \Omega,\\
     \frac{\partial \widetilde{\varphi}_{2}}{\partial n} = \frac{\partial }{\partial n} U_{\lambda, \widetilde{x}_j}, &\hbox{on } \partial\Omega.
  \end{cases}
\end{equation}

If $y \notin B_{\delta}(\widetilde{x}_j)$, then $|G(z,y)| \leq C$ for all $z \in B_{\frac{\delta}{2}}(\widetilde{x}_j)$, where $G(z,y)$ is the Green function of $-\Delta + \mu I$ in $\Omega$ with the Neumann boundary condition. Hence, we have
\begin{equation}\label{a-6}
  \begin{split}
    |\widetilde{\varphi}_2(y)| = &\left| \int_\Omega G(z,y)  \frac{\partial }{\partial n} U_{\lambda, \widetilde{x}_j} (z) dz  \right| \\
    \leq & \left|  \int_{\Omega \cap B_{\frac{\delta}{2}}(\widetilde{x}_j)} G(z,y)  \frac{\partial }{\partial n} U_{\lambda, \widetilde{x}_j} (z) dz    \right| +
    \left|  \int_{\Omega \setminus B_{\frac{\delta}{2}}(\widetilde{x}_j)} G(z,y)  \frac{\partial }{\partial n} U_{\lambda, \widetilde{x}_j} (z) dz    \right| \\
   \leq & C \left|  \int_{\Omega \cap B_{\frac{\delta}{2}}(\widetilde{x}_j)} \frac{\partial }{\partial n} U_{\lambda, \widetilde{x}_j} (z) dz    \right|
   + C \left|  \int_{\Omega \setminus B_{\frac{\delta}{2}}(\widetilde{x}_j)} \frac{1}{|y-z|^{N-2}}  \frac{\lambda^{\frac{N}{q+1}}}{\lambda^{N-2} |z-\widetilde{x}_j|^{N-2}   }   \right| \\
   \leq & C \left|  \int_{\Omega \cap B_{\frac{\delta}{2}}(\widetilde{x}_j)} \frac{\lambda^{\frac{N}{q+1}}}{\lambda^{N-2} |z-\widetilde{x}_j|^{N-1}   }   \right|
   +\frac{C}{\lambda^{\frac{N}{p+1}}}   \left|  \int_{\Omega \setminus B_{\frac{\delta}{2}}(\widetilde{x}_j)} \frac{1}{|y-z|^{N-2}}   \right| \\
   \leq & \frac{C}{\lambda^{\frac{N}{p+1}}}  \leq C \varepsilon^{\frac{N}{p+1}},\;\;\;\;\hbox{for } y \notin B_{\delta}(\widetilde{x}_j).
\end{split}
\end{equation}

If $y \in B_{\delta}(\widetilde{x}_j)$, we write
\begin{equation}\label{a-7}
    \widetilde{\varphi}_2(y) = \lambda^{\frac{N}{q+1}-1} \varphi_0(\lambda (y-\widetilde{x}_j)) + \widetilde{\varphi}_3(y) + \widetilde{\varphi}_4(y),
\end{equation}
where $\varphi_0, \widetilde{\varphi}_3, \widetilde{\varphi}_4$ satisfies
\begin{equation}\label{a-8}
   \begin{cases}
     -\Delta \varphi_{0}=0,\;\;\; &\hbox{in } \mathbb R^N_+ = \{(y',y_N) : y_N > 0\},\\
     \frac{\partial \varphi_{0}}{\partial n} = -\frac{N-2}{2} \frac{\sum\limits_{i=1}^N k_i y_i^2}{(1+|y'|^2)^{\frac{N}{2}}}, &\hbox{on } \partial\mathbb R^N_+,
   \end{cases}
\end{equation}

\begin{equation}\label{a-9}
\begin{cases}
-\Delta \widetilde{\varphi}_{3} + \mu \widetilde{\varphi}_{3}=0,\;\;\; &\hbox{in } \Omega,\\
\frac{\partial \widetilde{\varphi}_{3}}{\partial n} = \frac{\partial }{\partial n} U_{\lambda, \widetilde{x}_j} - \frac{\partial }{\partial n}
\left[  \lambda^{\frac{N}{q+1}-1} \varphi_0(\lambda (y-\widetilde{x}_j))       \right], &\hbox{on } \partial\Omega,
\end{cases}
\end{equation}

\begin{equation}\label{a-10}
\begin{cases}
-\Delta \widetilde{\varphi}_{4} + \mu \widetilde{\varphi}_{4}=
(\Delta - \mu)\left[ \lambda^{\frac{N}{q+1}-1} \varphi_0(\lambda (y-\widetilde{x}_j))     \right],\;\;\; &\hbox{in } \Omega,\\
\frac{\partial \widetilde{\varphi}_{4}}{\partial n} = 0, &\hbox{on } \partial\Omega,
\end{cases}
\end{equation}
respectively. Using Green's representation, $\varphi_0$ can be written as
\begin{equation}\label{a-11}
    \varphi_0(y)=\frac{1}{\omega_{N-1}} \sum\limits_{i=1}^{N-1} k_i \int_{\mathbb R^{N-1}} \frac{z_i^2}{(1+|z'|^2)^{\frac{N}{2}}} \frac{1}{|y'-z'|^{N-2}} dz',
\end{equation}
which implies

\begin{equation}\label{a-12}
\begin{split}
|\varphi_0(y)| &\leq \frac{C}{(1+|y|)^{N-3}}, \\
|\nabla \varphi_0(y)| &\leq \frac{C}{(1+|y|)^{N-2}}, \\
|D^2 \varphi_0(y)| &\leq \frac{C}{(1+|y|)^{N-1}}.
\end{split}
\end{equation}

For $\widetilde{\varphi}_4$, we have by using Lemma \ref{LemmaA-3} that
\begin{equation}\label{a-13}
   \begin{split}
     |\widetilde{\varphi}_4| \leq & C \lambda^{\frac{N}{q+1}-1} \int_\Omega \left[ \frac{1}{(1+\lambda |z-\widetilde{x}_j|)^{N-1} |y-z|^{N-2}} +
     \frac{1}{(1+\lambda |z-\widetilde{x}_j|)^{N-3} |y-z|^{N-2}}  \right]dz \\
     \leq & C \varepsilon^{-\frac{N}{q+1}+1} \varepsilon^2 \int_{\Omega_\varepsilon} \frac{1}{(1+|z-x_j|)^{N-3}} \frac{1}{|\varepsilon^{-1} y - z|^{N-2}} dz \\
     \leq & C \varepsilon^{-\frac{N}{q+1}+1} \frac{\varepsilon|\ln \varepsilon|^m}{  (1+|\varepsilon^{-1} y-x_j|)^{N-4}   } .\\
   \end{split}
\end{equation}

For $\widetilde{\varphi}_3$, we have
\begin{equation}\label{a-14}
  \begin{split}
    &\frac{\partial }{\partial n} U_{\lambda, \widetilde{x}_j} = -\frac{N-2}{2} \frac{\lambda^{\frac{N}{q+1}}}{(1+|y'|^2)^{\frac{N}{2}}}
    \left( \sum\limits_{i=1}^{N-1} k_i y_i^2 + O \left(\varepsilon |y'|^3\right)  \right), \\
    &\frac{\partial }{\partial n} \left[  \lambda^{\frac{N}{q+1}-1} \varphi_0(\lambda (y-\widetilde{x}_j))       \right] =
    -\frac{N-2}{2} \frac{\lambda^{\frac{N}{q+1}}}{(1+|y'|^2)^{\frac{N}{2}}} \sum\limits_{i=1}^{N-1} k_i y_i^2 + O \left( \frac{\varepsilon^{1-\frac{N}{q+1}}|y'|}{(1+|y'|)^{N-2}}\right).
  \end{split}
\end{equation}
Then through Green's representation, we also have
\begin{equation}\label{a-15}
|\widetilde{\varphi}_3| \leq C \varepsilon^{-\frac{N}{q+1}+1} \frac{\varepsilon|\ln \varepsilon|^m}{  (1+|\varepsilon^{-1} y-x_j|)^{N-4}   } .
\end{equation}

Combining \eqref{a-6}, \eqref{a-7}, \eqref{a-13} and \eqref{a-15}, we obtain
\begin{equation}\label{a-16}
  \begin{split}
    \widetilde{\varphi}_2(y)=& \lambda^{\frac{N}{q+1}-1} \varphi_0(\lambda (y-\widetilde{x}_j)) + O\left(
    \frac{\varepsilon^{-\frac{N}{q+1}+2}|\ln \varepsilon|^m}{  (1+|\varepsilon^{-1} y-x_j|)^{N-4}   }  +  \varepsilon^{\frac{N}{p+1}} \right) \\
    =& (\Lambda \varepsilon)^{1-\frac{N}{q+1}} \varphi_0\left(\frac{y-\widetilde{x}_j}{\Lambda \varepsilon} \right) + O\left(
   \frac{\varepsilon^{-\frac{N}{q+1}+2}|\ln \varepsilon|^m}{  (1+|\varepsilon^{-1} y-x_j|)^{N-4}   }  +  \varepsilon^{\frac{N}{p+1}} \right).
  \end{split}
\end{equation}

Then from $\widetilde{\varphi}_2(y) = \varepsilon^{-\frac{N}{q+1}} \varphi_2(\frac{y}{\varepsilon})$, we finally obtain
\begin{equation}\label{a-17}
   \varphi_2(y)=\varepsilon \Lambda^{1-\frac{N}{q+1}} \varphi_0\left(\frac{y-x_j}{\Lambda} \right) + O\left(
   \frac{\varepsilon^{2}|\ln \varepsilon|^m}{  (1+| y-x_j|)^{N-4}   }  +  \varepsilon^{N-2} \right).
\end{equation}

Combining \eqref{a-17} and \eqref{a-4}, we obtain \eqref{important-1}. Similarly, we can prove \eqref{important-2}.
\end{proof}

As a direct consequence of Lemma \ref{LemmaA-1}, we have

\begin{lemma}\label{LemmaA-2}
There is a constant $C>0$ such that
$$
|\varphi_{\Lambda,x_j}(y)| \leq \frac{C\varepsilon |\ln \varepsilon|^m}{(1+|y-x_j|)^{N-3}},\;\;|\partial_\Lambda \varphi_{\Lambda,x_j}(y)| \leq \frac{C\varepsilon |\ln \varepsilon|^m}{(1+|y-x_j|)^{N-3}},
$$
and
$$
|\psi_{\Lambda,x_j}(y)| \leq \frac{C\varepsilon |\ln \varepsilon|^m}{(1+|y-x_j|)^{N-3}},\;\;|\partial_\Lambda \psi_{\Lambda,x_j}(y)| \leq \frac{C\varepsilon |\ln \varepsilon|^m}{(1+|y-x_j|)^{N-3}},
$$
where $m=1$ for $N=5$ and $m=0$ for $N \geq 6$.
\end{lemma}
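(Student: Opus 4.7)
The plan is to read the stated pointwise decay off the asymptotic expansion obtained in Lemma \ref{LemmaA-1}, combined with the pointwise size estimate \eqref{a-12} on $\varphi_0$ and the fact that $\Lambda$ is confined to the compact interval $[\delta,\delta^{-1}]$. The key geometric observation is that every $y\in\Omega_\varepsilon$ satisfies $|y-x_j|\leq C/\varepsilon$, because $\mathrm{diam}(\Omega_\varepsilon)=O(\varepsilon^{-1})$ and $x_j\in\partial\Omega_\varepsilon$; this is precisely what turns the error terms in \eqref{important-1}--\eqref{important-2} into bounds of the stated form.

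First I would prove the bound on $\varphi_{\Lambda,x_j}$ itself. The leading term in \eqref{important-1} satisfies
$$
\varepsilon\,\Lambda^{1-N/(q+1)}\bigl|\varphi_0\bigl((y-x_j)/\Lambda\bigr)\bigr|\leq \frac{C\varepsilon}{(1+|y-x_j|/\Lambda)^{N-3}}\leq \frac{C\varepsilon}{(1+|y-x_j|)^{N-3}}
$$
by \eqref{a-12} and $\Lambda\in[\delta,\delta^{-1}]$. For the two $O$-terms I would use $\varepsilon(1+|y-x_j|)\leq C$ to write
$$
\frac{\varepsilon^{2}|\ln\varepsilon|^m}{(1+|y-x_j|)^{N-4}}=\varepsilon(1+|y-x_j|)\cdot\frac{\varepsilon|\ln\varepsilon|^m}{(1+|y-x_j|)^{N-3}}\leq \frac{C\varepsilon|\ln\varepsilon|^m}{(1+|y-x_j|)^{N-3}},
$$
and $\varepsilon^{N-2}=\varepsilon\cdot\varepsilon^{N-3}\leq C\varepsilon/(1+|y-x_j|)^{N-3}$ for the same reason. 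The bound for $\psi_{\Lambda,x_j}$ follows identically from \eqref{important-2}.

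For the $\Lambda$-derivatives I would redo the decomposition in the proof of Lemma \ref{LemmaA-1} for $\partial_\Lambda\varphi_{\Lambda,x_j}$, which satisfies the Neumann problem obtained from \eqref{a-1} by differentiation in $\Lambda$: the interior source becomes $\mu\varepsilon^{2}\partial_\Lambda U_{\frac{1}{\Lambda},x_j}$, and the boundary flux becomes $\partial_\Lambda(\partial_n U_{\frac{1}{\Lambda},x_j})$. A direct scaling computation gives $|\partial_\Lambda U_{\frac{1}{\Lambda},x_j}(z)|\leq C(1+|z-x_j|)^{-(N-2)}$, of the same order as $U_{\frac{1}{\Lambda},x_j}$ itself, and analogously for the boundary data. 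Hence the entire argument of Lemma \ref{LemmaA-1}, including the Green's-function representation via Lemma \ref{LemmaA-3} and the three-piece splitting $\widetilde{\varphi}_2=\lambda^{N/(q+1)-1}\varphi_0(\lambda(y-\widetilde{x}_j))+\widetilde{\varphi}_3+\widetilde{\varphi}_4$, carries over with $\varphi_0$ replaced by its appropriate $\Lambda$-derivative. Since $\partial_\Lambda[\varphi_0(z/\Lambda)]=-\Lambda^{-2}\nabla\varphi_0(z/\Lambda)\cdot z$ and $|\nabla\varphi_0(z)\cdot z|\leq C(1+|z|)^{-(N-3)}$ by \eqref{a-12}, the leading contribution is again of size $C\varepsilon/(1+|y-x_j|)^{N-3}$, and the remainder is absorbed exactly as in the previous paragraph. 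The case of $\partial_\Lambda\psi_{\Lambda,x_j}$ is handled in the same manner.

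The main obstacle I anticipate is not analytic but bookkeeping: one must track each factor of $\Lambda$ (equivalently $\lambda=1/(\varepsilon\Lambda)$) produced when differentiating $\lambda^{N/(q+1)-1}\varphi_0(\lambda(y-\widetilde{x}_j))$ and its $\widetilde{\varphi}_3,\widetilde{\varphi}_4$ counterparts, and verify that no unintended negative power of $\varepsilon$ appears in the error. Once this check is done, the reduction to Lemma \ref{LemmaA-1} is mechanical and the four estimates follow simultaneously.
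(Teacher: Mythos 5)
Your argument is correct and follows essentially the same route as the paper: the bounds on $\varphi_{\Lambda,x_j}$ and $\psi_{\Lambda,x_j}$ are read off Lemma \ref{LemmaA-1} together with \eqref{a-12} and the observation that $\varepsilon \leq C/(1+|y-x_j|)$ on $\Omega_\varepsilon$, and the $\Lambda$-derivative bounds are obtained by differentiating \eqref{a-1} in $\Lambda$ and repeating the procedure of Lemma \ref{LemmaA-1} rather than differentiating the expansion itself. Your explicit absorption of the two error terms and the remark on tracking the $\Lambda$-factors are just a more detailed write-up of what the paper leaves implicit.
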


\begin{proof}
   For $y \in \Omega_\varepsilon$, we have $\varepsilon \leq \frac{C}{1+|y-x_j|}$. Then, by using Lemma \ref{LemmaA-1} and \eqref{a-12}, we have
$$
|\varphi_{\Lambda,x_j}(y)| \leq \frac{C\varepsilon|\ln \varepsilon|^m}{(1+|y-x_j|)^{N-3}}.
$$
Differentiating \eqref{a-1} with respect to $\Lambda$, we can repeat the same procedure as in Lemma \ref{LemmaA-1} to obtain
$$
|\partial_\Lambda \varphi_{\Lambda,x_j}(y)| \leq \frac{C\varepsilon|\ln \varepsilon|^m}{(1+|y-x_j|)^{N-3}}.
$$
The estimate to $\psi_{\Lambda,x_j}$ and $\partial_\Lambda \psi_{\Lambda,x_j}(y)$ is similar.

\end{proof}

In order to complete the proof of Lemma \ref{LemmaA-1} and Lemma \ref{LemmaA-2}, we require the following result, which has already been proven in \cite{N>=4}:

\begin{lemma}\label{LemmaA-3}
Let $u$ be the solution of
$$
-\Delta u + \mu \varepsilon^2 u = f \hbox{  in  } \Omega_\varepsilon,\;\;\; \frac{\partial u}{\partial n} = 0 \hbox{  on  } \partial \Omega_\varepsilon.
$$
Then we have
$$
|u(x)| \leq C \int_{\Omega_\varepsilon} \frac{|f(y)|}{|x-y|^{N-2}}dy.
$$
\end{lemma}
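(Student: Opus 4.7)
The plan is to reduce Lemma A.3 to the classical Green's function estimate on the fixed smooth domain $\Omega$ by undoing the $\varepsilon$-scaling that produced $\Omega_\varepsilon$. First I would introduce $\tilde u(z) := u(z/\varepsilon)$ for $z \in \Omega$. A direct chain-rule computation shows
\begin{equation*}
-\Delta_z \tilde u(z) + \mu \, \tilde u(z) \;=\; \varepsilon^{-2} f(z/\varepsilon) \quad \text{in } \Omega, \qquad \frac{\partial \tilde u}{\partial n} = 0 \quad \text{on } \partial\Omega,
\end{equation*}
so the rescaled problem is posed on a fixed domain with the fixed operator $-\Delta+\mu I$ and Neumann boundary condition.

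Next I would invoke the Green's function $G(z,w)$ for $-\Delta + \mu I$ on $\Omega$ with the Neumann boundary condition and use the classical pointwise bound
\begin{equation*}
0 \le G(z,w) \le \frac{C}{|z-w|^{N-2}}, \qquad z,w \in \Omega,\; z\neq w,
\end{equation*}
which is uniform in $z,w$ since $\mu>0$ and $\partial\Omega$ is smooth. The Green's representation $\tilde u(z) = \int_\Omega G(z,w)\,\varepsilon^{-2} f(w/\varepsilon)\, dw$ then yields
\begin{equation*}
|\tilde u(z)| \;\le\; C\varepsilon^{-2} \int_\Omega \frac{|f(w/\varepsilon)|}{|z-w|^{N-2}} \, dw.
\end{equation*}

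Finally I would change variables $w = \varepsilon y$ and $z = \varepsilon x$. Since $dw = \varepsilon^N dy$ and $|z-w| = \varepsilon|x-y|$, the three factors of $\varepsilon$ combine as $\varepsilon^{-2}\cdot\varepsilon^{N}\cdot \varepsilon^{-(N-2)} = 1$, producing exactly the claimed inequality
\begin{equation*}
|u(x)| = |\tilde u(\varepsilon x)| \;\le\; C\int_{\Omega_\varepsilon}\frac{|f(y)|}{|x-y|^{N-2}}\,dy.
\end{equation*}

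The genuinely non-trivial ingredient, and where I expect the main obstacle to lie, is the uniform upper bound $G(z,w) \le C|z-w|^{-(N-2)}$ for the Neumann Green's function. I would establish this by splitting $G(z,w) = \Gamma_\mu(z-w) + H(z,w)$ where $\Gamma_\mu$ is the fundamental solution of $-\Delta+\mu I$ on $\R^N$ (which satisfies the desired pointwise bound) and the corrector $H$ is controlled near the boundary by a local reflection across $\partial\Omega$ (possible since $\partial\Omega$ is $C^2$), while far from the diagonal $H$ is smooth and bounded by elliptic regularity. An equivalent route is to integrate the Gaussian upper bound on the Neumann heat kernel $p_t^N(z,w) \le C t^{-N/2}\exp(-c|z-w|^2/t)$ against $e^{-\mu t}$ in $t\in(0,\infty)$; the $\mu>0$ factor guarantees convergence and produces the $|z-w|^{-(N-2)}$ singularity together with the uniform constant.
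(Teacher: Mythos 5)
Your argument is correct, and in fact the paper offers no proof of this lemma at all: it is stated as a quoted result from Rey--Wei (the reference cited as ``Part II: $N\geq 4$''), so there is no internal argument to compare against. Your route --- undoing the scaling to pose the problem for $-\Delta+\mu I$ on the fixed domain $\Omega$, invoking the Neumann Green's function bound $0\le G(z,w)\le C|z-w|^{-(N-2)}$, and scaling back --- is the standard proof and is essentially what underlies the cited result. The bookkeeping is right: $\varepsilon^{-2}\cdot\varepsilon^{N}\cdot\varepsilon^{-(N-2)}=1$, and the constant is automatically independent of $\varepsilon$ (equivalently of $k$) because after rescaling the Green's function is that of a fixed operator on a fixed domain; unique solvability and the representation formula are available since $\mu\varepsilon^{2}>0$. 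One small caveat on your heat-kernel route: the Gaussian upper bound $p_t(z,w)\le Ct^{-N/2}e^{-c|z-w|^2/t}$ for the Neumann heat kernel on a bounded domain is uniform only on bounded time intervals (as $t\to\infty$ the kernel tends to $1/|\Omega|$, not to $0$), so you should split the time integral: use the Gaussian bound for $t\le 1$ to produce the $|z-w|^{-(N-2)}$ singularity, and for $t\ge 1$ use mere boundedness of the kernel together with $\int_1^\infty e^{-\mu t}\,dt<\infty$, which is exactly where $\mu>0$ is needed. The fundamental-solution-plus-corrector route with a boundary reflection works as you describe; note also that only the upper bound on $G$ is used, so positivity, while true, is not essential.
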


Additionally, it is worth noting that $\sum\limits_{j=2}^{k} \frac{1}{|x_1-x_j|^{\alpha}}$ emerges frequently in various estimations. To provide a comprehensive overview of the calculation procedure, we summarize it in the following lemma.

\begin{lemma}\label{LemmaA-4}
It follows that
\[
\begin{split}
&\sum\limits_{j=2}^k \frac{1}{|x_j-x_1|^{\alpha}} = O\left( \varepsilon^\alpha k^\alpha  \right) \;\;\; \hbox{if } \alpha > 1, \\
&\sum\limits_{j=2}^k \frac{1}{|x_j-x_1|^{\alpha}} = O\left( \varepsilon^\alpha k^\alpha \ln k \right) \;\;\; \hbox{if } \alpha = 1, \\
&\sum\limits_{j=2}^k \frac{1}{|x_j-x_1|^{\alpha}} = O\left( \varepsilon^\alpha k  \right) \;\;\; \hbox{if } 0< \alpha < 1.
\end{split}
\]
\end{lemma}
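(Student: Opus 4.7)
The plan is straightforward: exploit the explicit geometry of the configuration $\{x_j\}$ to reduce the sum to a one-dimensional sum of reciprocal powers of integers, then apply standard asymptotics in each of the three regimes of $\alpha$.

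First, I would use the explicit formula for the $x_j$'s to compute $|x_1 - x_j|$ directly. Since each $x_j$ lies on the circle of radius $1/\varepsilon$ at angle $\theta_j = 2(j-1)\pi/k$ (with zero $\mathbb{R}^{N-2}$-component), one has
\[
|x_1 - x_j|^2 = \frac{1}{\varepsilon^2}\bigl(2 - 2\cos\theta_j\bigr) = \frac{4}{\varepsilon^2}\sin^2\!\left(\frac{(j-1)\pi}{k}\right),
\]
hence $|x_1 - x_j| = \frac{2}{\varepsilon}\sin\bigl((j-1)\pi/k\bigr)$ for $j = 2,\ldots,k$. Using the elementary estimate $\frac{2\theta}{\pi}\leq \sin\theta \leq \theta$ on $[0,\pi/2]$ together with the symmetry $\sin\bigl((j-1)\pi/k\bigr) = \sin\bigl((k-j+1)\pi/k\bigr)$, I get two-sided comparisons
\[
\frac{C_1 (j-1)}{\varepsilon k}\;\leq\; |x_1-x_j| \;\leq\; \frac{C_2 (j-1)}{\varepsilon k}\qquad\text{for } 2\leq j\leq \lfloor k/2\rfloor + 1,
\]
and the mirrored bound for the remaining indices.

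Next, combining these two halves via symmetry, the target sum becomes (up to positive constants)
\[
\sum_{j=2}^{k} \frac{1}{|x_1-x_j|^\alpha} \;\asymp\; (\varepsilon k)^\alpha \sum_{\ell=1}^{\lfloor k/2\rfloor} \frac{1}{\ell^\alpha}.
\]
Now the three announced estimates follow from the standard asymptotics of the generalized harmonic sum: if $\alpha > 1$ the series $\sum \ell^{-\alpha}$ is bounded uniformly in $k$, giving $O((\varepsilon k)^\alpha) = O(\varepsilon^\alpha k^\alpha)$; if $\alpha = 1$, the partial sum is $O(\ln k)$, yielding $O(\varepsilon k \ln k)$; and if $0 < \alpha < 1$, comparison with the integral gives $\sum_{\ell=1}^{k/2}\ell^{-\alpha} = O(k^{1-\alpha})$, producing $O((\varepsilon k)^\alpha \cdot k^{1-\alpha}) = O(\varepsilon^\alpha k)$.

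No serious obstacle appears here; this is essentially a calibration lemma. The only point that requires a little care is handling the two halves $j\leq k/2+1$ and $j\geq k/2+1$ uniformly so that all constants appearing in the chord-length comparison are independent of $k$, and then splicing those two halves back together before invoking the comparison with $\sum \ell^{-\alpha}$. Once that is done, the three cases are immediate from the integral test.
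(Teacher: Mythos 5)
Your proof is correct and follows essentially the same route as the paper: computing the chord length $|x_1-x_j|=\frac{2}{\varepsilon}\sin\frac{(j-1)\pi}{k}$, bounding $\sin\theta$ between $\frac{2\theta}{\pi}$ and $\theta$, folding the sum by the symmetry of sine about $\pi/2$, and then invoking standard asymptotics of $\sum_{\ell\leq k/2}\ell^{-\alpha}$ via comparison with the corresponding integral. The paper separates the even and odd $k$ cases explicitly when folding the sum, but this is a cosmetic difference.
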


\begin{proof}
Since $x_j=\left( \frac{1}{\varepsilon} \cos\frac{2(j-1)\pi}{k}, \frac{1}{\varepsilon} \sin\frac{2(j-1)\pi}{k} ,0 \right)$, we have
$$
|x_j-x_1| = \frac{2}{\varepsilon} \sin \frac{(j-1)\pi}{k}.
$$

If $k$ is even, we have
$$
\sum\limits_{j=2}^k \frac{1}{|x_j-x_1|^{\alpha}} = 2\left( \frac{\varepsilon}{2}\right)^\alpha \sum\limits_{j=2}^{\frac{k}{2}} \frac{1}{\left( \sin \frac{(j-1)\pi}{k}   \right)^\alpha} + \left(\frac{\varepsilon}{2}\right)^\alpha.
$$

If $k$ is odd, we have
$$
\sum\limits_{j=2}^k \frac{1}{|x_j-x_1|^{\alpha}} = 2\left( \frac{\varepsilon}{2}\right)^\alpha \sum\limits_{j=2}^{\frac{k+1}{2}} \frac{1}{\left( \sin \frac{(j-1)\pi}{k}   \right)^\alpha}.
$$

Note that for $j=2,\cdots,\frac{k+1}{2}$, we have
$$
\frac{2}{\pi} \leq \frac{ \sin \frac{(j-1)\pi}{k}  }{\frac{(j-1)\pi}{k}} \leq 1.
$$

Thus, it follows that
$$
\sum\limits_{j=2}^k \frac{1}{|x_j-x_1|^{\alpha}} = O\left(  \varepsilon^\alpha \sum\limits_{j=2}^{\frac{k+1}{2}} \frac{k^\alpha}{(j-1)^\alpha}    \right)
= O\left( (\varepsilon k)^\alpha \int_1^{\frac{k-1}{2}} \frac{1}{t^\alpha} dt  \right).
$$

Then it follows by direct calculation that
\[
\begin{split}
&\sum\limits_{j=2}^k \frac{1}{|x_j-x_1|^{\alpha}} = O\left( \varepsilon^\alpha k^\alpha  \right) \;\;\; \hbox{if } \alpha > 1, \\
&\sum\limits_{j=2}^k \frac{1}{|x_j-x_1|^{\alpha}} = O\left( \varepsilon^\alpha k^\alpha \ln k \right) \;\;\; \hbox{if } \alpha = 1, \\
&\sum\limits_{j=2}^k \frac{1}{|x_j-x_1|^{\alpha}} = O\left( \varepsilon^\alpha k  \right) \;\;\; \hbox{if } 0< \alpha < 1.
\end{split}
\]

\end{proof}

At the end of this section, we present three crucial lemmas that are cited from the references \cite{ref5}, \cite{ref3} and \cite{Kim}, which play a vital role in our proof.

\begin{lemma} \label{L1}
\cite{ref3} Assume that $p\leq \frac{N+2}{N-2}.$ There exist some positive constants $a=a_{N,p}$ and $b=b_{N,p}$ depending only on $N$ and $p$ such that
$$ \lim\limits_{r\to \infty} r^{N-2} V_{0,1}(r) =b ;$$
while
\begin{equation}\label{7}
\begin{cases}
\lim\limits_{r\to\infty} r^{(N-2)p-2}U_{0,1}(r) =a,  \;\; &\hbox{if } p<\frac{N}{N-2},\\
\lim\limits_{r\to\infty} \frac{r^{N-2}}{\ln r}U_{0,1}(r) =a,  \;\; &\hbox{if } p=\frac{N}{N-2},\\
\lim\limits_{r\to\infty} r^{N-2}U_{0,1}(r) =a,  \;\; &\hbox{if } p>\frac{N}{N-2}.
\end{cases}
\end{equation}
Furthermore, in the last case, we have $b^p=a( (N-2)p-2  )(N-(N-2)p).$
\end{lemma}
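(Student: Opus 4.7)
The plan is to exploit the radial symmetry (guaranteed by \cite{ref2}) to reduce the system to a pair of coupled ODEs, then iterate the integral representation to pin down the asymptotics. Writing $U=U_{0,1}(r)$ and $V=V_{0,1}(r)$, the system becomes
\begin{equation*}
-(r^{N-1}U')'=r^{N-1}V^{p},\qquad -(r^{N-1}V')'=r^{N-1}U^{q},
\end{equation*}
and after two integrations (using that the ground state vanishes at infinity),
\begin{equation*}
U(r)=\int_{r}^{\infty}\!\!\frac{dt}{t^{N-1}}\int_{0}^{t}s^{N-1}V(s)^{p}\,ds,\qquad V(r)=\int_{r}^{\infty}\!\!\frac{dt}{t^{N-1}}\int_{0}^{t}s^{N-1}U(s)^{q}\,ds.
\end{equation*}
The first step is a preliminary decay lemma: since $(U,V)\in \dot W^{2,(p+1)/p}\times \dot W^{2,(q+1)/q}$ with $p+1,q+1$ lying on the critical hyperbola, a standard bootstrap using the Hardy--Littlewood--Sobolev inequality (or Moser iteration applied to the integral system) shows that $U,V$ are bounded and tend to zero at infinity, with $U(r)\lesssim r^{-\alpha}$, $V(r)\lesssim r^{-\beta}$ for some $\alpha,\beta>0$ large enough that the total mass $\int_{0}^{\infty}s^{N-1}U^{q}\,ds$ is finite. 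This is the main technical obstacle, because obtaining \emph{any} polynomial decay from the integral representation requires care near $r=0$ (integrable) and at infinity (where one first gets only pointwise boundedness from $U,V\in L^{p+1}\cap L^{q+1}$ combined with the integral Harnack).

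Once $\int_{0}^{\infty}s^{N-1}U^{q}\,ds<\infty$ is established, the $V$-asymptotics follow immediately from the representation: as $r\to\infty$,
\begin{equation*}
V(r)=\frac{1}{(N-2)r^{N-2}}\int_{0}^{\infty}s^{N-1}U(s)^{q}\,ds\;+\;o\!\left(r^{-(N-2)}\right),
\end{equation*}
so $r^{N-2}V(r)\to b:=\tfrac{1}{N-2}\int_{0}^{\infty}s^{N-1}U^{q}\,ds>0$. The condition $q\ge \tfrac{N+2}{N-2}$ (equivalent to $(N-2)q\ge N$ and in fact strict except at the symmetric exponent) is what makes the integral converge, since substituting the bound $U(s)\lesssim 1$ near $0$ and the decay at infinity produced in Step~1 gives integrability.

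Next I would feed this $V$-asymptotic back into the formula for $U$ and split into the three cases according to whether $\int_{0}^{\infty}s^{N-1}V(s)^{p}\,ds$ converges. Using $V(s)\sim b\,s^{-(N-2)}$, the integrand behaves like $s^{N-1-(N-2)p}$ for large $s$. In the case $p>\tfrac{N}{N-2}$ the integral converges, and the same argument as for $V$ yields $r^{N-2}U(r)\to a=\tfrac{1}{N-2}\int_{0}^{\infty}s^{N-1}V^{p}\,ds$. In the borderline case $p=\tfrac{N}{N-2}$, the inner integral grows like $b^{p}\ln r$, giving $U(r)\sim \tfrac{b^{p}}{N-2}\,r^{-(N-2)}\ln r$. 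In the case $p<\tfrac{N}{N-2}$, the inner integral grows like $\tfrac{b^{p}}{N-(N-2)p}r^{N-(N-2)p}$, and a second integration produces
\begin{equation*}
U(r)\sim \frac{b^{p}}{\bigl((N-2)p-2\bigr)\bigl(N-(N-2)p\bigr)}\,r^{-((N-2)p-2)},
\end{equation*}
which is precisely $a\,r^{-((N-2)p-2)}$ with $b^{p}=a\bigl((N-2)p-2\bigr)\bigl(N-(N-2)p\bigr)$, proving the final identity.

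The hard part is purely the first step, namely the a priori decay bounds that legitimize passing to the limit under the integral sign; everything afterwards is a careful but mechanical dominated-convergence / Laplace-type analysis of the two nested integrals. In practice one would either cite the moving-plane/Kelvin-transform arguments of \cite{ref2,ref3} to assert $U,V\in L^{\infty}(\R^{N})$ with some rough polynomial decay, or run a direct ODE comparison using the monotonicity of $U,V$ (also proved there) against sub- and super-solutions of the form $C r^{-\sigma}$.
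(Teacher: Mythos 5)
The paper offers no proof of this lemma; it is quoted directly from Hulshof--van der Vorst \cite{ref3}. Your outline reproduces the integral-representation approach of that reference, and the asymptotic analysis of the two nested integrals you carry out is correct. The genuine burden lies, as you say, in Step~1: establishing a priori polynomial decay of the radial ground state $(U,V)$ so that the iterated integrals converge and one may pass to the limit. That step is precisely what occupies most of \cite{ref3} and cannot be by-passed; invoking it is legitimate here since the lemma is being quoted rather than re-proved.

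Two points are worth tightening. First, the identity $b^{p}=a\left((N-2)p-2\right)\left(N-(N-2)p\right)$, which you derive in the regime $p<\frac{N}{N-2}$, is ascribed in the statement to the ``last'' case $p>\frac{N}{N-2}$, where $N-(N-2)p<0$ so the right-hand side would be negative while $a,b>0$; that is a slip in the statement, and your computation lands in the correct regime, matching what \cite{ref3} actually proves. Second, the convergence of $\int_{0}^{\infty}s^{N-1}U(s)^{q}\,ds$ is not an automatic consequence of $q\geq \frac{N+2}{N-2}$ alone: in the slow-decay regime $p<\frac{N}{N-2}$ one has only $U(s)\sim a\,s^{-((N-2)p-2)}$, and integrability of the tail requires $\left((N-2)p-2\right)q>N$. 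This does hold, but it comes from the critical hyperbola, which yields the exact identity $\left((N-2)p-2\right)q=N+2+2p$; making this explicit (it is also what renders the admissible interval for $\kappa_{1}$ in the paper's Lemma~\ref{L2} non-empty) would close the apparent circularity between the decay rate of $U$ and the convergence of the integral defining $b$.
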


\begin{lemma} \label{L2}
   \cite{Kim}Let $(U'_{0,1}(r), V'_{0,1}(r))$ denotes the derivative of $(U_{0,1}(r), V_{0,1}(r))$ with respect to $r$, then there exsits a $C>0$ depending only on $N$ and $p$ such that for $r\geq 1$, the following holds
   \begin{equation}
     |V_{0,1}(r) - \dfrac{b_{N,p}}{r^{N-2}}| \leq \dfrac{C}{r^{N}}, \;\;\;  |V'_{0,1}(r) + \dfrac{(N-2)b_{N,p}}{r^{N-1}}| \leq \dfrac{C}{r^{N+1}}.
   \end{equation}
  Besides, if $ p \in (\frac{N}{N-2}, \frac{N+2}{N-2}] $, then
  \begin{equation}
     |U_{0,1}(r) - \dfrac{a_{N,p}}{r^{N-2}}| \leq \dfrac{C}{r^{N-2+\kappa_{0}}}, \;\;\;  |U'_{0,1}(r) + \dfrac{(N-2)a_{N,p}}{r^{N-1}}| \leq \dfrac{C}{r^{N-1 + \kappa_{0}}}.
  \end{equation}
  where $\kappa_0 = (N-2)p - N > 0$. If $p=\frac{N}{N-2}$, then
  \begin{equation}
     |U_{0,1}(r) - \dfrac{a_{N,p}ln r}{r^{N-2}}| \leq \dfrac{C}{r^{N-2}}, \;\;\;  |U'_{0,1}(r) + \dfrac{(N-2)a_{N,p}ln r}{r^{N-1}}| \leq \dfrac{C}{r^{N-1}}.
  \end{equation}
  If $p \in (\frac{2}{N-2}, \frac{N}{N-2})$, then
  \begin{equation}
     |U_{0,1}(r) - \dfrac{a_{N,p}}{r^{(N-2)p-2}}| \leq \dfrac{C}{r^{(N-2)p-2 + \kappa_1}}, \;\;\;  |U'_{0,1}(r) + \dfrac{((N-2)p-2)a_{N,p}}{r^{(N-2)p-1}}| \leq \dfrac{C}{r^{(N-2)p-1 +\kappa_1}}.
  \end{equation}
  where $\kappa_1 =\in (0,min\{N-(N-2)p, ((N-2)p-2)q_0 - N \})$
  \end{lemma}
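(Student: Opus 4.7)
The plan is to exploit the integral representations of the radial ground state pair $(U_{0,1}, V_{0,1})$ of system \eqref{4} via the Newtonian potential of $-\Delta$ on $\R^N$, namely
\[
U_{0,1}(x) = c_N \int_{\R^N} \frac{V_{0,1}^{p}(y)}{|x-y|^{N-2}}\, dy, \qquad V_{0,1}(x) = c_N \int_{\R^N} \frac{U_{0,1}^{q}(y)}{|x-y|^{N-2}}\, dy,
\]
combined with a bootstrap argument that starts from the crude decay furnished by Lemma \ref{L1} and refines it to sharp asymptotics. The main technical device is the Taylor expansion
\[
\frac{1}{|x-y|^{N-2}} = \frac{1}{|x|^{N-2}} + \frac{(N-2)\, x\cdot y}{|x|^{N}} + O\!\left(\frac{|y|^{2}}{|x|^{N}}\right), \quad |y| \leq |x|/2,
\]
applied after splitting $\R^N = \{|y| \leq |x|/2\} \cup \{|y| > |x|/2\}$, together with the radial symmetry of $(U_{0,1},V_{0,1})$.

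First I would establish the sharp asymptotics for $V_{0,1}$. Using the critical hyperbola identity $(N-2)pq = 2(p+q)+N+2$, one checks that in every admissible regime $U_{0,1}^{q}(y)$ decays strictly faster than $|y|^{-(N+2)}$, so both $\int U_{0,1}^{q}$ and $\int |y|^{2} U_{0,1}^{q}$ are finite. Plugging the Taylor expansion into the integral representation for $V_{0,1}$, and observing that the linear term vanishes by odd symmetry because $U_{0,1}$ is radial, one obtains
\[
V_{0,1}(r) = \frac{b_{N,p}}{r^{N-2}} + O\!\left(\frac{1}{r^{N}}\right), \qquad b_{N,p} = c_N \int_{\R^N} U_{0,1}^{q}.
\]
The matching estimate on $V'_{0,1}$ follows by differentiating under the integral sign, or equivalently from the radial ODE $(r^{N-1} V'_{0,1})' = -r^{N-1} U_{0,1}^{q}$.

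The asymptotics of $U_{0,1}$ are more delicate because the decay of $V_{0,1}^{p}$ is borderline, depending on whether $p$ exceeds, equals, or lies below $N/(N-2)$. When $p > N/(N-2)$, the integral $\int V_{0,1}^{p}$ converges, and the same Taylor argument delivers $U_{0,1}(r) = a_{N,p}/r^{N-2} + O(r^{-(N-2+\kappa_0)})$ with $\kappa_0 = (N-2)p - N$; the remainder is controlled by the far-field bound $V_{0,1}^{p}(y) \leq C|y|^{-(N-2)p}$. When $p = N/(N-2)$, the partial integral $\int_{|y| \leq R} V_{0,1}^{p}$ diverges logarithmically in $R$, producing the $\log r$ factor. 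When $2/(N-2) < p < N/(N-2)$, one splits the integral at $|y| = |x|$: on $|y| \leq |x|$ the integrand contributes at order $r^{-((N-2)p-2)}$, while on $|y| \geq |x|$ one uses the identity $\int_{\Sn^{N-1}} |x-R\omega|^{-(N-2)}\, d\sigma(\omega) = R^{-(N-2)}$ valid for $R \geq |x|$ to extract the constant $a_{N,p}$ and a remainder of order $r^{-((N-2)p-2)-\kappa_1}$, the exponent $\kappa_1$ being chosen small enough to absorb both the near-origin contribution (where the pointwise asymptotic for $V_{0,1}$ is not yet valid) and the cross-term produced by the subleading $O(r^{-N})$ correction to $V_{0,1}$.

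The main obstacle is pinning down the sharp remainder exponents in the subcritical regime $p < N/(N-2)$, where the integrand $V_{0,1}^{p}|x-y|^{-(N-2)}$ is not globally integrable and one must combine the near-field and far-field analyses carefully, tracking how the subleading term of $V_{0,1}$ propagates through the $p$-th power; the critical hyperbola identity is used repeatedly to verify that all auxiliary integrals converge. Once $U_{0,1}$ itself is understood, the derivative estimates follow either by direct differentiation of the integral representations with the same splittings, or from the radial ODE $U'_{0,1}(r) = -r^{1-N} \int_{0}^{r} s^{N-1} V_{0,1}^{p}(s)\, ds$, whose leading behavior can be read off from the leading behavior of $V_{0,1}^{p}$ in each of the three regimes.
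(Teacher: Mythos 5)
This lemma is not proved in the paper at all: it is quoted verbatim from \cite{Kim} (the leading-order asymptotics being already in \cite{ref3}), so there is no internal argument to measure your proposal against; what can be judged is whether your sketch would reconstruct the cited result. Your route --- writing $(U_{0,1},V_{0,1})$ as Newtonian potentials of $V_{0,1}^{p}$ and $U_{0,1}^{q}$ (after a short Liouville argument, which you assert rather than prove, to exclude a harmonic correction), Taylor-expanding the kernel, killing the linear term by radial symmetry, and using Newton's theorem for the radial splitting when $p<\frac{N}{N-2}$ --- is indeed the standard way such expansions are derived, and your use of the hyperbola identity $(N-2)pq=2(p+q)+N+2$ is sound: it gives $q\,[(N-2)p-2]=N+2+2p$, which justifies the integrability of $U_{0,1}^{q}$ and of its second moment whenever $p<\frac{N+2}{N-2}$.

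Two points, however, need repair. First, at the corner $p=q=\frac{N+2}{N-2}$, which the lemma (and the paper's condition (A)) includes, your blanket claim that $U_{0,1}^{q}$ decays strictly faster than $|y|^{-(N+2)}$ fails: there $U_{0,1}^{q}\sim |y|^{-(N+2)}$ exactly, the second moment diverges logarithmically, and the naive Taylor remainder only yields $O(r^{-N}\log r)$ for $V_{0,1}$ (and likewise for $U_{0,1}$ with $\kappa_0=2$); this case must be treated separately, e.g.\ via the explicit Aubin--Talenti bubble to which the ground state reduces when $p=q$, or via an extra bootstrap step. Second, in the regime $p<\frac{N}{N-2}$ a single pass cannot reach every $\kappa_1<\min\{N-(N-2)p,\,((N-2)p-2)q-N\}$: feeding $|V_{0,1}-b_{N,p}r^{2-N}|\le Cr^{-N}$ through the $p$-th power improves the remainder for $U_{0,1}$ only by a relative factor $r^{-2}$, i.e.\ caps $\kappa_1$ at $2$, whereas $\min\{N-(N-2)p,\,2+2p\}>2$ when $p<1$, a case the lemma covers. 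One genuinely needs the second bootstrap you only gesture at: insert the sharp subcritical asymptotics of $U_{0,1}$ back into the representation of $V_{0,1}$ to upgrade its error to $O(r^{-(N+2p)})$, and only then rerun the expansion of $U_{0,1}$. With these repairs (the second being irrelevant to the range of $p$ actually used in this paper, since condition (A) forces $p>\frac{N}{N-2}$), your sketch does deliver the lemma.
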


\begin{lemma}\label{L3}
   \cite{ref5} Set
   $$(\Psi_{0,1}^0,\Phi_{0,1}^0) = \left(  y \cdot \nabla U_{0,1} +\frac{NU_{0,1}}{q+1},\; y \cdot \nabla V_{0,1}+\frac{NV_{0,1}}{p+1}  \right)$$
   and
   $$ (\Psi_{0,1}^l, \Phi_{0,1}^l) = (\partial_l U_{0,1}, \partial_l V_{0,1} ),\;\; \hbox{for } l=1,\cdots,N. $$
   Then the space of solutions to the linear system
   \begin{equation}\label{8}
   \begin{cases}
   -\Delta \Psi =pV_{0,1}^{p-1} \Phi,\;\;\; \hbox{in } \mathbb R^N,\\
   -\Delta \Phi =qU_{0,1}^{q-1} \Psi,\;\;\; \hbox{in } \mathbb R^N,\\
   (\Psi,\Phi)\in \dot{W}^{2,\frac{p+1}{p}}(\mathbb R^N) \times \dot{W}^{2,\frac{q+1}{q}}(\mathbb R^N),
   \end{cases}
   \end{equation}
   is spanned by
   $$ \left\{  (\Psi_{0,1}^0,\Phi_{0,1}^0), (\Psi_{0,1}^1,\Phi_{0,1}^1)  ,\cdots, (\Psi_{0,1}^N,\Phi_{0,1}^N)    \right\} .$$
   \end{lemma}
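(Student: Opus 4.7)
\medskip

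\noindent\textbf{Proof proposal for Lemma \ref{L3}.}
The plan is to first check that the $N+1$ listed pairs genuinely lie in the kernel of the linearization, and then show that any solution in $\dot W^{2,(p+1)/p}\times \dot W^{2,(q+1)/q}$ must be a linear combination of them. The inclusion step is the easy half: differentiating the ground state system
\begin{equation*}
-\Delta U_{0,1}=V_{0,1}^{p},\qquad -\Delta V_{0,1}=U_{0,1}^{q}
\end{equation*}
in each coordinate direction $y_{l}$ yields that $(\partial_{l}U_{0,1},\partial_{l}V_{0,1})$ solves the linearized system, so the $N$ translation pairs belong to the kernel. For the dilation pair, I would apply the scaling invariance $(U,V)\mapsto (\lambda^{N/(q+1)}U(\lambda\cdot),\lambda^{N/(p+1)}V(\lambda\cdot))$ of \eqref{4} and differentiate at $\lambda=1$; this produces $(\Psi_{0,1}^{0},\Phi_{0,1}^{0})$ and forces it to solve the linearization. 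The function-space membership of all these candidates follows from the sharp decay estimates collected in Lemma \ref{L1} and Lemma \ref{L2}.

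The nontrivial half is upper-bounding the kernel. My approach is a spherical-harmonic decomposition combined with an ODE analysis adapted to the Hamiltonian structure. Writing
\begin{equation*}
\Psi(y)=\sum_{k=0}^{\infty}\psi_{k}(r)\,Y_{k}(\theta),\qquad \Phi(y)=\sum_{k=0}^{\infty}\phi_{k}(r)\,Y_{k}(\theta),
\end{equation*}
with $\{Y_{k}\}$ an orthonormal basis of spherical harmonics of degree $k$, the radial symmetry of $U_{0,1},V_{0,1}$ decouples the linearized system into one coupled ODE system per $k$:
\begin{equation*}
-\psi_{k}''-\tfrac{N-1}{r}\psi_{k}'+\tfrac{k(k+N-2)}{r^{2}}\psi_{k}=pV_{0,1}^{p-1}\phi_{k},\qquad
-\phi_{k}''-\tfrac{N-1}{r}\phi_{k}'+\tfrac{k(k+N-2)}{r^{2}}\phi_{k}=qU_{0,1}^{q-1}\psi_{k}.
\end{equation*}
The integrability requirement coming from $\dot W^{2,(p+1)/p}\times\dot W^{2,(q+1)/q}$ translates into decay conditions at $r=0$ and $r=\infty$ for each mode, via the Sobolev embeddings \eqref{5}.

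For $k=0$, I would show directly, using the uniqueness of $(U_{0,1},V_{0,1})$ as a radial ground state (Lemma \ref{L1}) together with a Pohozaev / Emden--Fowler transform, that the one-dimensional radial kernel is spanned exactly by the dilation pair $(\Psi_{0,1}^{0},\Phi_{0,1}^{0})$; the asymptotic matching of admissible solutions to a two-parameter family and the admissibility of only one of those parameters is the standard mechanism. For $k=1$, the Hardy-type term $\tfrac{N-1}{r^{2}}$ appears precisely with the right coefficient so that $(\partial_{l}U_{0,1},\partial_{l}V_{0,1})$ is a solution; a Wronskian/phase-plane argument then shows the solution space for mode $1$ has dimension $N$ (one copy per spherical harmonic $Y_{1}$), which is filled by the translations. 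For $k\ge 2$, I would argue as in Frank--Kim--Pistoia--type analyses: multiplying the ODE system by a suitable combination of $\psi_{k},\phi_{k}$ and integrating, and using that the potential term $\tfrac{k(k+N-2)}{r^{2}}$ becomes too large to allow any nontrivial decaying solution, one concludes $\psi_{k}\equiv\phi_{k}\equiv 0$.

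The main obstacle I expect is the higher-mode step ($k\ge 2$): the Hamiltonian structure rules out a single-equation maximum principle argument, so I would rely on a quadratic-form identity obtained by testing one equation against $\phi_{k}$ and the other against $\psi_{k}$, combined with the sharp far-field behavior from Lemma \ref{L2} to rule out homoclinic solutions. The radial uniqueness input at mode $0$ (which ultimately rests on the uniqueness and non-degeneracy of the ground state as a minimizer of the associated Rayleigh quotient, cf.\ \cite{ref3,ref5}) is the other delicate ingredient; this is exactly where the condition $(p,q)$ lying on the critical hyperbola \eqref{cri} is used, since it guarantees the scaling invariance that produces the one radial kernel direction and no more.
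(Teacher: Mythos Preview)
The paper does not give its own proof of this lemma; it is quoted verbatim from \cite{ref5} (Frank--Kim--Pistoia), so there is nothing in the present manuscript to compare your argument against. Your outline---spherical-harmonic decomposition followed by an ODE analysis mode by mode, with the $k=0$ mode handled via scaling/uniqueness, $k=1$ via the translation directions, and $k\ge 2$ ruled out by a quadratic-form argument exploiting the size of the angular eigenvalue---is indeed the architecture of the proof in \cite{ref5}, so in that sense your plan matches the original source.

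That said, the $k\ge 2$ step is where the real work lies and your sketch is too vague there. The scalar trick of testing against $\psi_k,\phi_k$ and using that $k(k+N-2)$ is large does not close by itself in the Hamiltonian setting, because the cross terms $pV_{0,1}^{p-1}\phi_k\psi_k$ and $qU_{0,1}^{q-1}\psi_k\phi_k$ have no sign. In \cite{ref5} this is handled by comparing the mode-$k$ system to the mode-$1$ system (whose solution $(U_{0,1}',V_{0,1}')$ is signed) through a carefully chosen Wronskian-type quantity, together with the sharp asymptotics from what is here Lemma~\ref{L2}; a naive energy identity will not suffice. Likewise, the $k=0$ step requires more than ``uniqueness of the ground state'': one needs that the radial linearized operator has a one-dimensional kernel, which in \cite{ref5} is obtained from a shooting argument at $r=0$ and $r=\infty$ rather than from a Pohozaev identity. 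If you intend to reproduce the proof rather than cite it, those two ingredients need to be spelled out.
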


\section{Basic Estimate}

In this section, we present some results that are basic in the procedure of Lyapunov-Schmidt reduction. All of these results have been sourced from \cite{YanWei2010} and \cite{Neumann-Linli}.
\begin{lemma}\label{B1}
   For any $\alpha > 0$, we have
   \[
      \sum\limits_{j=1}^{k} \dfrac{1}{(1+|y-x_j|)^{\alpha}} \leq C\left( 1 + \sum\limits_{j=2}^{k}\dfrac{1}{|x_1 - x_j|^{\alpha}} \right).
   \]
   Here, the constant $C>0$ does not depend on $k$.
\end{lemma}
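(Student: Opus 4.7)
The plan is to exploit the rotational symmetry of the configuration $\{x_j\}$ together with a Voronoi-type partition argument. Since the $x_j$ are obtained from $x_1$ by successive rotations of angle $2\pi/k$ in the first two coordinates, the map $j\mapsto |x_i-x_j|$ is (up to relabeling) independent of the choice of base point $i$; in particular
\[
   \sum_{j\neq i}\frac{1}{|x_i-x_j|^\alpha} \;=\; \sum_{j\neq 1}\frac{1}{|x_1-x_j|^\alpha}
\]
for every $i\in\{1,\dots,k\}$. Thus it is harmless to fix a $y\in\Omega_\varepsilon$ and assume $x_1$ is a closest point of $\{x_j\}$ to $y$, i.e.\ $|y-x_1|=\min_j|y-x_j|$.

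The only term I treat separately is $j=1$: clearly $(1+|y-x_1|)^{-\alpha}\leq 1$. For $j\neq 1$ I would use the triangle inequality together with the minimality of $|y-x_1|$,
\[
   |x_1-x_j|\;\leq\;|x_1-y|+|y-x_j|\;\leq\;2|y-x_j|,
\]
so that $1+|y-x_j|\geq \tfrac12|x_1-x_j|$ and consequently
\[
   \frac{1}{(1+|y-x_j|)^\alpha}\;\leq\;\frac{2^\alpha}{|x_1-x_j|^\alpha}.
\]
Summing over $j$ gives
\[
   \sum_{j=1}^{k}\frac{1}{(1+|y-x_j|)^\alpha}\;\leq\;1+2^\alpha\sum_{j=2}^{k}\frac{1}{|x_1-x_j|^\alpha},
\]
which is the desired bound with $C=2^\alpha$.

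There is really no hard step here; the only thing one has to be slightly careful about is that $y$ ranges over all of $\Omega_\varepsilon$ and not just the Voronoi cell of $x_1$. The rotational symmetry argument handles this: for a generic $y$, pick the index $i$ minimizing $|y-x_i|$, run the above estimate to obtain the bound with $x_1$ replaced by $x_i$, and then invoke the identity $\sum_{j\neq i}|x_i-x_j|^{-\alpha}=\sum_{j\neq 1}|x_1-x_j|^{-\alpha}$ to return to the stated form. The constant $C=2^\alpha$ is independent of $k$, as required.
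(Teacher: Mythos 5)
Your argument is correct: choosing the index nearest to $y$, using the triangle inequality to get $1+|y-x_j|\geq \tfrac12|x_i-x_j|$, and then invoking the regular-polygon symmetry to replace $x_i$ by $x_1$ is exactly the standard proof of this estimate (the paper itself does not reprove it but cites Wei--Yan, where the same closest-point/sector argument is used). No gaps; the constant $C=2^{\alpha}$ is indeed independent of $k$.
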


\begin{lemma}\label{B2}
   Suppose $\alpha > 1$ and $\beta > 1$ and $i \neq j$. Then, for any $\sigma \in [0, min (\alpha, \beta)]$, we have
   \[
      \dfrac{1}{(1+|y-x_i|)^{\alpha}}\dfrac{1}{(1+|y-x_j|)^{\beta}} \leq \dfrac{C}{|x_i - x_j|^{\sigma}} \left( \dfrac{1}{(1+|y-x_i|)^{\alpha+\beta-\sigma}} + \dfrac{1}{(1+|y-x_j|)^{\alpha+\beta-\sigma}} \right),
   \]
   where $C$ is positive constant.
\end{lemma}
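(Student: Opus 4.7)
The plan is to establish this pointwise inequality via a simple triangle-inequality dichotomy. By the symmetry of the right-hand side in the indices $i$ and $j$, I will reduce to the case $|y-x_i| \le |y-x_j|$; the opposite case then produces the other summand on the right by an identical argument.

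Under this assumption, the triangle inequality gives $|x_i - x_j| \le |y-x_i| + |y-x_j| \le 2|y-x_j|$, and since $\sigma \ge 0$ this yields the key bound $|x_i - x_j|^\sigma \le 2^\sigma (1+|y-x_j|)^\sigma$. Multiplying the desired inequality by $|x_i-x_j|^\sigma$ and substituting reduces the claim to
\[
\frac{1}{(1+|y-x_i|)^\alpha (1+|y-x_j|)^{\beta - \sigma}} \le \frac{C}{(1+|y-x_i|)^{\alpha + \beta - \sigma}}.
\]
Now $\sigma \le \min(\alpha,\beta) \le \beta$ implies $\beta - \sigma \ge 0$, and the assumption $|y-x_i| \le |y-x_j|$ gives $(1+|y-x_j|)^{\beta-\sigma} \ge (1+|y-x_i|)^{\beta-\sigma}$. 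Multiplying by $(1+|y-x_i|)^\alpha$ on both sides and taking reciprocals delivers the displayed estimate with $C = 2^\sigma$.

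There is no real obstacle to overcome here; the argument is essentially two lines. The only subtlety worth noting is that the $(1+|\cdot|)$ regularization causes no trouble when $|x_i-x_j|$ is small, because the nonnegativity of $\sigma$ makes the comparison $|x_i-x_j|^\sigma \le 2^\sigma(1+|y-x_j|)^\sigma$ valid uniformly. I also remark that the hypotheses $\alpha > 1$ and $\beta > 1$ are not actually used in the proof of the inequality itself; they are presumably stated because this is the range in which Lemma \ref{B2} is invoked in Section 2, where local integrability of the resulting kernels is needed.
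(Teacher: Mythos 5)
Your proof is correct, and it is the standard argument for this type of interaction estimate. Note that the paper itself does not prove Lemma \ref{B2}; it is cited from \cite{YanWei2010} and \cite{Neumann-Linli}, so there is no in-paper proof to compare against, but the argument you give is precisely the one used in those references.

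One small remark on the phrasing. The left-hand side is not symmetric under $i \leftrightarrow j$ (the exponents $\alpha$ and $\beta$ differ), so the reduction to the case $|y - x_i| \le |y - x_j|$ is not a genuine ``without loss of generality'' step justified by symmetry of the right-hand side. What actually happens is a dichotomy: when $|y - x_i| \le |y - x_j|$ you use $\sigma \le \beta$ to absorb $(1+|y-x_j|)^\sigma$ and arrive at the $x_i$-summand; when $|y - x_j| \le |y - x_i|$ you use $\sigma \le \alpha$ to absorb $(1+|y-x_i|)^\sigma$ and arrive at the $x_j$-summand. The two branches are structurally parallel but rely on the two different halves of the hypothesis $\sigma \le \min(\alpha,\beta)$, which is worth saying explicitly since it explains why the $\min$ appears in the statement. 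Your final observations are also correct: the constant can be taken as $2^\sigma$ (or uniformly $2^{\min(\alpha,\beta)}$), and $\alpha,\beta>1$ play no role in the pointwise bound itself but are needed for the summability over $j$ in the places where the lemma is invoked.
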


\begin{lemma}\label{B3}
   If $\sigma \in (0,N-2)$, we have
   \[
      \int_{\R^N} \dfrac{1}{|y-z|^{N-2}} \dfrac{1}{(1+|z|)^{2+\sigma}} dz \leq \dfrac{C}{(1+|y|)^{\sigma}}.
   \]
   If $\sigma > N-2$, we have
   \[
      \int_{\R^N} \dfrac{1}{|y-z|^{N-2}} \dfrac{1}{(1+|z|)^{2+\sigma}} dz \leq \dfrac{C}{(1+|y|)^{N-2}}.
   \]
\end{lemma}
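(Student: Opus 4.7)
The statement is a standard convolution/potential-theoretic estimate for a Riesz-type kernel against a polynomially decaying weight, and the proof I would give splits $\mathbb{R}^N$ into natural regions determined by the relative size of $|z|$, $|y-z|$, and $|y|$. The case $|y|\le 1$ is easy: the weight $(1+|z|)^{-(2+\sigma)}$ is in $L^{N/(N-2),\infty}$-type dual class, and a direct computation (splitting into $|z-y|\le 1$ and $|z-y|\ge 1$) shows the integral is uniformly bounded, hence the right-hand side, which is bounded below by a positive constant, controls it. So I will focus on the case $|y|\ge 1$, for which I set $r=|y|$ and decompose
\[
\mathbb{R}^N = D_1 \cup D_2 \cup D_3 \cup D_4,
\]
where $D_1=\{|z|\le r/2\}$, $D_2=\{|z-y|\le r/2\}$, $D_3=\{|z|\ge r/2,\ |z-y|\ge r/2,\ |z|\le 2r\}$, and $D_4=\{|z|\ge 2r\}$.

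On $D_1$ one has $|y-z|\ge r/2$, so the kernel factor pulls out and it remains to estimate $\int_{|z|\le r/2}(1+|z|)^{-(2+\sigma)}dz$; this integral is of order $r^{N-2-\sigma}$ when $\sigma<N-2$ and is uniformly bounded when $\sigma>N-2$, producing $Cr^{-\sigma}$ or $Cr^{-(N-2)}$ respectively. On $D_2$ one has $(1+|z|)\asymp r$, so the weight comes out as $r^{-(2+\sigma)}$, and $\int_{|z-y|\le r/2}|y-z|^{-(N-2)}dz\le C r^{2}$ gives the bound $Cr^{-\sigma}$, which dominates $Cr^{-(N-2)}$ when $\sigma>N-2$. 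On $D_3$ both $1+|z|\asymp r$ and $\int_{D_3}|y-z|^{-(N-2)}dz\le Cr^{2}$, yielding the same $Cr^{-\sigma}$ bound. Finally on $D_4$ one has $|y-z|\ge |z|/2$, so the integrand is dominated by $C|z|^{-(N+\sigma)}$, and since $\sigma>0$ the tail integral $\int_{|z|\ge 2r}|z|^{-(N+\sigma)}dz$ equals $Cr^{-\sigma}$, again $\le Cr^{-(N-2)}$ when $\sigma>N-2$.

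Summing the four contributions and observing that for $|y|\ge 1$ the factor $(1+|y|)^{-\alpha}$ is comparable to $|y|^{-\alpha}$, I obtain the claimed bound in both regimes: $Cr^{-\sigma}$ when $0<\sigma<N-2$ and $Cr^{-(N-2)}$ when $\sigma>N-2$. The main (and really the only) obstacle is book-keeping — making sure each region is handled with the correct bound and that the exponent dichotomy is accounted for; there is no essential analytic difficulty because the weight is integrable at infinity (thanks to $\sigma>0$) and the Riesz kernel has the locally integrable singularity $|y-z|^{-(N-2)}$. I would close the proof by remarking that the critical case $\sigma=N-2$, which is not needed by the paper, would produce an additional logarithmic factor.
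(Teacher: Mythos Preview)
Your proof is correct and takes essentially the same approach as the paper: a decomposition of $\mathbb{R}^N$ into regions according to the relative sizes of $|z|$, $|y-z|$, and $|y|$, followed by elementary estimation on each piece. The paper cites the case $\sigma\in(0,N-2)$ from \cite{YanWei2010} and only writes out the case $\sigma>N-2$ with a three-region split $B_d(0)\cup B_d(y)\cup(\mathbb{R}^N\setminus(B_d(0)\cup B_d(y)))$ for $d=|y|$; your four-region decomposition with radii $r/2$ is a cleaner variant of the same idea.
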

\begin{proof}
   The case for $\sigma \in (0, N-2)$ has already been proven in \cite{YanWei2010}. Therefore, we only need to establish the estimate for $\sigma > N-2$. In particular, we aim to prove the estimate for $|y| \geq 2$. Let $d = |y|$, and we have
   \[
     \begin{split}
       \int_{\R^N} \dfrac{1}{|y-z|^{N-2}} \dfrac{1}{(1+|z|)^{2+\sigma}} dz & = \left( \int_{B_d(0)} +  \int_{B_d(y)} + \int_{\R^N \backslash (B_d(0) \cup B_d(y))}  \right) \dfrac{1}{|y-z|^{N-2}} \dfrac{dz}{(1+|z|)^{2+\sigma}}  \\
     & \leq  \dfrac{1}{d^{N-2}}\int_{B_d(0)}  \dfrac{dz}{(1+|z|)^{2+\sigma}} + \dfrac{1}{|y|^{\sigma+2}} \int_{B_{d}(y)} \dfrac{dz}{|z-y|^{N-2}}  \\
     & +  \dfrac{1}{d^{N-2}} \left( \int_{\R^N \backslash B_d(y)} \dfrac{1}{|y-z|^{\sigma + 2}}  + \int_{\R^N \backslash B_d(0)} \dfrac{1}{(1+|z|)^{\sigma + 2}}  \right) \\
     & \leq \dfrac{C}{|y|^{N-2}} \leq C \dfrac{C}{|y|^{N-2}},
     \end{split}
   \]
   for any $|y| \geq 2$. Hence, we obtain
   \[
      \int_{\R^N} \dfrac{1}{|y-z|^{N-2}} \dfrac{1}{(1+|z|)^{2+\sigma}} dz \leq \dfrac{C}{(1+|y|)^{N-2}}.
   \]
\end{proof}

\begin{lemma}\label{first-term}
   Suppose $(PU, PV)=(\sum\limits_{j=1}^{k} PU_{\Lambda, x_j}, \sum\limits_{j=1}^{k} PV_{\Lambda, x_j})$ and $PU_{\Lambda, x_j}$, $PV_{\Lambda, x_j}$ are defined as the solution of (\ref{equ-3}), then for $\tau = \frac{N-3}{N-2}$ and $N\geq 5$, there is a small $\theta > 0$, such that
   \[
      \begin{split}
        & \int_{\Omega_{\epsilon}} \dfrac{1}{|y-z|^{N-2}} |(PV)^{p-1}(z)\omega_{2}(z)| dz \\
        & \leq C||w_2||_{*,2}\left( \sum\limits_{i=1}^{k} \dfrac{1}{(1+|y-x_i|)^{\frac{N}{q+1}+\tau+ \theta}} + o(1)\sum\limits_{i=1}^{k} \dfrac{1}{(1+|y-x_i|)^{\frac{N}{q+1}+\tau }} \right), \\
        & \int_{\Omega_{\epsilon}} \dfrac{1}{|y-z|^{N-2}} |(PU)^{q-1}(z)\omega_{1}(z)| dz \\
        & \leq C||w_2||_{*,2}\left( \sum\limits_{i=1}^{k} \dfrac{1}{(1+|y-x_i|)^{\frac{N}{p+1}+\tau+ \theta}} + o(1)\sum\limits_{i=1}^{k} \dfrac{1}{(1+|y-x_i|)^{\frac{N}{p+1}+\tau }} \right).
      \end{split}
   \]
\end{lemma}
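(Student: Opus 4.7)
The plan is to reduce the estimate to a convolution $\int |y-z|^{-(N-2)} g(z)\,dz$ with an explicit pointwise decay bound on $g$, and then invoke Lemma \ref{B3} to convert each centered decay of $g$ into the corresponding centered decay of the convolution. First I would bound the integrand pointwise using $0\leq PV_{\Lambda,x_j}\leq V_{\frac{1}{\Lambda},x_j}\leq C(1+|z-x_j|)^{-(N-2)}$ together with the definition of the weighted norm, $|\omega_2(z)|\leq \|\omega_2\|_{*,2}\sum_i(1+|z-x_i|)^{-\frac{N}{p+1}-\tau}$, to obtain
\[
   (PV)^{p-1}|\omega_2|(z)\leq C\|\omega_2\|_{*,2}\Big(\sum_{j=1}^{k}\frac{1}{(1+|z-x_j|)^{N-2}}\Big)^{p-1}\sum_{i=1}^{k}\frac{1}{(1+|z-x_i|)^{\frac{N}{p+1}+\tau}}.
\]
When $p-1\leq 1$ (the case $N\geq 6$) the outer power is subadditive and expands as $\sum_j(1+|z-x_j|)^{-(N-2)(p-1)}$; when $p-1>1$ (so $N=5$, $p\in(2,\tfrac{7}{3}]$) I would split it via $(\sum a_j)^{p-1}\leq C\sum a_j^{p-1}+C\sum_{j\neq j'}a_j^{p-2}a_{j'}$ and trade each cross product via Lemma \ref{B2} for a factor $|x_j-x_{j'}|^{-s}$ times a single-centered decay.

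Next I would separate the resulting double (or triple) sum into diagonal and off-diagonal contributions and bound each by Lemma \ref{B3}. The diagonal ($i=j$) term has decay exponent $(N-2)(p-1)+\tfrac{N}{p+1}+\tau$, which by the critical hyperbola identity $\tfrac{N}{p+1}+\tfrac{N}{q+1}=N-2$ equals $2+\tfrac{N}{q+1}+\tau+\theta_0$ with
\[
   \theta_0=(N-2)p-N+\tfrac{N}{p+1}-\tfrac{N}{q+1}.
\]
Condition (A) gives $(N-2)p\geq N+\tau$ for $N\geq 6$ and $(N-2)p>N$ for $N=5$, and since $p\leq q$ we have $\tfrac{N}{p+1}\geq \tfrac{N}{q+1}$, so $\theta_0>0$. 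Choosing $\theta\in(0,\min(\theta_0,N-2))$, Lemma \ref{B3} bounds the diagonal convolution by $C\|\omega_2\|_{*,2}\sum_i(1+|y-x_i|)^{-(N/(q+1)+\tau+\theta)}$, matching the first term on the right. For each off-diagonal pair $i\neq j$, Lemma \ref{B2} with separation exponent $s$ slightly bigger than $\tau$ converts the product to $|x_i-x_j|^{-s}$ times a single-centered decay; convolving via Lemma \ref{B3} and summing $\sum_{j\neq i}|x_i-x_j|^{-s}$ by Lemma \ref{LemmaA-4} yields a factor $(\varepsilon k)^{s-\tau}$. Since $\varepsilon=k^{-(N-2)/(N-3)}$ and $s>\tau$, this factor is $o(1)$ as $k\to\infty$, producing the $o(1)$-weighted second term.

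The main obstacle I expect is the case $N=5$, $p\in(2,\tfrac{7}{3}]$, where the superadditivity of $t\mapsto t^{p-1}$ generates extra cross terms of the form $V_{\frac{1}{\Lambda},x_j}^{p-2}V_{\frac{1}{\Lambda},x_{j'}}|\omega_2|$ and a triple sum in $(i,j,j')$ that must be resolved by repeated applications of Lemma \ref{B2} with carefully chosen separation exponents, so that every surviving piece is absorbed either into the $\theta$-improved diagonal bound or into the $o(1)$ off-diagonal bound. The second estimate, for $(PU)^{q-1}|\omega_1|$, is handled identically with $p,q$ and the weighted norms interchanged; the analogous diagonal gain $\theta_0'=(N-2)q-N+\tfrac{N}{q+1}-\tfrac{N}{p+1}$ is positive, which follows from $(N-2)q\geq N+2$ combined with $2N/(p+1)<N$ under condition (A) and the critical hyperbola identity.
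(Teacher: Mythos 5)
Your proposal is correct in substance and follows essentially the same route as the paper: bound $PV$ (resp.\ $PU$) pointwise by the bubble decay $(1+|z-x_j|)^{-(N-2)}$, insert the weighted norm of $\omega_2$ (resp.\ $\omega_1$), split the resulting sums into diagonal and off-diagonal parts, gain the exponent $\theta$ on the diagonal via Lemma \ref{B3} (your $\theta_0>0$ computation is exactly the paper's inequality $\min\{(N-2)(p-1)+\tfrac{N}{p+1}+\tau-2,\,N-2\}>\tfrac{N}{q+1}+\tau$), and obtain the $o(1)$ factor on the off-diagonal from Lemma \ref{B2}, Lemma \ref{B3} and Lemma \ref{LemmaA-4}. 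Two small caveats: the comparison $0\le PV_{\Lambda,x_j}\le V_{\frac{1}{\Lambda},x_j}$ is not available under the Neumann condition (the correction $\psi_{\Lambda,x_j}$ has no sign), so the decay of $PV_{\Lambda,x_j}$ should instead be derived, as in the paper, from the Green representation of \eqref{equ-3} via Lemma \ref{LemmaA-3} and Lemma \ref{B3}; and the off-diagonal gain is $\varepsilon^{s-\tau}$ (since $\varepsilon^{\tau}k=1$) rather than $(\varepsilon k)^{s-\tau}$, though both tend to zero, and $\theta$ must also respect the cap $\tfrac{N}{q+1}+\tau+\theta\le N-2$, i.e.\ $\theta<\tfrac{N}{p+1}-\tau$. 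Note finally that the cross-term expansion you reserve for $N=5$ is needed for the $(PU)^{q-1}$ estimate for every $N\ge 5$, since $q-1\ge 1$ there; your machinery covers this, and the paper likewise leaves that case to a ``similar argument.''
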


\begin{proof}
   Since $(PU_{\Lambda,x_i}, PV_{\Lambda,x_{i}})$ satisfies equation (\ref{equ-3}), we obtain by Lemma \ref{LemmaA-3} and Lemma \ref{B3} that
\[
   \begin{split}
 |PV_{\Lambda,x_{i}}|(z) &\leq C \int_{\Omega_{\epsilon}} \dfrac{1}{|z-y|^{N-2}} U_{\frac{1}{\Lambda},x_{i}}^q (y) dy \\
& \leq C \int_{\Omega_{\epsilon}} \dfrac{1}{|z-y|^{N-2}(1+|y-x_i|)^{(N-2)q}} \\
& \leq  \dfrac{C}{|z-x_i|^{N-2}},
   \end{split}
\]
Hence, it follows that
\[
   \begin{split}
      & \quad\int_{\Omega_{\epsilon}} \dfrac{1}{|y-z|^{N-2}} |(PV)^{p-1}(z)\omega_{2}(z)| dz \\
      & \leq ||\omega_{2}||_{*,2} \int_{\Omega_{\epsilon}} \dfrac{1}{|y-z|^{N-2}} |(PV)^{p-1}(z)| \sum\limits_{i=1}^{k}\dfrac{1}{(1+|z-x_i|)^{\frac{N}{p+1}+\tau}} dz \\
      & \leq C ||\omega_{2}||_{*,2} \int_{\R^N} \dfrac{1}{|y-z|^{N-2}} \sum\limits_{i=1}^{k}\dfrac{1}{(1+|z-x_i|)^{(N-2)(p-1)}} \sum\limits_{j=1}^{k}\dfrac{1}{(1+|z-x_j|)^{\frac{N}{p+1}+\tau}} dz\\
      & = C ||\omega_{2}||_{*,2} \sum\limits_{i=1}^{k} \int_{\R^N} \dfrac{1}{|y-z|^{N-2}} \dfrac{1}{(1+|z-x_i|)^{(N-2)(p-1)+\frac{N}{p+1}+\tau}}dz  \\
      & + C ||\omega_{2}||_{*,2} \sum\limits_{i=1}^{k} \sum\limits_{j\neq i} \int_{\R^N} \dfrac{1}{|y-z|^{N-2}} \dfrac{1}{(1+|z-x_i|)^{(N-2)(p-1)}}\dfrac{1}{(1+|z-x_j|)^{\frac{N}{p+1}+\tau}}dz ,
   \end{split}
\]
when $N\geq 6$. Using Lemma \ref{LemmaA-3}, we can deduce for small enough $\theta$ that
\[
   \begin{split}
      & \sum\limits_{i=1}^{k} \int_{\R^N} \dfrac{1}{|y-z|^{N-2}} \dfrac{1}{(1+|z-x_i|)^{(N-2)(p-1)+\frac{N}{p+1}+\tau}} \\
      & \leq \sum\limits_{i=1}^{k} \dfrac{1}{(1+|y-x_i|)^{\frac{N}{q+1}+\tau+ \theta}},
   \end{split}
\]
where the last inequality holds because
$$
min \{ (N-2)(p-1)+\frac{N}{p+1}+\tau -2 , N-2  \} >  \dfrac{N}{q+1} + \tau.
$$
On the other hand, by using Lemma \ref{B2} with $\alpha = (N-2)(p-1)$ and $\beta = \frac{N}{p+1} + \tau$,  we obtain
\[
  \begin{split}
   &\quad \int_{\R^N} \dfrac{1}{|y-z|^{N-2}} \dfrac{1}{(1+|z-x_i|)^{(N-2)(p-1)}}\dfrac{1}{(1+|z-x_j|)^{\frac{N}{p+1}+\tau}}dz \\
   & \leq C\dfrac{1}{|x_i-x_j|^{\tau + \theta}}\int_{\R^N} \dfrac{1}{|y-z|^{N-2}} \left( \dfrac{1}{(1+|z-x_i|)^{\alpha + \beta -\tau - \theta} } + \dfrac{1}{(1+|z-x_j|)^{\alpha + \beta -2 -\tau - \theta}} \right) dz ,
  \end{split}
\]
where $\theta > 0$ is samll enough. Applying Lemma \ref{LemmaA-4}, and considering the fact that $\alpha + \beta - \tau > \frac{N}{q+1} + 2 + \tau$, we conclude that:
\[
  \begin{split}
   & \quad\sum\limits_{i=1}^{k}\sum\limits_{j\neq i}\int_{\R^N} \dfrac{1}{|y-z|^{N-2}} \dfrac{1}{(1+|z-x_i|)^{(N-2)(p-1)}}\dfrac{1}{(1+|z-x_j|)^{\frac{N}{p+1}+\tau}}dz \\
   & \leq C \sum\limits_{i=1}^{k}\sum\limits_{j\neq i} \dfrac{1}{|x_i-x_j|^{\tau + \theta}} \int_{\R^N} \dfrac{1}{|y-z|^{N-2}}  \dfrac{1}{(1+|z-x_i|)^{\frac{N}{q+1} + 2 + \tau }} dz\\
   & \leq C \sum\limits_{i=1}^{k}\sum\limits_{j\neq i} \dfrac{1}{|x_i-x_j|^{\tau + \theta}} \dfrac{1}{(1+|y-x_i|)^{\frac{N}{q+1} + \tau}} \\
   & \leq C \sum\limits_{i=1}^{k} \dfrac{1}{(1+|y-x_i|)^{\frac{N}{q+1}+\tau}} \sum\limits_{j\neq i} \dfrac{1}{|x_i-x_j|^{\tau + \theta}}\\
   & = o(1) \sum\limits_{i=1}^{k} \dfrac{1}{(1+|y-x_i|)^{\frac{N}{q+1}+\tau}}.
  \end{split}
\]
Therefore, we have
\[
\begin{split}
   & \quad\int_{\Omega_{\epsilon}} \dfrac{1}{|y-z|^{N-2}} |(PV)^{p-1}(z)\omega_{2}(z)| dz \\
   & \leq C||w_2||_{*,2}\left( \sum\limits_{i=1}^{k} \dfrac{1}{(1+|y-x_i|)^{\frac{N}{q+1}+\tau+ \theta}} + o(1)\sum\limits_{i=1}^{k} \dfrac{1}{(1+|y-x_i|)^{\frac{N}{q+1}+\tau }} \right).
\end{split}
\]
The case for $N=5$ can be treated in a similar manner, and therefore, we omit it. Besides, similar argument gives the estimate for $PU$. That completes the proof.
\end{proof}

\section*{Acknowledgments}
The authors confirm that there are no relevant financial or non-financial competing interests to report.

\end{document}